\newcommand{\dom}{\mathrm{dom}}
\newcommand{\edim}{\mbox{\rm e-dim}}
\newcommand{\F}{\mathcal F}
\newcommand{\G}{\mathcal G}
\newcommand{\GGamma}{\bar\Gamma}
\newcommand{\pr}{\mathrm{pr}}
\newcommand{\IR}{\mathbb R}
\newcommand{\IZ}{\mathbb Z}
\newcommand{\IT}{\mathbb T}
\newcommand{\IC}{\mathbb C}
\newcommand{\IQ}{\mathbb Q}
\newcommand{\e}{\varepsilon}
\newcommand{\IN}{\mathbb N}
\newcommand{\A}{\mathcal A}
\newcommand{\C}{\mathcal C}
\newcommand{\w}{\omega}
\newcommand{\lrN}{\overset{\leftrightarrow}{\mathcal N}}
\newcommand{\rN}{\overset{\rightarrow}{\mathcal N}}
\newcommand{\diam}{\mathrm{diam}}
\newcommand{\DD}{\mathcal{D}}
\newcommand{\conv}{\mathrm{conv}}
\newcommand{\U}{\mathcal U}
\newcommand{\V}{\mathcal V}
\newcommand{\W}{\mathcal W}
\newcommand{\St}{\mathcal{S}t}
\newcommand{\Comp}{\mathsf{Comp}}
\newcommand{\CompEpi}{\mathsf{CompEpi}}
\newcommand{\ANE}{\mathsf{ANE}}
\newcommand{\setmap}{\multimap}
\newtheorem{theorem}{Theorem}[section]
\newtheorem{corollary}[theorem]{Corollary}
\newtheorem{problem}[theorem]{Problem}
\newtheorem{claim}[theorem]{Claim}
\newtheorem{lemma}[theorem]{Lemma}
\theoremstyle{definition}
\title{Dimension of graphoids of rational vector-functions}
\author{Taras Banakh and Oles Potyatynyk}
\address{Instytut Matematyki, Jan Kochanowski University, Kilece, Poland and \newline
Department of Mathematics, Ivan Franko National University of Lviv, Ukraine}
\email{tbanakh@yahoo.com}
\address{Department of Mathematics, Ivan Franko National University of Lviv, Ukraine}
\email{oles2008@gmail.com}
\keywords{Graphoid, graph, rational vector-function, topological dimension, extension dimension, cohomological dimension, Pontryagin surface}
\subjclass{54F45, 55M10; 14J80; 14P05; 26C15; 55M25; 54C50}
\begin{document}
\begin{abstract} Let $\F\subset\IR(x,y)$ be a countable family of rational functions of two variables with real coefficients. Each rational function $f\in\F$ can be thought as a continuous function $f:\dom(f)\to\bar\IR$ taking values in the projective line $\bar\IR=\IR\cup\{\infty\}$  and defined on a cofinite subset $\dom(f)$ of the torus $\bar\IR^2$. Then the family $\F$ determines a continuous vector-function $\F:\dom(\F)\to\bar\IR^\F$ defined on the dense $G_\delta$-set $\dom(\F)=\bigcap_{f\in\F}\dom(\F)$ of $\bar\IR^2$. The closure $\bar\Gamma(\F)$ of its graph $\Gamma(\F)=\{(x,f(x)):x\in\dom(\F)\}$ in $\bar\IR^2\times\bar\IR^\F$ is called the {\em graphoid} of the family $\F$. We prove the graphoid $\bar\Gamma(\F)$ has topological dimension $\dim(\bar\Gamma(\F))=2$. If the family $\F$ contains all linear fractional transformations $f(x,y)=\frac{x-a}{y-b}$ for $(a,b)\in\IQ^2$, then the graphoid $\bar\Gamma(\F)$ has cohomological dimension $\dim_G(\bar\Gamma(\F))=1$ for any non-trivial 2-divisible abelian group $G$. Hence the space $\bar\Gamma(\F)$ is a natural example of a compact space that is not dimensionally full-valued and by this property resembles the famous Pontryagin surface.
\end{abstract}
\maketitle

\section{Introduction}
Let $X,Y$ be topological spaces and $f:\dom(f)\to Y$ be a function defined on a subset $\dom(f)\subset X$. Such a function $f$ will be called a {\em partial  function on $X$}. The closure $\GGamma(f)$ of the graph
$$\Gamma(f)=\{(x,f(x)):x\in\dom(f)\}$$ of $f$ in the Cartesian product $X\times Y$ will be called the {\em graphoid} of $f$. The graphoid $\GGamma(f)$ determines a multi-valued function $\bar f:X\setmap  Y$ assigning to each point $x\in X$ the (possibly empty) subset $\bar f(x)=\{y\in Y:(x,y)\in\GGamma(f)\}$.
It is clear that $\GGamma(f)$ coincides with the graph $\Gamma(\bar f)=\{(x,y)\in X\times Y:y\in\bar f(x)\}$ of the multi-valued function $\bar f:X\setmap  Y$. Also it is clear that $f(x)\in\bar f(x)$ for each $x\in\dom(f)$. The multi-valued function $\bar f$ is called {\em the graphoid extension} of the partial function $f$. The set  $\dom(\bar f)=\{x\in X:\bar f(x)\ne\emptyset\}$ will be called the {\em domain} of $\bar f$. If the space $Y$ is compact, then the projection $\pr_X:\Gamma(\bar f)\to X$ is a perfect map \cite[3.7.1]{Eng}, which implies that the multi-valued map $\bar f$ is upper semi-continuous in the sense that for any open subset $U\subset Y$ the preimage $\bar f^{-1}_\subset=\{x\in X:\bar f(x)\subset U\}$ is open in $X$.

In this paper we shall study topological properties of the graphoids of rational vector-functions.
By a {\em rational function} of $k$ variables
we understand a partial function $f:\dom(f)\to\bar\IR$ of the form $$f(x_1,\dots,x_k)=\frac{p(x_1,\dots,x_k)}{q(x_1,\dots,x_k)}$$ where $p$ and $q$ are two relatively prime polynomials of $k$ variables. The rational function $f=\frac{p}{q}$ is defined on the open dense subset
$$\dom(f)=\IR^k\setminus (p^{-1}(0)\cap q^{-1}(0))$$of $\bar\IR^k$ and takes its values in the projective real  line $\bar\IR=\IR\cup\{\infty\}$ (carrying the topology of one-point compactification of the real line $\IR$).

By $\IR(x_1,\dots,x_k)$ we denote the field of rational functions of $k$ variables with coefficients in the field $\IR$ of real numbers. Each rational function $f\in\IR(x_1,\dots,x_k)$ will be thought as a partial function defined on the open dense subset $\dom(f)$ of the $k$-dimensional torus $\bar\IR^k$ with values in the projective line $\bar\IR$.

 By a {\em rational vector-function} we understand any family $\F\subset\IR(x_1,\dots,x_k)$ of rational functions.

If $\F$ is countable, then the intersection $\dom(\F)=\bigcap_{f\in \F}\dom(f)$ is a dense $G_\delta$-set in $\bar\IR^k$.
So, $\F$ can be thought as a partial function
$$\F:\dom(\F)\to \bar\IR^F,\;\F:x\mapsto (f(x))_{f\in \F}.$$
Its graphoid $\bar\Gamma(\F)$ is a closed subset of the compact Hausdorff space $\bar\IR^k\times\bar\IR^\F$ and its graphoid extension $\bar \F:\bar\IR^k\setmap \bar\IR^F$ is an upper semi-continuous multi-valued function with $\dom(\bar \F)=\bar\IR^k$. For every $f\in\F$ the composition $\pr_f\circ\bar \F:\bar\IR^k\to\bar\IR$ of $\bar \F$ with the projection $\pr_f:\bar\IR^\F\to\bar\IR$, $\pr_f:(x_g)_{g\in \F}\mapsto x_f$, coincides with the graphoid extension $\bar f$ of the rational function $f$.

For uncountable families $\F\subset\IR(x_1,\dots,x_k)$ this approach to defining $\bar \F:\bar\IR^k\to\bar\IR^\F$ does not work properly as $\dom(\F)=\bigcap_{f\in \F}\dom(f)$ can be empty. This problem can be fixed as follows.

Let $\F$ be a family of partial functions $f:\dom(f)\to Y$ defined on subsets $\dom(f)$ of a topological space $X$. By the {\em graphoid extension} of $\F$ we understand the multi-valued function $\bar\F:X\setmap  Y$ assigning to each point $x\in X$ the set $\bar \F(x)$ of all points $y=(y_f)_{f\in \F}\in Y^\F$ such that for any neighborhood $O(x)\subset X$ of the point $x$, any finite subfamily $\mathcal E\subset \F$,  and neighborhoods $O(y_f)\subset Y$ of the points $y_f$, $f\in \mathcal E$, there is a point $x'\in O(x)\cap\bigcap_{f\in\mathcal E}\dom(f)$ such that $f(x')\in O(y_f)$ for all $f\in \mathcal E$.
The graph $$\Gamma(\bar\F)=\{(x,y)\in X\times Y^\F:y\in\F(x)\}$$of the multi-valued function $\bar \F$ is called the {\em graphoid} of the family $\F$. The set $\dom(\bar\F)=\{x\in X:\bar\F(x)\ne\emptyset\}$ is called the {\em domain} of $\bar\F$.

If the family $\F$ is empty, then $\bar Y^\F=Y^\emptyset$ is a singleton and the graphoid $\bar\Gamma(\F)$ coincides with $X\times Y^\emptyset$.

It can be shown that for any family of rational functions $\F\subset\IR(x_1,\dots,x_k)$ its graphoid extension $\bar \F:\bar\IR^k\setmap \bar\IR^F$ has the following properties:
\begin{enumerate}
\item $\bar \F$ is upper semi-continuous;
\item $\dom(\bar \F)=\bar\IR^k$;
\item for any  subfamily $\mathcal E\subset\F$ and the coordinate projection $\pr_{\mathcal E}:\bar\IR^\F\to\bar\IR^{\mathcal E}$ the composition $\pr_{\mathcal E}\circ\bar \F:\IR^k\to\bar\IR^{\mathcal E}$ coincides with the graphoid  extension $\bar{\mathcal E}$ of $\mathcal E$;
\item If $\dom(\F)=\bigcap_{f\in \F}\dom(f)$ is dense in $\IR^k$, then $\bar \F$ coincides with the graphoid extension of the partial function $\F:\dom(\F)\to\bar\IR^\F$, $\F:x\mapsto(f(x))_{f\in \F}$.
\end{enumerate}

In this paper we shall consider the following problem.

\begin{problem}\label{prob1} Given a family of rational functions $\F\subset\IR(x_1,\dots,x_k)$, study topological (and dimension) properties of the graphoid $\bar \Gamma(\F)\subset\bar \IR^k\times\IR^\F$ of $\F$.

 A precise question: Has $\bar\Gamma(\F)$ the topological dimension $\dim(\Gamma(\bar\F))=k$?
\end{problem}

This problem was motivated by the problem of studying the topological structure of the space of real places of a field of rational functions, posed in \cite{BG} and partly solved in \cite{MMO}, \cite{EO}. In this paper we shall answer Problem~\ref{prob1} for $k\le 2$.

In fact, the case $k=1$ is trivial: each rational function $f\in\IR(x)$ admits a continuous extension to $\bar\IR$ and can be thought as a continuous function $f:\bar\IR\to\bar\IR$. Then any family $\F\subset\IR(x)$ can be thought as a continuous function $\F:\bar\IR\to\bar\IR^\F$. Its graphoid extension $\bar \F$ coincides with $\F$. Consequently, the graphs $\bar \Gamma(\F)=\Gamma(\F)$ are homeomorphic to the projective real line $\bar\IR$ and hence, $\dim(\bar\Gamma(\F))=\dim(\bar\IR)=1$.

The case of two variables is much more difficult.
The following theorem is the main result of this paper and has a long and technical proof that exploits tools of Real and Complex Analysis, Algebraic Geometry, Algebraic Topology, Dimension Theory, General Topology, and Combinatorics. This theorem has been applied in \cite{BKMOP} for evaluating the dimension of the space of real places of some function fields.

\begin{theorem}\label{main1} For any family of rational functions  $\F\subset\IR(x,y)$ its graphoid $\bar\Gamma(\F)\subset\bar\IR^2\times\bar\IR^\F$ has covering topological dimension $\dim(\Gamma(\bar\F))=2$.
\end{theorem}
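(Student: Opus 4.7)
I plan to handle the two inequalities $\dim\bar\Gamma(\F)\geq 2$ and $\dim\bar\Gamma(\F)\leq 2$ separately and in both cases reduce from an arbitrary $\F$ to finite subfamilies. Property~(3) of the graphoid extension gives, for every inclusion $\mathcal E\subset\mathcal E'\subset\F$, a continuous surjection $\bar\Gamma(\mathcal E')\twoheadrightarrow\bar\Gamma(\mathcal E)$ sending $(x,y)\mapsto(x,\pr_{\mathcal E}(y))$, and using compactness one checks that $\bar\Gamma(\F)$ is homeomorphic to the inverse limit of the compact Hausdorff system $\{\bar\Gamma(\mathcal E):\mathcal E\text{ finite}\subset\F\}$. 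Since every finite open cover of a compact Hausdorff inverse limit is refined by the pull-back of a finite cover of some stage, $\dim\bar\Gamma(\F)\leq\sup_{\mathcal E}\dim\bar\Gamma(\mathcal E)$, and since \v{C}ech cohomology turns the inverse limit into a direct limit, $\check H^2(\bar\Gamma(\F))=\varinjlim_{\mathcal E}\check H^2(\bar\Gamma(\mathcal E))$. Both bounds thereby reduce to the statement $\dim\bar\Gamma(\mathcal E)=2$ for finite $\mathcal E=\{f_1,\ldots,f_n\}\subset\IR(x,y)$.

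For the lower bound in the finite case I would argue by induction on $n$, using the Leray spectral sequence (or the Vietoris--Begle theorem where applicable) applied to the closed surjection $\bar\Gamma(\mathcal E\cup\{g\})\twoheadrightarrow\bar\Gamma(\mathcal E)$: its fibres are single points over $\dom(g)$ and compact subsets of $\bar\IR$ of dimension at most $1$ elsewhere, so the higher direct images $R^{\geq 1}$ are concentrated on the finite pole set and contribute nothing to $\check H^2$. The base case $\bar\Gamma(\emptyset)=\bar\IR^2\cong S^1\times S^1$ has $\check H^2=\IZ\neq 0$, so the induction propagates non-triviality of $\check H^2$ to every $\bar\Gamma(\mathcal E)$; the direct limit $\check H^2(\bar\Gamma(\F))=\varinjlim_{\mathcal E}\check H^2(\bar\Gamma(\mathcal E))\neq 0$ then gives $\dim\bar\Gamma(\F)\geq 2$ for any $\F$.

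For the upper bound in the finite case I would use that $\bar\Gamma(\mathcal E)$ is a compact semialgebraic subset of $\bar\IR^{n+2}$, namely the closure in the smooth real projective variety $(\mathbb P^1_{\IR})^{n+2}$ of the 2-dimensional graph of the rational map $\mathcal E\colon\bar\IR^2\dashrightarrow\bar\IR^n$. Resolving the finitely many indeterminacy points by a finite sequence of blow-ups of $\bar\IR^2$ produces a smooth compact real algebraic surface $\widetilde X$, a regular morphism $\widetilde{\mathcal E}\colon\widetilde X\to\bar\IR^n$, and a birational projection $\sigma\colon\widetilde X\to\bar\IR^2$ whose induced map $(\sigma,\widetilde{\mathcal E})\colon\widetilde X\to\bar\IR^2\times\bar\IR^n$ has image equal to $\bar\Gamma(\mathcal E)$. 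Hence $\bar\Gamma(\mathcal E)$ has semialgebraic dimension $2$, with equality coming from the 2-dimensional graph over $\dom(\mathcal E)$; and since the covering dimension of a compact semialgebraic set coincides with its semialgebraic dimension, $\dim\bar\Gamma(\mathcal E)\leq 2$.

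The principal obstacle is the identification of the topological graphoid closure with the image of the algebraic-geometric resolution. At a pole the fibre $\bar{\mathcal E}(x_0)$ can be a nontrivial continuum --- for $f=x/y$ at the origin it fills all of $\bar\IR$ --- so the naive Hurewicz estimate $\dim\leq\dim\bar\IR^2+\dim(\text{fibre})=3$ is insufficient, and the sharper bound depends on a delicate local analysis. One must combinatorially track the tree of infinitely near points produced by iterated blow-ups, describe the limits of each $f_i$ along each exceptional $\mathbb P^1$ analytically via local Puiseux expansions (where real and complex analysis enter), and verify that the image of the resolution really is the full topological closure of the graph. Packaging these local analyses globally, and then extending them through the inverse limit to infinite $\F$, is what draws in the full range of tools --- analysis, algebraic geometry, combinatorics, algebraic topology and dimension theory --- that the authors list, and accounts for the length and technicality of the proof.
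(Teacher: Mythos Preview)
Your reduction to finite subfamilies via the inverse-limit representation is correct and matches the paper's Lemma~\ref{l:4.1} and Corollary~\ref{c:5.4}. For the upper bound, your resolution-of-indeterminacy argument is a legitimate alternative to the paper's route (which instead invokes Theorem~\ref{local}(6) to bound each singular fibre $\bar{\mathcal E}(z)$ by dimension~$1$ and then applies the Sum Theorem); the identification of the topological closure with the image of the resolution, which you flag as delicate, does go through since the exceptional locus is nowhere dense in $\widetilde X$ and the resolved map is continuous.

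Your lower-bound argument, however, has a genuine gap. You propose to propagate $\check H^2\ne 0$ (with $\IZ$ coefficients) inductively through the Leray spectral sequence of $p_\alpha^\beta:\bar\Gamma(\beta)\to\bar\Gamma(\alpha)$, asserting that the higher direct images $R^{\ge 1}$, being supported on finitely many points, ``contribute nothing to $\check H^2$''. But $R^1$ contributes through the differential $d_2:E_2^{0,1}\to E_2^{2,0}$, and this differential can kill $E_2^{2,0}=H^2(B)$ entirely. Concretely, take $\mathcal E=\{x/y\}$: the graphoid $\bar\Gamma(\mathcal E)$ is the real blow-up of the torus $\bar\IR^2$ at the two indeterminacy points $(0,0)$ and $(\infty,\infty)$, and each exceptional $\bar\IR$ sits inside a M\"obius band. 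Thus $\bar\Gamma(\mathcal E)$ is a non-orientable closed surface and $\check H^2(\bar\Gamma(\mathcal E);\IZ)=0$; your induction fails at the first step.

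Switching to $\IZ_2$ coefficients is the natural fix, but then one must still verify that the transition maps $\check H^2(\bar\Gamma(\alpha);\IZ_2)\to\check H^2(\bar\Gamma(\beta);\IZ_2)$ are injective, i.e.\ that the corresponding $d_2$ always vanishes in $\IZ_2$-cohomology. This is precisely the content of the paper's hard Theorem~\ref{local}(7), which shows that the boundary map around each singular point has \emph{even} degree (is $\IZ_2$-trivial); the paper packages this as the extension-dimension obstruction $\edim(\bar\Gamma(\F))\not\le S^1$ via Lemmas~\ref{l:2.5} and~\ref{l:5.1}. The non-orientability phenomenon---and the companion Theorem~\ref{main2} showing $\dim_G\bar\Gamma(\F)=1$ for $2$-divisible $G$, hence $\check H^2(\bar\Gamma(\F);\IQ)=0$---explains why an argument specific to $\IZ_2$ is unavoidable and why the local parity analysis cannot be bypassed.
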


This theorem reveals only a part of the truth about the dimension of $\bar\Gamma(\F)$. The other part says that for sufficiently rich families $\F$ the graphoid $\bar\Gamma(\F)$ has cohomological dimension $\dim_G\Gamma(\bar \F)=1$ for any 2-divisible abelian group $G$! So, $\bar\Gamma(\F)$ is a natural example of a compact space which is not dimensionally full-valued. A classical example of this sort is the Pontryagin surface: a surface with glued M\"obius bands at each point of a countable dense set, see \cite[\S4.7]{ADim}.

The covering and cohomological dimensions are partial cases of the extension dimension \cite{ED} defined as follows. We say that the {\em extension dimension} of a topological space $X$ does not exceed a topological space $Y$ and write $\edim(X)\le Y$ if each continuous map $f:A\to Y$ defined on a closed subspace $A$ of $X$ can be extended to a continuous map  $\bar f:X\to Y$. By Theorem 3.2.10 of \cite{End}, a compact Hausdorff space $X$ has covering dimension $\dim(X)\le n$ for some $n\in\w$ if and only if $\edim(X)\le S^n$ where $S^n$ stands for the $n$-dimensional sphere.

On the other hand, for a non-trivial abelian group $G$, a compact topological space $X$ has cohomological dimension $\dim_G(X)\le n$ if and only if $\edim(X)\le K(G,n)$ where $K(G,n)$ is the Eilenberg-MacLane complex of $G$ (this is a CW-complex having all homotopy groups trivial except for the $n$-th homotopy group $\pi_n(K(G,n))$ which is isomorphic to $G$, see \cite[\S4.2]{Hat}). It is known \cite{Dra} that $\dim_G(X)\le\dim(X)$ for each abelian group $G$ and $\dim(X)=\dim_\IZ(X)$ for any finite-dimensional compact space $X$.
A group $G$ is called {\em 2-divisible} if for each $x\in G$ there is  $y\in G$ with $y^2=x$.

Theorem~\ref{main1} is completed by the following

\begin{theorem}\label{main2} If a family of rational functions $\F\subset\IR(x,y)$ contains a family of linear fractional transformations
$$\Big\{\frac{x-a}{y-b}:(a,b)\in D\Big\}.$$ for some dense subset $D$ of $\IR^2$, then the graphoid $\bar\Gamma(\F)$ of $\F$ has cohomological dimensions $\dim_\IZ(\bar\Gamma(\F))=\dim(\bar\Gamma(\F))=2$ and $\dim_G(\Gamma(\bar F))=1$ for any non-trivial 2-divisible abelian group $G$.
\end{theorem}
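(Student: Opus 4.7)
The equalities $\dim_\IZ\bar\Gamma(\F)=\dim\bar\Gamma(\F)=2$ follow immediately from Theorem~\ref{main1} together with the equality $\dim=\dim_\IZ$ on finite-dimensional compacta quoted in the introduction, and the lower bound $\dim_G\ge 1$ for a non-trivial $G$ is automatic on the non-zero-dimensional compactum $\bar\Gamma(\F)$. So the substantive content of the theorem is the upper bound $\dim_G(\bar\Gamma(\F))\le 1$ for every non-trivial 2-divisible abelian group $G$. The plan is first to invoke the Bockstein theorem on cohomological dimension of compacta, which reduces this inequality to $G$ ranging over the 2-divisible members of the Bockstein basis, in particular $G=\IQ$ and $G\in\{\IZ/p,\IZ_{(p)},\IZ_{p^\infty}\}$ for each odd prime $p$. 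Translating to the extension-dimension formulation, the goal becomes to extend every continuous map $\phi:A\to K(G,1)$ from a closed $A\subset\bar\Gamma(\F)$ to all of $\bar\Gamma(\F)$.

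The key geometric input is that each point $(a,b)\in D$ carries a local M\"obius band in $\bar\Gamma(\F)$, making it a Pontryagin-type space. Indeed, along a line through $(a,b)$ of slope $m$ the value of $f_{a,b}(x,y)=(x-a)/(y-b)$ tends to $1/m$, and since only the unoriented slope is detected in the limit, the closure of the graph of $f_{a,b}$ near $(a,b)$ is the real projective blow-up of $\bar\IR^2$ at $(a,b)$, homeomorphic to a closed M\"obius band $M$. The inclusion $\partial M\hookrightarrow M$ induces the degree-two map on $\pi_1=\IZ$, so a map $\partial M\to K(G,1)$ extends across $M$ if and only if the class of its boundary in $\pi_1(K(G,1))=G$ lies in $2G$; for 2-divisible $G$ this holds automatically, so every such M\"obius band is invisible to extensions into $K(G,1)$.

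To globalize this local principle, the plan is to present $\bar\Gamma(\F)$ as an inverse limit $\varprojlim_n\GGamma_n$, where $\GGamma_n$ is the graphoid of a finite subfamily $\F_n\subset\F$ containing the blow-up functions $f_{a_i,b_i}$ along an exhausting sequence in a countable dense $D_0\subset D$ (reducing the possibly uncountable case to a countable cofinal one via property~(3) of graphoid extensions). Each $\GGamma_n$ is an iterated real blow-up of the torus $\bar\IR^2$ and is therefore a closed non-orientable polyhedral surface on which $\phi$ can be extended by ordinary obstruction theory combined with the M\"obius-invisibility principle; assembling these extensions into a map on $\bar\Gamma(\F)$ completes the argument.

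The hardest step will be arranging the level-wise extensions to be mutually compatible through the bonding maps $\GGamma_{n+1}\to\GGamma_n$, so that they converge to a well-defined continuous map on the inverse limit rather than merely existing on each stage. This requires a careful obstruction-theoretic argument controlling the inverse system of homotopy classes in Čech cohomology and invoking the 2-divisibility of $G$ uniformly across the entire dense collection of blow-up points, rather than just once per M\"obius band, which I expect to be the technical heart of the proof.
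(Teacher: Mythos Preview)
Your local picture is exactly right: the closure of the graph of $f_{a,b}=(x-a)/(y-b)$ near $(a,b)$ is a M\"obius band, and for any $Y$ with $2$-divisible $\pi_1$ (in particular $K(G,1)$ with $G$ $2$-divisible) a map on $\partial M$ extends over $M$. This is precisely the geometric mechanism the paper uses. The reductions $\dim=\dim_\IZ=2$ and $\dim_G\ge1$ are also fine (the latter because $\bar\Gamma(\F)$ is connected, which is what actually drives the argument; ``non-zero-dimensional'' alone would not).

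The gap is in your globalization. You assert that each finite-level graphoid $\GGamma_n=\bar\Gamma(\F_n)$ is ``an iterated real blow-up of the torus $\bar\IR^2$ and therefore a closed non-orientable polyhedral surface''. This is false as soon as $\F_n$ contains a rational function other than the $f_{a,b}$'s: at a singular point of a general $f\in\F_n$ the fiber $\bar\F_n(z)$ is only known to be a finite union of arcs (Theorem~\ref{local}(6)), and $\GGamma_n$ need not be a $2$-manifold there. But you \emph{must} include such functions in $\F_n$, since otherwise $\bigcup_n\F_n$ is not cofinal in $[\F]^{<\omega}$ and $\varprojlim_n\GGamma_n$ is the graphoid of the subfamily $\{f_{a,b}\}$, not of $\F$. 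So ordinary surface obstruction theory is not available on $\GGamma_n$, and the ``hardest step'' you flag (building a coherent inverse system of extensions) sits on an unstable foundation.

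The paper avoids both difficulties by using the spectral criterion of Lemma~\ref{l:5.1}: to get $\edim(\bar\Gamma(\F))\le Y$ one need only show that for every finite $\alpha\subset\F$ and every $f_\alpha:A_\alpha\to Y$ on a closed $A_\alpha\subset\GGamma(\alpha)$ there exists \emph{some} finite $\beta\supset\alpha$ for which $f_\alpha\circ p^\beta_\alpha|A_\beta$ extends to all of $\GGamma(\beta)$. No tower of compatible extensions is required. The paper then: (i) uses Theorem~\ref{local}(6) to extend $f_\alpha$ over the singular fibers and, via $Y\in\ANE(\Comp)$, over a closed neighborhood $\bar A_\alpha$ whose complement in $\GGamma(\alpha)$ is a finite union of open disks $C$; (ii) for each such disk picks a single point $(a_C,b_C)\in D\cap C$ and sets $\beta=\alpha\cup\{f_{a_C,b_C}:C\}$. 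The preimage $\bar C^\beta\subset\GGamma(\beta)$ of each $\bar C$ is then a closed M\"obius band, and the $2$-divisibility of $\pi_1(Y)$ gives the extension over each $\bar C^\beta$. Thus the blow-ups are chosen \emph{after} seeing the extension problem and only where an obstruction lives, which sidesteps both the non-manifold structure of general $\GGamma(\alpha)$ and any inverse-limit compatibility issues. (Incidentally, the Bockstein reduction is unnecessary: the paper proves the stronger statement $\edim(\bar\Gamma(\F))\le Y$ for every path-connected $Y\in\ANE(\Comp)$ with $2$-divisible $\pi_1(Y)$.)
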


Theorems~\ref{main1} and \ref{main2} will be proved in Sections~\ref{s:t1} and \ref{s:t2}.
The main instrument in the proof of these theorems is Theorem~\ref{local} describing the local structure of the graphoid extension $\bar\F$ of a finite family of rational functions $\F\subset\IR(x,y)$.
 Section~\ref{s:prel} contains some notation and preliminary information, necessary for the proof of Theorem~\ref{local}.

\section{Preliminaries}\label{s:prel}

This section has preliminary character and contains notations and facts necessary for understanding the proof of Theorem~\ref{local}.

\subsection{Notation and Terminology} For two points $a,b\in\IR^2$ by $[a,b]=\{(1-t)a+tb:t\in[0,1]\}$ we shall denote the affine segment connecting $a$ and $b$ and by
$$\alpha_{a,b}:[0,1]\to[a,b],\;\;\alpha_{a,b}:t\mapsto(1-t)a+tb,$$the corresponding affine map.
Let also $]a,b[\;=[a,b]\setminus\{a,b\}$ be the open segment with the end-points $a,b$.
For a subset $A\subset\IR^2$ and a real number $t$ let $tA=\{ta:a\in A\}$ be a homothetic copy of $A$.
By $\bold 0=(0,0)$ we denote the {\em origin} of the plane $\IR^2$.

Two points $a,b$ of a subset $B\subset\IR^2\setminus\{\mathbf 0\}$ are called {\em neighbour points of $B$} if $a\ne b$ and $a,b$ are unique points of the set $B$ that lie in the convex cone $\{ua+vb:u,v\ge 0\}$. By $\mathcal N(B)$ (resp. $\mathcal N\{B\}$) we denote the family of ordered pairs $(a,b)\in B^2$ (resp. unordered pairs $\{a,b\}\subset B$) of neighbor points of $B$. This family will often occur in the proof of Theorem~\ref{local} below, so this is an important notion.

A subset $A$ of a metric space $(X,d)$ is called an {\em $\e$-net} in $X$ if for each point $x\in X$ there is a point $a\in A$ with $d(x,a)<\e$. For a point $z$ of a metric space $(X,d)$ and $\e>0$ let $B(z,\e)=\{x\in X:d(x,z)<\e\}$, $\bar B(z,\e)=\{x\in X:d(x,z)\le \e\}$, and $S(z,\e)=\{x\in X:d(x,z)=\e\}$ denote respectively the open $\e$-ball, closed $\e$-ball and $\e$-sphere centered at the point $z$.

A map $f:X\to Y$ between topological spaces $X,Y$ is {\em monotone} if $f^{-1}(y)$ is connected for each $y\in Y$.
It is easy to see that for a connected subspace $X\subset\IR$ a function $f: X\to\IR$ is monotone if and only if $f$ is either non-increasing or non-decreasing.

On the extended real line $\bar\IR=\IR\cup\{\infty\}$ we shall consider the metric $d$ inherited from the complex plane $\IC$ after the identification of $\bar\IR$ with the unit circle $\IT=\{z\in\IC:|z|=1\}$ with help of stereographic projection that maps $\IT\setminus\{i\}$ onto the real line $\IR$. In the metric $d$ the extended real line $\bar\IR$ has diameter 2. Observe that each (open or closed) ball in the metric space $(\bar\IR,d)$ is connected.

By an {\em arc} we understand a topological copy of the closed interval $[0,1]$. An arc $A$ in $\bar\IR^n$ is called {\em a monotone arc} if for each $i\in n$ the coordinate projection $\pr_i:A\to\bar\IR$ is a monotone map.

\subsection{Pusieux-analytic functions}
A function $\varphi:A\to\IR$ defined on a subset $A\subset \IR$ is called {\em analytic} if for every $a\in A$ there are $\e>0$ and real coefficients $(c_n)_{n\in\w}$ such that $\sum_{n=0}^\infty |c_n|\e^n<\infty$ and for every $x\in A$ with $|x-a|<\e$ we get $f(x)=\sum_{n=0}^\infty c_n(x-a)^n$.

Let $\e$ be a positive real number. A function $\varphi:[0,\e]\to\IR$ is called {\em Pusieux-analytic} if $\varphi|(0,\e]$ is analytic and there are $m\in\IN$, $\delta\in (0,\e)$ and an analytic function $\psi:[0,\sqrt[m]{\delta})\to\IR$ such that $\varphi(x)=\psi(\kern-3pt\sqrt[m]{x})$ for all $x\in[0,\delta)$.
The smallest number $m$ with this property is called the {\em Pusieux denominator} of $\varphi$. In a neighborhood of zero a Pusieux-analytic function $\varphi(x)$ develops into a series $\sum_{k=0}^\infty c_k x^{\frac{k}{m}}$ called the {\em Newton-Pusieux series} of $\varphi$, see \cite[8.3]{BriKno}. The interval $[0,\e]$ will be called the {\em domain} of the Pusieux analytic function $\varphi$ and will be denoted by $\dom(\varphi)$.

The Uniqueness Theorem for analytic functions (see e.g., \cite{Rud}) implies the following Uniqueness Theorem for Pusieux-analytic functions.

\begin{theorem} Two Pusieux-analytic functions $f,g:[0,\e]\to\IR$ are equal if and only if the set $\{x\in[0,\e]:f(x)=g(x)\}$ is infinite.
\end{theorem}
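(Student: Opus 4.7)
The forward implication is immediate: if $f=g$ on $[0,\e]$ then $\{x\in[0,\e]:f(x)=g(x)\}=[0,\e]$ is infinite. For the converse, the plan is to split into cases depending on where the agreement set $E=\{x\in[0,\e]:f(x)=g(x)\}$ accumulates. Since $E$ is an infinite subset of the compact interval $[0,\e]$, by the Bolzano--Weierstrass theorem it has a limit point $x_0\in[0,\e]$.

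In the easy case $x_0\in(0,\e]$, I note that by definition of Pusieux-analyticity the restrictions $f|_{(0,\e]}$ and $g|_{(0,\e]}$ are analytic on the connected open set $(0,\e]$. The classical Uniqueness Theorem for real-analytic functions (see \cite{Rud}) then forces $f=g$ throughout $(0,\e]$, and continuity at $0$ (which both $f$ and $g$ inherit from their Pusieux representations) yields $f(0)=g(0)$.

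The remaining case $x_0=0$ is where the Pusieux structure matters. Let $m_f,m_g$ be Pusieux denominators of $f,g$, and pick the common denominator $m=\mathrm{lcm}(m_f,m_g)$. Choose $\delta\in(0,\e)$ small enough that both $f$ and $g$ admit representations $f(x)=\psi_f(\sqrt[m]{x})$, $g(x)=\psi_g(\sqrt[m]{x})$ for $x\in[0,\delta)$, where $\psi_f,\psi_g:[0,\sqrt[m]{\delta})\to\IR$ are analytic (rewriting an analytic function of $\sqrt[m_f]{x}$ as an analytic function of $\sqrt[m]{x}$ is automatic by substituting $\sqrt[m_f]{x}=(\sqrt[m]{x})^{m/m_f}$). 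Since $0$ is a limit point of $E$, the preimage set $\{\sqrt[m]{x}:x\in E\cap[0,\delta)\}$ has $0$ as a limit point in $[0,\sqrt[m]{\delta})$, and the analytic functions $\psi_f,\psi_g$ coincide on this set. The Uniqueness Theorem for analytic functions then gives $\psi_f=\psi_g$ on $[0,\sqrt[m]{\delta})$, hence $f=g$ on $[0,\delta)$. In particular the agreement set $E$ now contains the interval $[0,\delta)$, which has limit points in $(0,\e]$, and the previous case applies to conclude $f=g$ on all of $[0,\e]$.

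The only mildly subtle step is the passage to a common Pusieux denominator; beyond that, the argument is a reduction to the classical real-analytic uniqueness theorem via the substitution $x\mapsto\sqrt[m]{x}$, so I do not anticipate any serious obstacle.
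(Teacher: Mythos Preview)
Your argument is correct and is exactly the reduction the paper has in mind: the paper does not spell out a proof but simply remarks that the classical Uniqueness Theorem for analytic functions implies this statement, and your two-case analysis (accumulation away from $0$ handled directly, accumulation at $0$ handled via a common Pusieux denominator and the substitution $x\mapsto\sqrt[m]{x}$) is precisely how one makes that implication explicit. One cosmetic slip: $(0,\e]$ is not open in $\IR$, so either work on $(0,\e)$ and recover the value at $\e$ by continuity, or note that the power-series representation at $\e$ extends $f$ and $g$ analytically past $\e$; either way the identity theorem applies and nothing in your argument changes.
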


The Pusieux analycity can  be also introduced for functions defined on an interval $[-\e,0]$. Namely, we say that a function $\varphi:[-\e,0]\to\IR$ is {\em Pusieux-analytic} if the function $\psi:[0,\e]\to\IR$, $\psi:x\mapsto \varphi(-x)$, is Pusieux analytic.

Two Pusieux analytic function $\varphi$, $\varphi^*$ are called {\em conjugate} if they have the same Pusiuex denominator $m$ and for some analytic function $\psi:(-\delta,\delta)$ we get
$$\{(t^m,\psi(t)):|t|<\delta\}=\{(x,\varphi(x)):x\in\dom(\varphi)\cap(-\delta^m,\delta^m)\}\cup
\{(x,\varphi^*(x)):x\in\dom(\bar\varphi)\cap(-\delta^m,\delta^m)\}.$$

It can be shown that the Pusiuex denominator $m$ of two conjugate Pusieux analytic functions $\varphi,\varphi^*$ is odd if and only if $\dom(\varphi)\cap\dom(\varphi^*)=\{0\}$.

For example, the Pusieux analytic functions $\varphi_1:[-\e,0]\to\IR$, $\varphi_1:x\mapsto x^{\frac13}$, and $\varphi^*_1:[0,\e]\to\IR$, $\varphi^*_1:x\mapsto x^{\frac13}$, are conjugate and have Pusieux denominator 3.

The Pusieux analytic functions $\varphi_2:[0,\e]\to\IR$, $\varphi_2:x\mapsto x^{\frac32}$, and $\varphi_2^*:[0,\e]\to\IR$, $\varphi^*_2:x\mapsto -x^{\frac32}$, are conjugate and have Pusieux denominator 2.

\begin{picture}(200,140)(-100,-60)
\put(-60,0){\vector(1,0){120}}
\put(0,-50){\vector(0,1){100}}
\put(55,-10){$x$}
\put(5,46){$y$}
\qbezier(0,0)(0,30)(50,40)
\put(40,30){$\varphi_1$}
\qbezier(0,0)(0,-30)(-50,-40)
\put(-48,-32){$\varphi^*_1$}

\put(140,0){\vector(1,0){120}}
\put(200,-50){\vector(0,1){100}}
\put(255,-10){$x$}
\put(205,46){$y$}
\qbezier(200,0)(200,30)(250,50)
\put(240,35){$\varphi_2$}
\put(240,-38){$\varphi^*_2$}
\qbezier(200,0)(200,-30)(250,-50)
\end{picture}

\begin{lemma}\label{l:conjug} If $\varphi,\varphi^*$ are two conjugate Pusieux analytic functions, then for any rational function $f\in\IR(x,y)$ the limits
$\lim_{x\to 0}f(x,\varphi(x))$ and $\lim_{x\to 0}f(x,\varphi^*(x))$ exist and are equal.
\end{lemma}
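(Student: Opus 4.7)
The plan is to pull the problem back to a single-variable analytic/meromorphic question by means of the parametrization $t\mapsto (t^m,\psi(t))$ supplied by the conjugacy definition. Writing $f=p/q$ with coprime polynomials $p,q\in\IR[x,y]$, I set
$$P(t):=p(t^m,\psi(t)),\qquad Q(t):=q(t^m,\psi(t)),$$
both real-analytic on $(-\delta,\delta)$. By the definition of conjugacy, as $t$ ranges over one one-sided neighborhood of $0$ the point $(t^m,\psi(t))$ traces a piece of the graph of $\varphi$, while for $t$ of the opposite sign it traces a piece of the graph of $\varphi^*$. Hence the two limits in the lemma are precisely the one-sided limits at $t=0$ of $F(t):=P(t)/Q(t)$, and it suffices to show that both these one-sided limits exist in $\bar\IR$ and coincide.

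Next I would dispense with the degenerate case where $Q$ vanishes identically on a neighborhood of $0$. Since $\gcd(p,q)=1$, the common zero set of $p$ and $q$ in $\IR^2$ is finite (e.g.\ by a resultant argument, or elementary dimension arguments), so $p$ and $q$ cannot both vanish on the infinite analytic arc $\{(t^m,\psi(t)):|t|<\delta\}$. Thus if $Q\equiv 0$, then $P(t_0)\ne 0$ for $t_0$ near $0$, so $f(t_0^m,\psi(t_0))=\infty$ and both one-sided limits of $F$ equal $\infty$. Otherwise $Q\not\equiv 0$; symmetrically, if $P\equiv 0$ then $F\equiv 0$ where defined and the limit is $0$. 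In the remaining generic case $P$ and $Q$ are both nonzero real-analytic functions, so I can factor $P(t)=t^a\tilde P(t)$ and $Q(t)=t^b\tilde Q(t)$ with $\tilde P,\tilde Q$ analytic and nonvanishing at $0$. Then $F(t)=t^{a-b}\,\tilde P(t)/\tilde Q(t)$, and the two-sided limit $\lim_{t\to 0}F(t)$ exists in $\bar\IR$: equal to $\tilde P(0)/\tilde Q(0)$ when $a=b$, to $0$ when $a>b$, and to $\infty$ when $a<b$. The last case is where it is essential that $\bar\IR$ is a one-point compactification of $\IR$, i.e.\ a circle: even when $F(t)\to+\infty$ on one side of $0$ and $-\infty$ on the other, both one-sided limits coincide in $\bar\IR$. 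In every case the one-sided limits of $F$ at $0$ coincide, and transporting back along the parametrization yields the required equality of limits for $f$.

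The main obstacle I anticipate is not the analytic core but the bookkeeping to align the sign of $t$ with the correct branch ($\varphi$ or $\varphi^*$), which splits by the parity of $m$: when $m$ is even, $t^m\ge 0$ so both $\dom(\varphi)$ and $\dom(\varphi^*)$ lie in $[0,\delta^m)$ and the two one-sided $t$-neighborhoods of $0$ parametrize the two distinct branches over the same $x$-interval; when $m$ is odd, $t\mapsto t^m$ is a bijection and the conjugate domains lie on opposite sides of $0$ in the $x$-variable. Once this correspondence is pinned down, the meromorphy of $F$ combined with the circle structure of $\bar\IR$ finishes the proof.
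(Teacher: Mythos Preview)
Your argument is correct and follows the same route as the paper: both reduce to the parametrization $t\mapsto(t^m,\psi(t))$, write $P(t)=p(t^m,\psi(t))$ and $Q(t)=q(t^m,\psi(t))$ as analytic functions, and conclude that the two-sided limit $\lim_{t\to0}P(t)/Q(t)$ exists in $\bar\IR$. The paper leaves that last step as an exercise in Maclaurin expansions, whereas you spell out the $t^a\tilde P/t^b\tilde Q$ factorization and the degenerate cases $P\equiv0$ or $Q\equiv0$ explicitly, and you rightly flag that the one-point compactification of $\IR$ is what makes the $a<b$ case go through.
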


\begin{proof} The lemma is trivial if the rational function $f$ is constant. If $f$ is not constant we can write it as the fraction $f=\frac{p}{q}$ of two relatively prime polynomials $p$ and $q$. Observe that for each analytic function $\psi:[-\delta,\delta]\to\IR$ and any $m\in\IN$ the functions $p(t^m,\psi(t))$ and $q(t^m,\psi(t))$ are analytic and hence develop into Maclaurin series at a neighborhood of zero. This fact can be used to show that a (finite or infinite) limit
 $$\lim_{t\to 0}f(t^m,\psi(t))=\lim_{t\to 0}\frac{p(t^m,\psi(t))}{q(t^m,\psi(t))}$$
exits.

Now let $m$ be the Pusieux denominator of the conjugated Pusieux-analytic functions $\varphi$ and $\varphi^*$ and $\psi:[-\delta,\delta]\to\IR$ be an analytic function such that
$$\{(t^m,\psi(t)):|t|<\delta\}=\{(x,\varphi(x)):x\in\dom(\varphi)\cap(-\delta^m,\delta^m)\}
\cup\{(x,\varphi^*(x)):x\in\dom(\varphi^*)\cap(-\delta^m,\delta^m)\}.$$
It follows that
$$\lim_{x\to 0}f(x,\varphi(x))=\lim_{t\to 0}f(t^m,\psi(t))=\lim_{x\to0}{f(x,\varphi^*(x))}.$$
\end{proof}

\subsection{A local structure of a plane algebraic curve}

In this section we recall the known description of the local structure of an algebraic curve.

By an {\em algebraic curve} we understand a set of the form $$p^{-1}(0)=\{(x,y)\in\IR^2:p(x,y)=0\}$$ where $p\in\IR(x,y)$ is a non-zero polynomial of two variables with real coefficient. The polynomial $p$ in this definition can be also replaced by a non-zero rational function $r=\frac{p}{q}$ where $p$ and $q$ are two relatively prime polynomials. In this case the symmetric difference of the algebraic curves $r^{-1}(0)=\{(x,y)\in \dom(r):r(x,y)=0\}$ and $p^{-1}(0)$ lies in the intersection $p^{-1}(0)\cap q^{-1}(0)$, which is finite according to the classical B\'ezout Theorem or \cite[6.1]{BriKno} or  \cite[5.7]{Kunz}.

We are going to describe the structure of an algebraic curve $A\subset\IR^2$ at a neighborhood of zero $\mathbf 0=(0,0)$.

By $K=({-}1,1)^2$ we shall denote the open square with side $2$ centered at the origin $\mathbf 0$ of the plane and by $\bar K$ and  $K_\partial$ its closure and its boundary in the plane $\IR^2$.
Let $\bar K_\circ=\bar K\setminus\{\mathbf 0\}$ be the square $K$ with removed centrum and $\{\pm1\}^2=\{-1,1\}^2$ be the set of the vertices of the square.

Next, decompose the square $\bar K$ into four triangles:
\begin{itemize}
\item $\bar K_N=\{(x,y)\in \bar K:|x|\le y\}$,
\item $\bar K_{W}=\{(x,y)\in \bar K:|y|\le-x\}$,
\item $\bar K_{S}=\{(x,y)\in \bar K:|x|\le-y\}$,
\item $\bar K_{E}=\{(x,y)\in \bar K:|y|\le x\}$,
\end{itemize}
whose indices $N$, $W$, $S$, $E$ correspond to the directions: North, West, South and East.

A subset $C\subset \IR^2$ is called an {\em east $\e$-elementary curve} if
$C\subset \e\bar K_E$ and $C=\{(x,\varphi(x)):x\in(0,\e]\}$ for a (unique) Pusieux-analytic function $\varphi:[0,\e]\to\IR$. The Pusieux denominator $m$ of $\varphi$ will be called the {\em Pusieux denominator} of $C$.

An east $\e$-elementary curve $C$ is drawn on the following picture:

\begin{picture}(100,120)(-160,-10)
\put(0,0){\line(1,0){100}}
\put(100,0){\line(0,1){100}}
\put(100,100){\line(-1,0){100}}
\put(0,100){\line(0,-1){100}}

\put(0,0){\line(1,1){49}}
\put(100,0){\line(-1,1){49}}
\put(0,100){\line(1,-1){49}}
\put(100,100){\line(-1,-1){49}}

\put(50,50){\circle{3}}
\qbezier(51,51)(75,75)(100,80)
\put(85,65){$C$}
\end{picture}

The definitions of north, west, and south $\e$-elementary curves can be obtained by ``rotating'' the definition of an east $\e$-elementary curve.

Namely, let $R_{\frac\pi2}:(x,y)\mapsto (y,-x)$ be the clockwise rotation of the plane on the angle $\frac{\pi}2$.
Then $R_{\pi}=R_{\frac\pi2}\circ R_{\frac\pi2}$ and $R_{\frac{3\pi}2}=R_\pi\circ R_{\frac\pi2}$ are the clockwise rotations of the plane by the angles $\pi$ and $\frac{3\pi}2$, respectively.

A subset $C\subset\IR^2$ is called {\em north} (resp. {\em west}, {\em south}) {\em $\e$-elementary curve} if $R_{\frac\pi2}(C)$ (resp. $R_{\pi}(C)$, $R_{\frac{3\pi}2}(C)$) is an east $\e$-elementary curve. A subset $C\subset\IR^2$ will be called an {\em $\e$-elementary curve} if $C$ is an east, north, west or south $\e$-elementary curve.

We shall exploit the following fundamental fact describing the local structure of a plane algebraic curve, see \cite[\S 8.3]{BriKno} or \cite[\S16]{Kunz}.

\begin{theorem}\label{Pus} For any algebraic curve $A\subset\IR^2$ there is $\e>0$ such that the intersection $A\cap \e\bar K_\circ$ has finitely many connected components and each of them is an $\e$-elementary curve.
\end{theorem}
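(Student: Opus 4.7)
The plan is to deduce the theorem from the classical Newton--Pusieux factorization together with a case analysis based on the direction in which each branch leaves the origin.

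First I would make some simple reductions. Write $A=p^{-1}(0)$ for a non-zero polynomial $p\in\IR[x,y]$, and replace $p$ by its square-free part (which has the same zero set). If $p(\mathbf 0)\ne 0$, then $\mathbf 0\notin A$, so by closedness of $A$ in $\IR^2$ the intersection $A\cap\e\bar K_\circ$ is empty for sufficiently small $\e$ and the conclusion holds vacuously. Assume therefore $p(\mathbf 0)=0$. If $x\mid p$, the $y$-axis is a component of $A$ whose trace on $\e\bar K_\circ$ splits into one north and one south elementary curve with $\varphi\equiv 0$; similarly if $y\mid p$. After factoring out maximal powers of $x$ and $y$, I may assume $p(x,0)\not\equiv 0$ and $p(0,y)\not\equiv 0$.

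Next I would apply the Weierstrass Preparation Theorem to $p$ in $\IC\{x,y\}$: in a small bidisc around $\mathbf 0$, $p=u(x,y)\cdot P(x,y)$, with $u$ a unit and $P\in\IC\{x\}[y]$ a distinguished monic polynomial of degree equal to the order of $p(0,y)$ at $0$. Newton--Pusieux then factors $P$ as $P(x,y)=\prod_{i=1}^{d}\bigl(y-\varphi_i(x)\bigr)$, where each $\varphi_i$ is a convergent Pusieux series with $\varphi_i(0)=0$. The real branches of $A$ through the origin arise by grouping complex-conjugate series and restricting to real arguments; this yields finitely many real Pusieux-analytic functions $\varphi\colon[0,\delta]\to\IR$ and $\varphi\colon[-\delta,0]\to\IR$, corresponding to east- and west-facing branches. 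Branches whose limiting tangent is not horizontal (those $\varphi_i$ with $\varphi_i(x)/x\to\pm\infty$) are handled by swapping the roles of $x$ and $y$ and applying Newton--Pusieux to $p$ viewed as a polynomial in $x$ over $\IC\{y\}$; these yield the north- and south-facing branches via the rotations $R_{\pi/2}$ and $R_{3\pi/2}$ in the definition of elementary curves.

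Finally I would choose $\e>0$ small enough that (a) every Pusieux series above converges on the interval $[0,\e]$ or $[-\e,0]$; (b) each branch $\{(x,\varphi_i(x)):0<x\le\e\}$ lies in the correct rotated triangle $\e\bar K_D$, which is achievable because the leading Pusieux exponent pins down the limiting slope and shrinking $\e$ enforces the inequality defining $\bar K_D$; and (c) distinct branches are disjoint on $\e\bar K_\circ$, which follows from the Uniqueness Theorem for Pusieux-analytic functions, since two distinct $\varphi_i$ can agree only at isolated points. The resulting finite collection of elementary curves then exhausts the connected components of $A\cap\e\bar K_\circ$, proving the theorem. The main obstacle is the bookkeeping for branches whose limiting direction is diagonal (those with $\varphi_i(x)\sim\pm x$ or $\psi_j(y)\sim\pm y$), as they lie a priori on the shared boundary of two triangles and admit parametrizations in either variable; one must inspect the next non-vanishing term of the Newton--Pusieux expansion to decide to which triangle such a branch belongs after shrinking $\e$.
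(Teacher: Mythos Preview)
Your proposal is essentially correct, but note that the paper does not actually prove this theorem: it is stated as a ``fundamental fact describing the local structure of a plane algebraic curve'' with references to \S8.3 of Brieskorn--Kn\"orrer and \S16 of Kunz, and is used as a black box thereafter. The outline you give via Weierstrass preparation and Newton--Pusieux factorization is precisely the argument contained in those references, so there is nothing in the paper itself to compare against.

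Two minor remarks on the sketch. First, for branches with $\varphi_i(x)/x\to\pm\infty$ (leading Pusieux exponent strictly between $0$ and $1$), rather than redoing Newton--Pusieux in the other variable you can simply invert: such a $\varphi_i$ has a Pusieux-analytic inverse with leading exponent $>1$, which places the branch in a north or south triangle directly. Second, the passage from the complex factorization $P(x,y)=\prod_i(y-\varphi_i(x))$ to real branches deserves a word: a real branch through $\mathbf 0$ corresponds to a Galois orbit of the $\varphi_i$ under $x^{1/m}\mapsto \zeta x^{1/m}$ that is stable under complex conjugation, and one should verify that every real zero of $p$ near $\mathbf 0$ is captured this way. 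The diagonal bookkeeping you flag is genuine but, as you say, resolved by the first non-vanishing higher-order term after shrinking $\e$.
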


For an algebraic curve $A\subset\IR^2$ the number $\e>0$ satisfying the condition of Theorem~\ref{Pus} will be called {\em $A$-small}.

For an $A$-small number $\e$ each connected components of $A\cap\e\bar K$ is an $\e$-elementary curve called an {\em $\e$-branch} of $A$. Each $\e$-branch $C$ of $A$ has a conjugated $\e$-branch $C^*$ of $A$ defined as follows.

Assume first that the $\e$-branch $C$ is an east $\e$-elementary curve. Then $C=\{(x,\varphi(x)):x\in(0,\e]\}$ for some Pusieux-analytic function $\varphi:[0,\e]\to[-\e,\e]$ with Pusieux denominator $m$. For the function $\varphi$ there exist a positive $\delta\le\sqrt[m]{\e}$ and an analytic function $\psi:[-\delta,\delta]\to[-\e,\e]$ such that $\varphi(x)=\psi(\kern-3pt\sqrt[m]{x})$ for all $x\in[0,\delta^m]$.

If $m$ is odd, then the formula $\varphi^*(x)=\psi(\kern-3pt{\sqrt[m]{x}}\,)$ determines a Pusieux-analytic function $\varphi^*:[-\delta^m,0]\to\IR$, which is conjugate to $\varphi$. If $m$ is even, then the conjugate function $\varphi^*:[0,\delta^m]\to\IR$ is defined by the formula $\varphi^*(x)=\psi({-}\kern-2pt\sqrt[m]{x})$.

We claim that the graph $\{(x,\varphi^*(x)):x\in\dom(\varphi^*)\}$ lies in some $\e$-branch $C^*$ of the algebraic curve $A$. Find a polynomial $p\in\IR(x,y)$ such that $A=p^{-1}(0)$. Taking into account that $C\subset A$, we conclude that $p(x,\varphi(x))=0$ for all $x\in[0,\e]$ and hence $p(t^m,\psi(t))=0$ for all $t\in [0,\delta]$. Taking into account that the formula $f(t)=p(t^m,\psi(t))$ determines an analytic function $f:[-\delta,\delta]\to\IR$, which is zero on $[0,\delta]$, we conclude that $f\equiv 0$.

If $m$ is odd, then for every $x\in[-\delta^m,0]=\dom(\varphi^*)$ and $t=\sqrt[m]{x}$, we get $p(x,\varphi^*(x))=p(t^m,\psi(t))=0$.
If $m$ is even, then for every $x\in[0,\delta^m]=\dom(\varphi^*)$ and $t=-\sqrt[m]{x}$, we get $p(x,\varphi^*(x))=p(t^m,\psi(t))=0$.
Therefore the graph $\{(x,\varphi^*(x)):x\in\dom(\varphi^*)\setminus\{0\}\}$ lies in the algebraic curve $A$ and being a connected subset of $A\cap\e\bar K_\circ$ lies in a unique branch $C^*$, which is called the {\em conjugate $\e$-branch} of the $\e$-branch $C$. Observe that the conjugate branch $C^*$ is an east $\e$-elementary curve if $m$ is even and west if $m$ is odd.

By analogy we can define conjugate branches of north, west and south $\e$-branches of the algebraic curve $A$. Since the conjugate Pusieux analytic curves are not equal, the conjugated $\e$-branches of $A$ are disjoint.

So, the intersection $A\cap \e\bar K_\circ$ decomposes into the union of conjugated branches and hence contains an even number of connected components. This is a crucial observation which will be used in the proof of the inequality $\dim \bar\Gamma(\F)\ge 2$ in Theorem~\ref{main1}.

\subsection{Degree of maps between circles}

In this section we recall some basic information about the degree of maps between circles. Since the degree has topological nature, instead of the circle we can consider the boundary $K_\partial$ of the square $K=(-1,1)^2$ in the plane $\IR^2$.

We assume that the reader knows Elements of Singular Homology Theory with coefficients in an abelian group $G$ at the level of Chapter 2 of Hatcher's monograph \cite{Hat}. In particular, we assume that the reader knows the definition of the first homology group $H_1(X;G)$ of a topological space $X$ and also that each continuous map $f:X\to Y$ induces a homomorphism $f_*:H_1(X;G)\to H_1(Y;G)$ of the corresponding homology groups. It is well-known that the first homology group $H_1(K_\partial;G)$ of the (topological) circle $K_\partial$ is isomorphic to the group $G$, see \cite[p.153]{Hat}. In particular, for the infinite cyclic group $G=\IZ$ the first homology group $H_1(K_\partial;\IZ)$ is isomorphic to $\IZ$.

Observe that each homomorphism $h:\IZ\to\IZ$ is of the form $h(x)=d\cdot x$ for some integer number $d$ called the {\em degree} of the homomorphism $h$.
By the {\em degree}  of a continuous map $f:K_\partial \to K_\partial$ we understand the degree of the induced homomorphism $f_*:H_1(K_\partial;\IZ)\to H_1(K_\partial;\IZ)$.

For the coefficient group $\IZ_2=\IZ/2\IZ$ the situation simplifies. There are only two homomorphisms from $\IZ_2$ to $\IZ_2$: identity and trivial (or annulating). So, we call a map $f:X\to Y$ between topological circles {\em $\IZ_2$-trivial} if the induced homomorphism $f_*:H_1(X;\IZ_2)\to H_1(Y;\IZ_2)$ is trivial.
It is easy to see that a map $f:K_\partial\to K_\partial$ is $\IZ_2$-trivial if and only if it has even degree.

We shall need the following fact whose proof can be found in \cite[\S2.2]{Hat}.

\begin{lemma}\label{l:degree} A map $f:K_\partial\to K_\partial$ has even degree and is $\IZ_2$-trivial if for some point $y\in K_\partial$ the preimage $f^{-1}(y)$ has finite even cardinality and each point $x\in f^{-1}(y)$ has a neighborhood $U_x\subset K_\partial$ such that the restriction $f|U_x:U_x\to K_\partial$ is monotone.
\end{lemma}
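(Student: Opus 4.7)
My plan is to reduce to the standard local degree formula of \cite[Prop.~2.30]{Hat}, which writes the degree of $f$ as a sum of local degrees over the preimages of a regular value, and to show that the hypothesis forces each local degree to be $\pm 1$.

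The first step will be to localize. Enumerating $f^{-1}(y)=\{x_1,\dots,x_{2n}\}$, I would shrink the given neighborhoods $U_i$ into pairwise disjoint open arcs on which $f$ is still monotone and which contain no other preimage of $y$. A compactness argument applied to $K_\partial\setminus\bigcup_i U_i$ (a compact set whose image under $f$ misses $y$) then yields a small open arc $V\ni y$ with $f^{-1}(V)\subset\bigcup_i U_i$. Replacing each $U_i$ by the connected component $W_i$ of $U_i\cap f^{-1}(V)$ containing $x_i$, and shrinking $V$ once more so that $f|U_i$ does not wrap all the way around $K_\partial$ inside $V$, I obtain pairwise disjoint open arcs $W_i$ with $f^{-1}(V)=\bigsqcup_i W_i$ and with each restriction $f|W_i\colon W_i\to V$ a monotone continuous map between arcs satisfying $(f|W_i)^{-1}(y)=\{x_i\}$.

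At this point the local degree formula gives
$$\deg f=\sum_{i=1}^{2n}\deg_{x_i}(f),$$
where $\deg_{x_i}(f)$ is the integer through which $f_*$ acts on $H_1(W_i,W_i\setminus\{x_i\};\IZ)\cong\IZ$, targeted at $H_1(V,V\setminus\{y\};\IZ)\cong\IZ$. The key observation will be that each of these local degrees equals $\pm 1$: identifying $W_i$ and $V$ with open intervals in $\IR$, the paper's own characterization of monotone maps of intervals makes $f|W_i$ a non-decreasing or non-increasing continuous function, and the equation $(f|W_i)^{-1}(y)=\{x_i\}$ then forces $f|W_i$ to send the two components of $W_i\setminus\{x_i\}$ into the two \emph{different} components of $V\setminus\{y\}$. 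A short computation with the long exact sequence of the pair $(W_i,W_i\setminus\{x_i\})$ then yields $\deg_{x_i}(f)\in\{+1,-1\}$.

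Adding up, $\deg f$ is a sum of $2n$ numbers each equal to $\pm 1$, and is therefore an even integer. Reducing $f_*:H_1(K_\partial;\IZ)\to H_1(K_\partial;\IZ)$ modulo $2$ then shows $f_*:H_1(K_\partial;\IZ_2)\to H_1(K_\partial;\IZ_2)$ is the zero homomorphism, i.e., $f$ is $\IZ_2$-trivial. The one step where I expect to have to pay close attention is the reduction from the given monotone neighborhood $U_i$ to a subarc $W_i$ that maps monotonically \emph{into a single small arc} $V$ (in particular ruling out that $f|U_i$ wraps around the circle); once that step is in place, the local degree computation and the parity argument are routine.
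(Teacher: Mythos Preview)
Your proposal is correct and is precisely the argument the paper has in mind: the paper does not prove this lemma but simply refers to \cite[\S2.2]{Hat}, and Proposition~2.30 there is exactly the local degree formula you invoke. Your reduction---shrinking each $U_i$ so that $f(U_i)$ lies in a small arc, identifying monotone with non-decreasing/non-increasing there, and concluding that each local degree is $\pm1$---is the standard way to carry out that reference, so there is nothing to add.
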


Another result that will be used in the proof of Theorem~\ref{main1} is the addition formula for degrees, which in $\IZ_2$-case looks as follows:

\begin{lemma}\label{l:2.5} Let $Z$ be a finite subset of the open square $K=(-1,1)^2$ in the plane $\IR^2$ endowed with the max-norm, and $\e>0$ be a number such that $K_\partial\cap \bar B(Z,\e)=\emptyset$ and $\bar B(z,\e)\cap\bar B(z',\e)=\emptyset$ for distinct points $z,z'\in Z$. Let $f:\bar K\setminus B(Z,\e)\to K_\partial$ be a continuous map. If for every $z\in Z$ the restriction $f|S(z,\e):S(z,\e)\to K_\partial$ is a $\IZ_2$-trivial map, then the restriction $f|K_\partial:K_\partial\to K_\partial$ also is $\IZ_2$-trivial.
\end{lemma}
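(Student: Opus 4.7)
The plan is to translate the statement into a routine computation in singular $\IZ_2$-homology, using Hatcher's Chapter~2 framework referenced above. Set $X=\bar K\setminus B(Z,\e)$. By the disjointness hypotheses on the balls $\bar B(z,\e)$ and their separation from $K_\partial$, the space $X$ is a compact planar region, indeed a compact topological $2$-manifold with boundary whose boundary is the disjoint union
$$\partial X=K_\partial\sqcup\bigsqcup_{z\in Z}S(z,\e).$$
Let $\iota:K_\partial\hookrightarrow X$ and $\iota_z:S(z,\e)\hookrightarrow X$ denote the inclusions, and for any topological circle $C$ let $[C]\in H_1(C;\IZ_2)\cong\IZ_2$ denote the non-trivial element.

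The key step is to establish the homological identity
\begin{equation}\label{eq:sum}
\iota_*[K_\partial]\;=\;\sum_{z\in Z}(\iota_z)_*[S(z,\e)] \qquad\text{in }H_1(X;\IZ_2).
\end{equation}
I would prove this by choosing any finite triangulation of the compact $2$-manifold-with-boundary $X$ (which exists because $X$ is a polygon with finitely many open disks removed, and can be triangulated explicitly by radial cuts). Let $c\in C_2(X;\IZ_2)$ be the sum of all $2$-simplices of this triangulation. Working mod~$2$, every interior edge is shared by exactly two $2$-simplices and so cancels in $\partial c$, while every edge lying in $\partial X$ appears in exactly one $2$-simplex; hence $\partial c$ is a $1$-cycle representing the sum of the oriented boundary circles, which mod~$2$ is the class of $K_\partial+\sum_{z\in Z}S(z,\e)$ in $C_1(X;\IZ_2)$. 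Consequently this sum is a boundary in $X$, which is exactly \eqref{eq:sum}. (An equivalent quicker derivation: $X$ deformation retracts onto a wedge of $|Z|$ circles, one around each removed ball, and under the corresponding identification $H_1(X;\IZ_2)\cong\IZ_2^{|Z|}$ each $(\iota_z)_*[S(z,\e)]$ is a standard generator while $\iota_*[K_\partial]$ equals their sum, being homotopic in $X$ to a concatenation of small loops around each $z\in Z$.)

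Once \eqref{eq:sum} is in hand, the conclusion follows by functoriality. The continuous map $f:X\to K_\partial$ induces $f_*:H_1(X;\IZ_2)\to H_1(K_\partial;\IZ_2)$, and the two restrictions factor as $f|K_\partial=f\circ\iota$ and $f|S(z,\e)=f\circ\iota_z$. Applying $f_*$ to \eqref{eq:sum} we obtain
$$(f|K_\partial)_*[K_\partial]\;=\;f_*\iota_*[K_\partial]\;=\;\sum_{z\in Z}f_*(\iota_z)_*[S(z,\e)]\;=\;\sum_{z\in Z}(f|S(z,\e))_*[S(z,\e)]\;=\;0,$$
since each summand vanishes by the $\IZ_2$-triviality hypothesis. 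Thus $(f|K_\partial)_*:H_1(K_\partial;\IZ_2)\to H_1(K_\partial;\IZ_2)$ sends the generator to zero, i.e.~$f|K_\partial$ is $\IZ_2$-trivial.

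The only non-formal step is \eqref{eq:sum}; I expect that to be the main obstacle to a completely elementary write-up, since one must commit to a specific tool (triangulation, Mayer--Vietoris after capping the inner boundary circles with disks to recover the contractible $\bar K$, or deformation retraction onto a planar graph). Of these, the Mayer--Vietoris approach seems cleanest for the paper's style: cap each $S(z,\e)$ with a disk to obtain $\bar K$, and read off from the resulting long exact sequence (using $H_1(\bar K;\IZ_2)=0=H_2(\bar K;\IZ_2)$) that $H_1(X;\IZ_2)$ is freely generated mod~$2$ by the inner boundary classes, with $\iota_*[K_\partial]$ equal to their sum.
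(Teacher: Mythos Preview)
Your proof is correct and follows essentially the same route as the paper's: both establish that the $1$-cycle $[K_\partial]-\sum_{z\in Z}[S(z,\e)]$ bounds a singular $2$-chain in $X=\bar K\setminus B(Z,\e)$, and then push forward by $f_*$ using the $\IZ_2$-triviality hypothesis on each inner circle. The paper simply asserts this bounding step (``It is easy to show that the $1$-cycle $\sigma-\sum_{z\in Z}\sigma_z$ is equal to the boundary of some singular $2$-chain in $X$''), whereas you spell out several ways to justify it (explicit triangulation, deformation retraction onto a wedge of circles, or Mayer--Vietoris); any of these is fine and your write-up is, if anything, more complete.
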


\begin{proof} Let $\sigma:[0,1]\to K_\partial$ be a continuous map such that $\sigma(0)=\sigma(1)$ and $\sigma|[0,1):[0,1)\to K_\partial$ is bijective. By \cite[2.23]{Hat}, its homology class $[\sigma]$ is a generator of the homology group $H_1(K_\partial;\IZ_2)$, which is isomorphic to $\IZ_2$.

Let $X=\bar K\setminus B(Z,\e)$ and $f_*:H_1(X;\IZ_2)\to H_1(K_\partial;\IZ_2)$ denote the homomorphism between the first homology groups, induced by the map $f:X\to K_\partial$.
Let $i:K_\partial \to X$ denote the identity embedding.

For every $z\in Z$ consider the singular simplex $\sigma_z:[0,1]\to S(z,\e)$, $\sigma_z:t\mapsto z+\e\sigma(t)$, whose homology class is a generator of the homology group $H_1(S(x,\e);\IZ_2)$ which is isomorphic to $\IZ_2$. Since the composition  $f|S(z,\e):S(z,\e)\to K_\partial$ is $\IZ_2$-trivial, $f_*([\sigma_z])=0$.

It is easy to show that the 1-cycle $\sigma-\sum_{z\in Z}\sigma_z$ is equal to the boundary of some singular 2-chain in $X$. Consequently, $f_*([\sigma])=\sum_{z\in Z}f_*([\sigma_z])=0$ and $f_*\circ i_*=0$, which means that the map $f|K_\partial=f\circ i$ is $\IZ_2$-trivial.
\end{proof}

\section{Resolving the singularity of a rational vector-function}

In this section given a finite non-empty family $\F\subset\IR(x,y)$ thought as a rational vector-function, we study the local structure of its canonical multi-valued extension $\bar\F:\IR^2\setmap \bar\IR^\F$ at a neighborhood of an arbitrary point $(a,b)\in\IR^2$. We lose no generality assuming the point $(a,b)$ coincides with the  origin $\mathbf 0=(0,0)$ of the plane $\IR^2$.

The principal result of this section is the following structure theorem.

\begin{theorem}\label{local} Let $\F\subset\IR(x,y)$ be a non-empty finite family of rational functions and $\bar \F:\IR^2\setmap \bar\IR^\F$ be its graphoid extension. There is $\tilde\e>0$ such that for every  $\e\in(0,\tilde \e]$ there is a homeomorphism $h:\e\bar K\setminus\frac\e2\bar K\to\e\bar K_\circ$ such that
\begin{enumerate}
\item $\e\bar K_\circ\subset\dom(\F)$.
\item $h|\e K_\partial=\mathrm{id}$.
\item For every $f\in \F$ the composition $f\circ h:\e\bar K\setminus \frac\e2\bar K\to\bar\IR$ has a continuous extension $\bar f_h:\e\bar K\setminus\tfrac\e2K\to\bar\IR$.
\item The functions $\bar f_h$, $f\in\F$, compose a continuous extension $$\bar \F_h=(\bar f_h)_{f\in \F}:\e\bar K\setminus\tfrac\e2K\to\bar\IR^\F$$ of $\F\circ h$ such that $\bar \F_h(\frac\e2K_\partial)=\bar\F(\mathbf 0)$.
\item There is a finite subset $B_0$ of $\e K_\partial$ containing the set $\{-\e,\e\}^2$ of vertices of $\e K$ such that for any neighbor points $a,b$ of $\frac12B_0$ and every  $f\in \F$ the restriction $\bar f_h|[a,b]:[a,b]\to \bar\IR$ is monotone and the image $\bar f_h([a,b])$ lies in one of the segments $[0,1]$, $[-1,0]$, $[1,\infty]$, $[\infty,-1]$ composing the circle $\bar\IR$.
\item The set $\bar\F(\mathbf 0)$ is either a singleton or a finite union of monotone arcs in $\bar\IR^\F$.
\item For any continuous map $g:\bar\F(\mathbf 0)\to K_\partial$ the composition $g\circ\bar\F_h|\frac\e2K_\partial:\frac\e2K_\partial\to K_\partial$ is $\IZ_2$-trivial.
\end{enumerate}
\end{theorem}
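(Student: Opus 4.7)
The idea is to resolve the singularity at $\mathbf 0$ using the Pusieux-analytic structure from Theorem~\ref{Pus}. For each $f \in \F$ write $f = p_f/q_f$ in lowest terms, and let $A \subset \IR^2$ be the union of the algebraic curves $p_f^{-1}(0)$, $q_f^{-1}(0)$, and the level sets $f^{-1}(c)$ for $c \in \{0, \pm 1, \infty\}$, taken over all $f \in \F$. Choose $\tilde\e > 0$ that is $A$-small and satisfies $\tilde\e \bar K_\circ \subset \dom(\F)$, giving (1). Fix $\e \in (0, \tilde\e]$. By Theorem~\ref{Pus}, $A \cap \e \bar K_\circ$ decomposes into finitely many $\e$-elementary branches $C_1, \dots, C_{2n}$ grouped into conjugate pairs, and by Lemma~\ref{l:conjug} each $f \in \F$ has a common well-defined limit along each conjugate pair, giving limit vectors $L_1, \dots, L_{2n} \in \bar\IR^\F$ with $L_i = L_{i^*}$ whenever $C_i$ and $C_{i^*}$ are conjugate.

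I would then construct $h$ as follows. Enumerate the branches $C_1, \dots, C_{2n}$ in cyclic order around $\mathbf 0$, and let $P_i$ denote a point of $C_i$ chosen on the inner square $\tfrac\e2 K_\partial$ (after a radial isotopy if necessary). The points $P_1, \dots, P_{2n}$ subdivide $\tfrac\e2 K_\partial$ into $2n$ arcs; set $h$ equal to the identity on $\e K_\partial$ (giving (2)), send each inner arc to the sector of $\e\bar K_\circ$ bordered by the two corresponding branches, and extend $h$ radially through each sector to a homeomorphism of the annulus onto the punctured square. Each $f \circ h$ then extends continuously to $\tfrac\e2 K_\partial$ by assigning the Pusieux limit $\pr_f(L_i)$ at $P_i$ and interpolating monotonically across each arc, establishing (3)--(4) with $\bar\F_h(\tfrac\e2 K_\partial) = \bar\F(\mathbf 0)$. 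On each arc the trajectory of $\bar\F_h$ traces a monotone arc (or a single point) in $\bar\IR^\F$ joining two consecutive $L_i$'s, proving (6). Take $B_0 \subset \e K_\partial$ to consist of $\{\pm\e\}^2$ together with the intersection points of $\e K_\partial$ with every level curve $f^{-1}(\{0, \pm 1, \infty\})$, $f \in \F$; by construction, on each chord $[a,b]$ between neighbor points of $\tfrac12 B_0$, no level curve of any $f$ is encountered, and the Pusieux analycity ensures monotonicity of each $\bar f_h$ and confinement to a single quarter-circle of $\bar\IR$, giving (5).

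The delicate property (7) I would obtain by applying Lemma~\ref{l:degree} to the composition $\psi = g \circ \bar\F_h|\tfrac\e2 K_\partial$. Given $g:\bar\F(\mathbf 0)\to K_\partial$, pick $y\in K_\partial$ off the finite set $g(\{L_1,\dots,L_{2n}\})$ and off the critical values of $g$ restricted to each monotone arc of $\bar\F(\mathbf 0)$; then the preimage $\psi^{-1}(y)$ consists of finitely many interior points of the arcs of $\tfrac\e2 K_\partial$, and at each of them $\psi$ is locally monotone by (5) and (6). The crux is that this preimage has even cardinality. The conjugate pairing identifies the limit vectors ($L_i = L_{i^*}$), and this real-algebraic symmetry forces the cyclic trajectory $\bar\F_h|\tfrac\e2 K_\partial$ to traverse each topological arc of $\bar\F(\mathbf 0)$ an even number of times, so every regular fibre is even. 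Combined with the monotonicity, this is exactly the hypothesis of Lemma~\ref{l:degree}, yielding $\IZ_2$-triviality. The main obstacle is precisely this evenness claim, which requires a careful bookkeeping of how conjugate branches are interleaved in the cyclic ordering and how the sector-arcs interpolating between consecutive limit vectors cover each arc of $\bar\F(\mathbf 0)$; ensuring the level curves of $f^{-1}(\{0,\pm1,\infty\})$ are included in $A$ and that $\tilde\e$ is small enough keeps this combinatorics under control.
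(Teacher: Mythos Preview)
Your overall strategy—use the Pusieux branch structure and the conjugate pairing—is the right one, but there are two genuine gaps.

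First, your single-step construction of $h$ does not yield the continuous extension claimed in (3)--(4). With only the level curves $f^{-1}(\{0,\pm1,\infty\})$ in $A$, each $f$ is merely confined to one quarter of $\bar\IR$ on a sector between consecutive branches; it need not have a limit as the sector shrinks to $\mathbf 0$. For instance, take $f=x^2/y$: the region $0<y<x^2$ is bounded by the branch $y=0$ of $f^{-1}(\infty)$ and the branch $y=x^2$ of $f^{-1}(1)$, yet along $y=tx^2$ we have $f\equiv 1/t$, so $f$ attains every value in $(1,\infty)$ arbitrarily close to $\mathbf 0$. A ``radial'' or affine $h$ on this sector therefore gives no continuous boundary extension, and you cannot simply \emph{assign} boundary values by interpolation—they are forced by the limit of $f\circ h$. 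The paper handles this by building $h$ recursively across nested annuli $\delta_n\bar K\setminus\delta_{n+1}K$, using at stage $n$ the branches of $A_n\supset\bigcup_{f}f^{-1}(C_n)$ for an increasingly fine net $C_n\subset\bar\IR$; uniform continuity of $f\circ h$ then follows because on each layer the oscillation is bounded by the mesh of $C_n$. (The paper also puts the zero sets of $\partial f/\partial x$ and $\partial f/\partial y$ into $A_0$, which is what actually forces monotonicity in (5); Pusieux analyticity alone does not.)

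Second, your argument for (7) hand-waves at exactly the point where the paper spends the most effort. Knowing $L_i=L_{i^*}$ only says the \emph{endpoints} of the boundary trajectory pair up; it does not say the trajectory traverses each arc of $\bar\F(\mathbf 0)$ an even number of times. Conjugate branches $C_i,C_{i^*}$ need not be cyclically adjacent, and the sector-arcs joining $L_i$ to its cyclic neighbours can cover different portions of $\bar\F(\mathbf 0)$ than those joining $L_{i^*}$ to its neighbours. The paper's proof (Lemma~\ref{l:even}) proceeds differently: it first refines $\frac\e2K_\partial$ to a finite set $D$ so that intervals with overlapping $\bar\F_\partial$-images are $\F$-coherent (this refinement stabilises after finitely many steps by a K\"onig's Lemma argument), then homotopes $g\circ\bar\F_\partial$ to a map $\tilde g$ built from linear functionals $\lambda_E$. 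For each coherence class $E$ and each generic value $y$, the evenness of $(\lambda_E\circ\mu_E\circ\bar\F_h|\bar E)^{-1}(y)$ is obtained by constructing a \emph{new} rational function $\mathcal R=\lambda_E\circ\mu_E\circ\F$ and analysing the branches of the algebraic curve $\mathcal R^{-1}(y)$: the even count comes from the conjugate pairing of branches of \emph{this} auxiliary curve, together with the fact (Claim~\ref{cl:l7d}) that conjugate branches of $\mathcal R^{-1}(y)$ land in the same class $E$. Your sketch supplies none of this machinery, and I do not see a way to bypass it.
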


\begin{proof} We lose no generality assuming that all functions $f\in\F$ are not constant. Observe that for each rational function $f=\frac{p}{q}\in\F\subset\IR(x,y)$ the set $\IR^2\setminus\dom(f)\subset p^{-1}(0)\cap q^{-1}(0)$ is finite according to the classical theorem of B\'ezout \cite[6.1]{BriKno}  (which says that for two relatively prime polynomials $p,q\in\IR(x,y)$ the algebraic curves $p^{-1}(0)$ and $q^{-1}(0)$ have finite intersection).
This implies that the set $\dom(\F)=\bigcap_{f\in\F}\dom(f)$ is cofinite in $\IR^2$ (i.e., has finite complement in $\IR^2$).

In the family $\F$ consider the subfamilies:
\begin{itemize}
\item $\F_x$ of rational functions $f\in\F$ with non-zero partial derivative $f_x=\frac{\partial f}{\partial x}$;
\item $\F_y$ of rational functions $f\in\F$ with non-zero partial derivative $f_y=\frac{\partial f}{\partial y}$.
\end{itemize}

Let $C_0=\{0,1,-1,\infty\}$, $X=\{(x,y)\in\IR^2:x^2=y^2\}$, and consider the algebraic curve
$$A_{0}=X\cup\bigcup_{f\in \F_{\kern-1pt x}}f_x^{-1}(0)\cup \bigcup_{f\in\F_{\kern-1pt y}}f_y^{-1}(0)\cup\bigcup_{f\in\F}f^{-1}(C_0).$$

Using Theorem~\ref{Pus}, choose an $A_{0}$-small number $\tilde \e\in (0,1)$ such that $\tilde \e\bar K_\circ\subset\dom(\F)$. For this number $\tilde \e$ the intersection $A_{0}\cap\tilde \e\bar K_\circ$ decomposes into even number of pairwise disjoint $\tilde \e$-elementary curves.
Since the set $\IR^2\setminus \dom(\F)$ is finite, we can assume that $\tilde \e$ is so small that $\tilde \e K_\circ\subset \dom(\F)$.
Now, given any real number $\e\in(0,\tilde\e]$ we shall construct a homeomorphism $h:\e\bar K\setminus\frac\e 2K\to\e K_\circ$ that satisfies the conditions (1)--(7) of Theorem~\ref{local}.

Let us recall that $d$ stands for the metric on the extended real line $\bar\IR$ identified with the unit circle in the complex plane via the stereographic projection. This metric induced the max-metric
$$d^\F\big((x_f),(y_f)\big)=\max_{f\in\F}d(x_f,y_f)$$on the $\F$-torus $\bar\IR^\F$.

Using Theorem~\ref{Pus}, by induction we can construct a sequence of algebraic curves $(A_n)_{n=1}^\infty$, a sequence of real numbers $(\e_n)_{n=1}^\infty$ and a sequence of finite subsets $(C_n)_{n\in\w}$ of $\bar\IR$ such that for every $n\in\IN$ the following conditions hold:
\begin{enumerate}
\item $0<\e_{n}<\min\{\e_{n-1},2^{-n}\}$;
\item the number $\e_n$ is $A_n$-small;
\item the set $C_{n+1}$ contains $C_n$ and is a finite $2^{-n}$-net in $(\bar \IR,d)$;
\item $A_{n+1}=A_n\cup\bigcup\limits_{f\in \F}f^{-1}(C_{n+1})$.
\end{enumerate}

Let $\e_0=\e$. Now we are ready to construct a homeomorphism $h:\e\bar K\setminus\frac\e2 \bar K\to\e\bar K_\circ$ required in Theorem~\ref{local}. This homeomorphism will be defined recursively with help of the algebraic curves $A_n$, $n\in\w$.

For every $n\in\w$ consider the finite set $B_n=A_n\cap\e_n K_\partial$ in the boundary of the square $\e_n K$. It follows from $X\subset A_0$ that the set $B_0$ contains the set $\{-\e_n,\e_n\}^2$ of vertices of the square $\e_n K$.
For each point $b\in B_n$ there is a unique $\e_n$-elementary branch $C_b$ of the algebraic curve $A_n$ such that $\{b\}=C_b\cap A_n$.
\smallskip

For every $n\in\w$ let $\delta_n=\frac\e2+\frac{\e_n}2$ and observe that $\lim_{n\to\infty}\delta_n=\frac\e2$.
\smallskip

Let $h_{-1}$ be the identity map of $\e K_\partial$. By induction, for every $n\in\w$ we shall define a subset $B_n'\in\delta_n K_\partial$ and a homeomorphism $h_n:\delta_n\bar K\setminus \delta_{n+1}K\to \e_n\bar K\setminus \e_{n+1}K$ such that:
\begin{enumerate}
\item[(5)] $h_n\big((\frac\e2+\frac t2)K_\partial\big)=t K_\partial$ for each $t\in[\e_{n+1},\e_n]$;
\item[(6)] $B_{n}'=h_{n-1}^{-1}(B_{n})\subset\delta_n K_\partial$;
\item[(7)] for any $b'\in B_n'$ we get $h_n([\frac{\delta_{n+1}}{\delta_n},1]b')=C_{b}\setminus \e_{n+1}K$ where $b=h_{n-1}(b')\in B_n$;
\item[(8)] for any neighbor points $a,b\in B_n'$ and any $t\in[\frac{\delta_{n+1}}{\delta_n},1]$ the map $h_n|[ta,tb]$ is affine, which means that $h_n((1-u)ta+utb)=(1-u)h_n(ta)+uh_n(tb)$ for all $u\in[0,1]$.
\end{enumerate}

The conditions (6)--(8) imply that for every $n\in\w$ we get $h_{n-1}|\delta_n K_\partial=h_n|\delta_n K_\partial$. So, we can define a homeomorphism $h:\e\bar K\setminus\frac\e2\bar K\to\e\bar K_\circ$ letting $h|\delta_n\bar K\setminus \delta_{n+1}K=h_n$ for all $n\in\w$. The properties (5)--(8) of the homeomorphisms $h_n$ imply that the homeomorphism $h$ has the following properties for every $n\in\w$:
\begin{enumerate}
\item[(9)] $h_n\big(\frac\e2+\frac t2)K_\partial)=t K_\partial$ for each $t\in(0,\e]$;
\item[(10)] $B_{n}'=h^{-1}(B_{n})$;
\item[(11)] for any $b'\in B_n'\subset\delta_n K_\partial$ we get $h((\frac{\e}{2\delta_n},1]b')=C_{b}$ where $b=h(b')\in B_n$;
\item[(12)] for any neighbor points $a,b$ of $B_n'$ and any $t\in[\frac{\delta_{n+1}}{\delta_n},1]$ the map $h|[ta,tb]$ is affine.
\end{enumerate}

Moreover the choice of the algebraic curve $A_0$ guarantees that for any neighbor point $a,b\in B_0=B_0'$, any $t\in(1/2,1]$, and any function $f\in \F$
\begin{itemize}
\item[(13)] the restriction $f\circ h|[ta,tb]$ is either constant or injective (this follows from $f_x^{-1}(0)\cup f_y^{-1}(0)\subset A_0$) and
\item[(14)] the image $f\circ h([ta,tb])$ lies in one of segments $[0,1]$, $[-1,0]$, $[1,\infty]$, $[\infty,-1]$ composing the projective line $\bar\IR$ (this follows from $f^{-1}(\{0,1,-1,\infty\})\subset A_0$).
\end{itemize}

Now we shall prove the statements (1)--(7) of Theorem~\ref{local}. In fact, the statements (1) and (2) follow from the choice of $\e=\e_0$ and the definition of $h|\e K_\partial=h_{-1}$. The other statements will be proved in a series of claims and lemmas.

In the following claim (that proves the statement (3) of Theorem~\ref{local}) on the plane $\IR^2$ we consider the metric
 $$\rho\big((x,y),(x',y')\big)=\max\{|x-x'|,|y-y'|\}$$generated by the norm $\|(x,y)\|=\max\{|x|,|y|\}$. In this metric the square $K$ is just the open unit ball centered at $\mathbf 0$.

\begin{claim}\label{cl:t3} For every $f\in\F$ the map $f\circ h:\e\bar K\setminus\frac\e2\bar K\to \bar\IR$ is uniformly continuous and hence admits a continuous extension $\bar f_h:\e\bar K\setminus\frac\e2 K\to \bar\IR$.
\end{claim}

\begin{proof} Given any $\eta >0$, we should find $\tau>0$ such that for any two points $x,x'\in \e\bar K\setminus\frac\e2\bar K$ with $\rho(x,x')<\tau$ we get $d(f\circ h(x),f\circ h(x'))<\eta$. Choose a natural number $m\in N$ such that $2^{-m+2}<\eta$. By the uniform continuity of the function  $f$ on the compact set $\e\bar K\setminus \delta_{m+1} K$, there exists a real number $\tau_1 >0$ such that for any points $(x, x')  \in \e\bar K\setminus \delta_{m+1} K$ with $\rho(x,x')<\tau_1$ we have $d(f\circ h(x), f\circ h(x'))<\eta$. Let $\tau_2=\delta_m-\delta_{m+1}$ be equal to the smallest distance between the squares $\delta_m K_{\partial}$ and $\delta_{m+1} K_{\partial}$.

Now let us consider the finite set $\frac{\e}{2\delta_m} B_m'\subset \frac {\e}2 K_{\partial}$ and put $$\tau_3=\min\big\{\rho(a',b'): a',b' \in \tfrac \e{2\delta_m} B_m', a'\neq b'\big\}.$$ We claim that the number $\tau=\min\{\tau_1,\tau_2,\tau_3\}$ has the required property.

The choice of $\tau$ implies that any two points $x,x'\in \e\bar K\setminus \frac\e2\bar K$ with $\rho(x,x')<\tau$ either both lie in $\e\bar K\setminus \delta_{m+1} K$ (and by the definition of $\tau_1$ this implies that $d(f\circ h(x), f\circ h(x'))<\eta$), or they both lie in the same trapezoid $T_{ab}$, bounded by the lines $\delta_m K_{\partial}$, $\frac {\e}2 K_{\partial}$, $[\frac{\e}{2\delta_m}, 1]a, [\frac{\e}{2\delta_m}, 1]b$, where $a,b$ are neighbor points in $B_m'$, or, at least, in such two adjacent trapezoids. The interior $T_{ab}\setminus\partial T_{ab}$ of the trapezoid $T_{ab}$ is a connected set whose image $h(T_{ab}\setminus \partial T_{ab})$ does not intersect the algebraic curve $A_m$ while the image $f\circ h(T_{a,b}\setminus \partial T_{ab})$ does not intersect the $2^{-m+1}$-net $C_m$ in $\bar \IR$. Consequently, $\diam f\circ h(T_{ab})=\diam f\circ h(T_{ab}\setminus \partial T_{ab})<2^{-m+1}$ and $d(f\circ h(x),f\circ h(x'))\le 2\cdot 2^{-m+1}<\eta$.
\end{proof}

The functions $\bar f_h$, $f\in\F$, compose a continuous function $\bar \F_h=(\bar f_h)_{f\in\F}:\e\bar K\setminus\frac\e2K\to\bar\IR^\F$ that extends the composition $\F\circ h:\e\bar K\setminus\frac\e2\bar K\to\bar\IR^\F$.
Let $\bar\F_\partial=\bar\F_h|\frac\e2K_\partial$ be the restriction of $\bar\F_h$ onto the boundary square $\frac\e2K_\partial$. Also let $\bar h:\e\bar K\setminus\frac\e2K\to\e\bar K$ be the continuous extension of the homeomorphism $h$ and observe that $\bar h^{-1}(\bold 0)=\frac\e2 K_\partial$.

The following claim completes the proof of the statement (4) of Theorem~\ref{local}.

\begin{claim}\label{cl:t4} $\bar \F(\mathbf 0)=\bar \F_\partial(\frac \e2K_\partial)$ is a Peano continuum.
\end{claim}

\begin{proof}
First, we are going to show that $\bar \F(\mathbf 0)=\bar \F_\partial(\frac \e2K_\partial)$.

Let $y\in \bar \F(\mathbf 0)$. This means that there exists a sequence $\{x_n\}_{n\in\w}\subset \e K_\circ$ such that $(\mathbf 0,y)=\lim\limits_{n\to\infty}(x_n,\F(x_n))$. By the compactness of $\e\bar K\setminus\frac\e2K$, the sequence $\{h^{-1}(x_n)\}_{n\in\w}\subset \e\bar K\setminus\frac\e2 \bar K$ contains a subsequence $\{h^{-1}(x_{n_k})\}_{k\in\w}$ that converges to some point $z\in \frac\e2K_\partial=\bar h^{-1}(\bold 0)$. The continuity of the map $\bar \F_h$ guarantees that $$y=\lim_{k\to\infty}\F(x_{n_k})=\lim_{k\to\infty}\bar \F_h(h^{-1}(x_{n_k}))=\bar\F_h(z)\in\bar\F_\partial(\tfrac\e2K_\partial).$$
The converse inclusion $\bar\F_\partial(\frac\e2K_\partial)\subset\bar\F(\bold 0)$ is obvious. The equality $\bar\F(\bold 0)=\bar \F_\partial(\frac \e2K_\partial)$ and the continuity of $\bar\F_\partial$ implies that $\bar\F(\bold 0)$ is a Peano continuum.
\end{proof}

Now we prove the statement (5) of Theorem~\ref{local}. We recall that $B_0'=B_0=A_0\cap\e K_\partial$. The conditions (13), (14) imply the following:

\begin{claim}\label{cl:t5} For every $f\in \F$ and neighbor points $a,b$ of the set $\frac12B_0$ the map $\bar f_h|[a,b]$ is monotone and its image lies in one of the segments: $[0,1]$, $[-1,0]$, $[1,\infty]$, $[\infty,-1]$ composing the projective line $\bar\IR$.
\end{claim}

Claim~\ref{cl:t5} implies:

\begin{claim}\label{cl:t6a} For any neighbor points $a,b$ of the set $\frac12B_0$, the map $\bar \F_\partial|[a,b]$ is monotone and $\bar \F_\partial([a,b])$ either is a singleton or a monotone arc in $\bar\IR^\F$.
\end{claim}

Claim~\ref{cl:t6a} implies that for any neighbor points $a,b$ of the set $\frac12B_0$ and any point $y\in\bar\F_\partial([a,b])$ the preimage $(\bar \F_\partial|[a,b])^{-1}(y)$ is either a singleton or an arc. Let
$$Y_{a,b}=\{y\in \bar\F_h([a,b]):|(\bar \F_\partial|[a,b])^{-1}(y)|>1\}.$$
Since $[a,b]$ does not contain uncountably many disjoint arcs, the set $Y_{a,b}$ is at most countable and so is the set
$$Y=\bigcup\{Y_{a,b}:(a,b)\in\mathcal N(\tfrac12B_0)\}.$$
The definition of the set $Y$ implies:

\begin{claim}\label{cl:t7a} For each $y\in\bar\IR^\F\setminus Y$ the preimage $\bar\F_\partial^{-1}(y)$ is finite.
\end{claim}

The last statement of Theorem~\ref{local} is proved in the following lemma, which is the most difficult part of the proof of Theorem~\ref{local}.

\begin{lemma}\label{l:even} For any map $g:\bar \F_\partial(\frac\e2K_\partial)\to\frac\e2 K_\partial$ the composition $g\circ\bar\F_\partial:\frac\e2K_\partial\to \frac\e2K_\partial$ is $\IZ_2$-trivial.
\end{lemma}

\begin{proof} Since homotopic maps have the same degrees, it suffices to show that the map $g\circ\bar\F_\partial$ is homotopic to some $\IZ_2$-trivial map $\tilde g:\frac\e2K_\partial\to \frac\e2K_\partial$.
The construction of such a map $\tilde g$ is rather long and require some preliminary work, in particular, introducing some notation.

We recall that $\rho$ stands for the $\max$-metric on the plane $\IR^2$, $d$ denotes the metric of the projective line $\bar\IR$. The latter metric induces the $\max$-metric $d^\F$ on the $\F$-torus $\bar\IR^\F$.

By the uniform continuity of the map $g$ there is $\delta>0$ such that for any points $x,y\in\bar\F(\bold 0)=\bar\F_\partial(\frac\e2K_\partial)$ with $d^\F(x,y)\le\delta$ we get $\rho(g(x),g(y))<\e$.

Find $m\in\IN$ such that $2^{-m+1}<\delta$ and consider the $\delta$-net $C_m$ in $\bar\IR$ (which appeared in the construction of the homeomorphism $h$).
The definition of the set $C_m$ guarantees that $C_0=\{0,1,-1,\infty\}\subset C_m$.
The set $C_m$ induces the disjoint cover $$\C=\big\{\{c\}:c\in C_m\}\cup\{]a,b[:(a,b)\in \mathcal N(C_m)\}$$ of the projective line $\bar\IR$.

It follows that the closure $\bar C$ of each set $C\in\C$ is a connected subset that lies in one of the intervals:  $[0,1]$, $[0,-1]$, $[1,\infty]$, $[\infty,-1]$. So, we can endow each segment $\bar C$ with the linear order inherited from the extended real line $[-\infty,\infty]$.

Now for each set $C\in\C$ consider the rational homeomorphism $\mu_C:\bar \IR\to\bar\IR$ defined by formula:
$$
\mu_C(x)=\begin{cases}
x&\mbox{if $C\subset [0,1]$};\\
x+1&\mbox{if $C\subset [-1,0)$};\\
1-x^{-1}&\mbox{if $C\subset(1,\infty]$};\\
-x^{-1}&\mbox{if $C\subset(\infty,-1)$}.
\end{cases}
$$
Observe that $\mu_C(\bar C)\subset[0,1]$ and the restriction $\mu_C|\bar C:\bar C\to[0,1]$ is strictly increasing (with respect to the linear order on $\bar C$ inherited from $[-\infty,+\infty]$).

The cover $\C$ induces the disjoint cover $$\Pi\C^\F=\Big\{\prod_{f\in\F}C_f:(C_f)_{f\in\F}\in\C^\F\Big\}$$ of the $\F$-torus $\bar\IR^\F$ by cubes of various dimensions. Since $C_m$ is a $\delta$-net, for each cube $C\in\Pi\C^\F$ its closure $\bar C$ has diameter $<\delta$ (with respect to the metric $d^\F$). Consequently, the image $g(\bar C\cap\bar\F(\bold 0))$ has $\rho$-diameter $\diam\, g(\bar C\cap \bar\F(\bold 0))<\e$ and hence lies in some topological arc $$I_C\subset \frac\e2 K_\partial.$$

For each cube  $C=\prod_{f\in\F}C_f\in\Pi\C^\F$ consider the embedding $$\mu_{C}:\prod_{f\in\F}\bar C_f\to[0,1]^\F,\;\;\mu_{C}:(x_f)_{f\in\F}\mapsto (\mu_{C_f}(x_f))_{f\in\F}.$$
\smallskip

And now the final portion of definitions and notations which should be digested before the start of the proof of Lemma~\ref{l:even}.

A pair $(a,b)$ of distinct points of $\frac\e2K_\partial$ is called {\em $\F$-admissible} if $[a,b]\subset \frac\e2K_\partial$ and for every $f\in\F$ there is a (unique) set $C^f_{a,b}\in\C$ such that $\bar f_h\big({]a,b[}\big)\subset C^f_{a,b}$ and the restriction $\bar f_h|[a,b]:[a,b]\to\bar C^f_{a,b}$ is monotone. It is clear that the product $$C_{a,b}=\prod_{f\in\F}C^f_{a,b}\subset\bar\IR^\F$$is an element of the cover $\Pi\C^\F$ of $\bar\IR^\F$.

For each pair $(a,b)$ of $\F$-admissible points of $\frac\e2K_\partial$ consider the sets
$$
\begin{aligned}
\F_{a,b}^{<}&=\{f\in\F:\bar f_h(a)<\bar f_h(b)\},\\
\F_{a,b}^{>}&=\{f\in\F:\bar f_h(a)>\bar f_h(b)\},\\
\F_{a,b}^{=}&=\{f\in\F:\bar f_h(a)=\bar f_h(b)\},\\
\F_{a,b}^{\ne}&=\F\setminus\F^{=}_{a,b}=\F_{a,b}^{<}\cup\F_{a,b}^{>}.
\end{aligned}
$$

Two $\F$-admissible ordered pairs $(a,b)$, $(a',b')$ of neighbor points of the set $\frac\e2K_\partial$ are called
{\em $\F$-coherent} if  $$\F_{a,b}^{<}=\F_{a',b'}^{<},\;\;\F_{a,b}^{=}=\F_{a',b'}^{=},\;\;\F_{a,b}^{>}=\F_{a',b'}^{>}, \mbox{ \ and \ }C_{a,b}=C_{a',b'}.$$

Two unordered pairs $\{a,b\}$, $\{a',b'\}$ of neighbor points of the set $B$ are called {\em $\F$-coherent} if the ordered pair $(a,b)$ is $\F$-coherent either to $(a',b')$ or to  $(b',a')$.

It is easy to check that the $\F$-coherence relation is an equivalence relation on the family of $\F$-admissible (un)ordered pairs of points of $\frac\e2K_\partial$.

\begin{claim}\label{cl:D} There is a finite subset $D\subset\frac\e2K_\partial$ such that:
\begin{enumerate}
\item $\frac12B_0\subset D$.
\item Any pair $(a,b)$ of neighbor points of $D$ is $\F$-admissible.
\item Two unordered pairs $\{a,b\},\{a',b'\}$ of neighbor points of $D$ are $\F$-coherent provided $\bar \F_\partial\big(]a,b[\big)\cap \bar \F_\partial\big(]a',b'[\big)\ne\emptyset$.
\end{enumerate}
\end{claim}

\begin{proof} For every neighbor points $a,b$ of the set $\frac12B_0$,  consider the disjoint cover $\DD_{a,b}=\{[a,b]\cap\bar\F_h^{-1}(C):C\in\Pi\C^\F\}$ of the affine interval $[a,b]$ by convex subsets of $[a,b]$.
The convexity of the sets of the cover $\DD_{a,b}$ follows from the monotonicity of the maps $\bar f_h|[a,b]$, $f\in\F$.
 For every set $D\in\DD_{a,b}$ by $\partial D$ we denote the boundary of $D$ in $\frac\e2K_\partial$. Since $D$ is convex,
$|\partial D|\le 2$. Let $$D_0=\tfrac12B_0\cup\bigcup\big\{\partial D:D\in\DD_{a,b},\;(a,b)\in\mathcal N(\tfrac12B_0)\big\}.$$
It is easy to see that any two neighbor points of the set $D_0$ are $\F$-admissible.

Now let us consider an increasing sequence $(D_n)_{n\in\w}$ of finite subsets of $\frac\e2K_\partial$ defined by the following recursive procedure. Assume that for some $n\in\w$ a finite subset $D_n$ (containing the set $D_0$) has been constructed. For any two unordered pairs $\{a,a'\}$ and $\{b,b'\}$ of neighbor points of the set $D_n$ consider the convex hull $D_{a,a'}^{b,b'}$ of the set
$(\bar \F_\partial|[a,a'])^{-1}(\bar \F_\partial([b,b']))\subset[a,a']$ in the affine segment $[a,a']$ and its boundary $\partial D_{a,a'}^{b,b'}$ in $\frac\e2K_\partial$, which consists of at most two points.

\begin{claim}\label{cl:D1} $\bar \F_\partial(\partial D_{a,a'}^{b,b'})=\bar\F_\partial(\partial D_{b,b'}^{a,a'})$. If the intersection $\bar\F_{\partial}([a,a'])\cap\bar\F_\partial([b,b'])$ contains more than one point, then doubletons $\partial D_{a,a'}^{b,b'}$ and $\partial D_{b,b'}^{a,a'}$ are $\F$-coherent.
\end{claim}

\begin{proof} The claim is trivial if the intersection $Z=\bar\F_{\partial}([a,a'])\cap\bar\F_\partial([b,b'])$ contains at most one point.
So, assume that this intersection contains more than one point.
It follows that $Z\subset C_{a,a'}=C_{b,b'}$ and hence for each $f\in \F$ the sets $C_{a,a'}^f$ and $C_{b,b'}^f$ coincide and carry the same linear order.

Choose two points $y=(y_f)_{f\in \F}$ and $y'=(y_f')_{f\in\F}$ in $Z\subset\bar\IR^\F$ for which the set $\F_{y,y'}^{\ne}=\{f\in\F:y_f\ne y_f'\}$ has maximal possible cardinality. It is clear that $\F^{\ne}_{y,y'}=\F^{<}_{y,y'}\cup \F^{>}_{y,y'}$, where
$\F_{y,y'}^{<}=\{f\in\F:y_f<y_f'\}$ and $\F_{y,y'}^{>}=\{f\in\F:y_f>y_f'\}$.

Choose two points $x_a,x_a'\in[a,a']$ such that $y=\bar\F_\partial(x_a)$ and $y'=\bar\F_\partial(x_a')$. Exchanging the points $a,a'$ by their places, if necessary, we can assume that the intervals $[a,x_a]$ and $[x_a',a']$ have empty intersection. Choose unique points $z_a\in [a,x_a]$ and $z'_a\in[x_a',a']$ such that $\{z_a,z_a'\}=\partial D_{a,a'}^{b,b'}$. Since $[x_a,x_a']\subset [z_a,z_a']$, the monotonicity of the functions $\bar f_h|[a,a']$ implies that
$\F_{y,y'}^{\ne}\subset \F_{z_a,z_a'}^{\ne}$ and hence $\F_{y,y'}^{\ne}=\F_{z_a,z_a'}^{\ne}$ by the maximality of $\F_{y,y'}^{\ne}$. This fact, combined with the monotonicity of the functions $\bar f_h|[a,a']$ and the choice of the order of the points $a,a'$ implies that $\F^{<}_{z_a,z_a'}=\F^<_{y,y'}$ and $\F^{>}_{z_a,z_a'}=\F^{>}_{y,y'}$. Let $y_a=\bar \F_\partial(z_a)$ and $y_a'=\bar\F_\partial(z'_a)$.

Now do the same for the segment $[b,b']$: choose two points $x_b,x_b'\in[b,b']$ such that $y=\bar \F_\partial(x_b)$ and $y'\in \bar \F_\partial(x_b')$. Replacing the points $b,b'$ by their places, if necessary, we can assume that the intervals $[b,x_b]$ and $[x_b',b']$ have no common points. Choose unique points $z_b\in [b,x_b]$ and $z'_b\in[x_b',b']$ such that $\{z_b,z_b'\}=\partial D_{b,b'}^{a,a'}$ and let $y_b=\bar\F_\partial (z_b)$ and $y'_b=\bar\F_\partial(z_b')$.
It follows that $\F^{>}_{z_b,z_b'}=\F^{>}_{y,y'}$ and $\F^{<}_{z_b,z_b'}=\F^{<}_{y,y'}$.

We claim that $y_a=y_b$ and $y_a'=y_b'$.
Assume first that $y_a\ne y_b$. Find a point $u_a\in[z_a,z_a']$ such that $\bar\F_\partial(u_a)=y_b$ and a points $u_b\in[z_b,z_b']$ such that $\bar\F_\partial(u_b)=y_a$. Since $y_a\ne y_b$, there is a function $f\in \F$ such that $\pr_f(y_a)\ne \pr_f(y_b)$ where $\pr_f:\bar\IR^\F\to\bar\IR$ denotes the projection onto the $f$-th coordinate. On the set $\pr_f(Z)$ consider a linear order inherited from the set $C_{a,a'}^f=C_{b,b'}^f$. We lose no generality assuming that $\pr_f(y_a)<\pr_f(y_b)$. Then by the monotonicity of the function $\bar f_h|[a,a']$, we get
$$\bar f_h(z_a)=\pr_f(y_a)<\pr_f(y_b)=\bar f_h(u_a)\le\bar f_h(z_a')$$ and hence $f\in \F^{<}_{z_a,z_a'}=\F^{<}_{y,y'}$. On the other hand, the monotonicity of the function $\bar f_h|[b,b']$ implies
$$\bar f_h(z_b)=\pr_f(y_b)>\pr_f(y_a)=\bar f_h(u_b)\ge\bar f_h(z_b'),$$ and  $f\in\F^{>}_{z_b,z_b'}=\F^{>}_{y,y'}$. This is a desired contradiction that proves the equality $y_a=y_b$. By analogy we can prove the equality $y_a'=y_b'$.

Now we see that the equality
$$\bar\F_\partial(\partial D_{a,a'}^{b,b'})=\{y_a,y_a'\}=\{y_b,y_b'\}=\bar\F_\partial(\partial D_{b,b'}^{a,a'})$$implies that the doubletons $\partial D_{a,a'}^{b,b'}$ and $\partial D_{b,b'}^{a,a'}$ are $\F$-coherent.
\end{proof}

Define the set $D_{n+1}$ as the union
$$D_{n+1}=D_n\cup\bigcup\{\partial D_{a,a'}^{b,b'}: (a,a'),(b,b')\in\mathcal N(D_n)\}.$$

For every $n\in\w$ consider the function $p_n:\mathcal N(D_{n+1})\to\mathcal N(D_n)$ assigning to each ordered pair $(a,a')$ of neighbor points of the set $D_{n+1}$ a unique ordered pair  $(b,b')$ of neighbor points of $D_n$ such that $[a,a']\subset[b,b']$ and $[a,b]\cap[a',b']=\emptyset$. For $n\le m$ consider the composition $$p_n^m=p_n\circ\dots\circ p_{m-1}:\mathcal N(D_m)\to\mathcal N(D_n).$$

\begin{claim}\label{cl:D2} There is $n\in\w$ such that for any $m\ge n$ any pair $(a',b')\in\mathcal N(D_{m})$ is $\F$-coherent to the pair $(a,b)=p_{m-1}(a',b')$.
\end{claim}

\begin{proof} The proof of this claim relies on the K\"onig Lemma \cite[14.2]{JW2}, which  says that a tree $T$ is finite provided each element of $T$ has finite degree and each branch of $T$ is finite. Let us recall that a {\em tree} is a partially ordered set (poset) $(T,\le)$ with the smallest element such that for each $t\in T$, the set $\{s \in T : s \le t\}$ is well-ordered by the relation $\le$. For each $t\in T$, the order type of $\{s\in T : s \le t\}$ is called the {\em height} of $t$. The height of $T$ itself is the least ordinal greater than the height of each element of $T$. The {\em degree} of an element $t\in T$  is the number of immediate successors of $t$ in $T$. The root of a tree $T$ is the unique element of height 0. A {\em branch of a tree} $T$ is a maximal linearly ordered subset of $T$.

Now consider the tree $T=\{\emptyset\}\cup\bigcup_{n\in\w}\mathcal N(D_n)$. The partial order on $T$ is defined as follows. Given two vertices $(a,a')\in \mathcal N(D_n)$ and $(b,b')\in\mathcal N(D_m)$ of $T$, we write $(a,a')\le (b,b')$ if $n\le m$ and  $(a,a')=p^m_n(b,b')$. The set $\emptyset$ is the root of $T$ and is smaller that any other non-empty element of $T$. It is clear that each vertex of the tree $T$ has finite degree.

The monotonicity of the maps $\bar f_h|[a,a']$ for $(a,a')\in T$ implies the following fact:

\begin{claim}\label{cl:new} For any two vertices $(a,a')\le (b,b')$ of the tree $T$ we get $\F^{=}_{a,a'}\subset \F^{=}_{b,b'}$. Moreover, the pairs $(a,a')$ and $(b,b')$ are $\F$-coherent if and only if
$\F^{=}_{a,a'}=\F^{=}_{b,b'}$.
\end{claim}

Now consider the subtree $T'\subset T$ consisting of the root of $T$ and all pairs $(a,a')\in \mathcal N(D_n)\subset T$ that are not $\F$-coherent to some pair $(b,b')\in \mathcal N(D_{n+1})\subset T$ with $p_n(b,b')=(a,a')$. Claim~\ref{cl:new} implies that each branch of the tree $T'$ has finite length $\le |\F|+1$. By K\"onig Lemma, the subtree $T'$ is finite. Consequently, there is $n\in\IN$ such that $T'\cap \mathcal N(D_m)=\emptyset$ for all $m\ge n-1$. This implies that for every $m\ge n$, each pair $(a,a')\in\mathcal N(D_{m})$ is $\F$-coherent to the pair $(b,b')=p_{m-1}(a,a')$. This completes the proof of Claim~\ref{cl:D2}.
\end{proof}

Let $D=D_{n+1}$ where the number $n$ is taken from Claim~\ref{cl:D2}. It is clear that the set $D$ satisfies the conditions (1) and (2) of Claim~\ref{cl:D}. The condition (3) is verified in the following claim.

\begin{claim}\label{cl:D3} Two unordered pairs $\{a,a'\},\{b,b'\}\in\mathcal N\{D\}$ of neighbor points of the set $D=D_{n+1}$ are $\F$-coherent if $\bar\F_h\big(]a,a'[\big)\cap\bar\F_h\big(]b,b'[\big)\ne\emptyset$.
\end{claim}

\begin{proof} We shall consider two cases (and several subcases).

1. The intersection $\bar\F_h\big(]a,a'[\big)\cap\bar\F_h\big(]b,b'[\big)$ contains more than one point. By Claim~\ref{cl:D1}, the doubletons $\partial D_{a,a'}^{b,b'}$ and $\partial D_{b,b'}^{a,a'}$ are
$\F$-coherent. Take a pair of neighbor points $(a_{n+2},a_{n+2}')\in\mathcal N(D_{n+2})$  such that $]a_{n+2},a'_{n+2}[\;\subset D_{a,a'}^{b,b'}$ and $p_{n+1}(a_{n+2},a_{n+2}')=(a,a')$.
 The choice of the number $n$ guarantees that the pairs $(a_{n+2},a_{n+2}')$ and $(a,a')$ are $\F$-coherent. Taking into account that $[a_{n+2},a_{n+2}']\subset \conv(\partial D_{a,a'}^{b,b'})\subset[a,a']$, we conclude that the pair $\{a,a'\}$ is $\F$-coherent to the doubleton $\partial D_{a,a'}^{b,b'}$. By analogy we can prove that the pair $\{b,b'\}$ is $\F$-coherent to the doubleton $\partial D_{b,b'}^{a,a'}$. Now we see that the $\F$-coherence of the doubletons $\partial D_{a,a'}^{b,b'}$ and $\partial D_{b,b'}^{a,a'}$ implies the $\F$-coherence of the pairs $\{a,a'\}$ and $\{b,b'\}$.

2. The intersection $\bar\F_h\big(]a,a'[\big)\cap\bar\F_h\big(]b,b'[\big)$ is a singleton containing a unique point $y$. If both sets $\partial D_{a,a'}^{b,b'}$ and $\partial D_{b,b'}^{a,a'}$ are doubletons, then we can use the equality $\bar \F_\partial (\partial D_{a,a'}^{b,b'})=\{y\}=\bar \F_\partial(\partial D_{b,b'}^{a,a'})$, which implies that the doubletons  $\partial D_{a,a'}^{b,b'}$ and $\partial D_{b,b'}^{a,a'}$ are $\F$-coherent and proceed as in the preceding case.

2a. Now assume that $\partial D_{a,a'}^{b,b'}$ is a singleton. Let $(a_n,a'_n)=p_{n+1}(a,a')$ and $(b_n,b_n')=p_{n+1}(b,b')$. The choice of the number $n$ guarantees that the pair $(a_n,a_n')$ is $\F$-coherent to $(a,a')$ and $(b_n,b_n')$ is $\F$-coherent to $(b,b')$. It follows that the intersection $$\bar\F_h\big(]a_n,a_n'[\big)\cap\bar\F_h\big(]b_n,b_n'[\big)\supset
\bar\F_h\big(]a,a'[\big)\cap\bar\F_h\big(]b,b'[\big)=\{y\}$$is not empty.
If this intersection is a singleton, then the convex set $D_{a_n,a_n'}^{b_n,b_n'}$ also is a singleton (in the opposite case, the set $D_{a,a'}^{b,b'}=D_{a_n,a_n'}^{b_n,b_n'}\cap\; ]a,a'[$ cannot be a singleton). In this case the singleton $\partial D_{a_n,a_n'}^{b_n,b_n'}=\partial D_{a,a'}^{b,b'}$ belongs to the set $D=D_{n+1}$ and is disjoint with the open interval $]a,a'[$, which contradicts $y\in\bar\F_\partial\big(]a,a'[\big)$.
This proves that the intersection $\bar\F_h\big(]a_n,a_n'[\big)\cap\bar\F_h\big(]b_n,b_n'[\big)$ is not a singleton. Proceeding as in the case 1, we can show that the pairs $\{a_n,a_n'\}$ and $\{b_n,b_n'\}$ are $\F$-coherent and so are the pairs $\{a,a'\}$ and $\{b,b'\}$ (which are $\F$-coherent to the pairs $\{a_n,a_n'\}$ and $\{b_n,b_n'\}$, respectively).

2b. In case $\partial D_{b,b'}^{a,a'}$ is a singleton, we can proceed by analogy with the case 2a.
\end{proof}
\end{proof}

Now we are ready to prove that the composition $g\circ\bar\F_\partial$ is homotopic to some a map $\tilde g:\frac\e2K_\partial\to\frac\e2K_\partial$ of even degree. It suffices to define $\tilde g$ on each segment $[a,b]$ connecting two neighbor points of the set $D$.

We recall that by $\mathcal N\{D\}$ we denote the family of unordered pairs of neighbor points of the set $D$. The family $\mathcal N\{D\}$ decomposes into pairwise disjoint equivalence classes consisting of $\F$-coherent pairs. Denote by  $\lrN\{D\}$ the family of these equivalence classes.
For each equivalence class $E\in\lrN\{D\}$ let
$$\bar E=\bigcup\big\{[a,b]:\{a,b\}\in E\big\}\mbox{ and }\partial E=\bigcup\big\{\{a,b\}:\{a,b\}\in E\big\}.$$
It is clear that $\frac\e2K_\partial=\bigcup\{\bar E:E\in\lrN\{D\}\}.$

For each equivalence class $E\in\lrN\{D\}$ we are going to construct a specific map $\tilde g_E:\bar E\to\frac\e2K_\partial$ such that $\tilde g_E|\partial E=g\circ\bar\F_\partial|\partial E$ and
$\tilde g_E$ is homotopic to $g\circ\bar \F_\partial|\bar E$. This map $\tilde g_E$ will have a specific algebraic structure which will help us to evaluate the degree of the unified map $\tilde g=\bigcup\big\{\tilde g_E:E\in\lrN\{D\}\big\}$.

So, fix an equivalence class $E\in\lrN\{D\}$. Since any two unordered pairs from $E$ are $\F$-coherent, we can choose a function $\gamma:E\to D^2$ assigning to each unordered pair $\{a,b\}\in E$ one of ordered pairs $(a,b)$ or $(b,a)$ so that for  any unordered pairs $\{a,b\},\{a',b'\}\in\lrN\{D\}$ the ordered pairs $\gamma(\{a,b\})$ and $\gamma(\{a',b'\})$ are $\F$-coherent. Let $\vec E=\gamma(E)\subset \mathcal N(D)$ and $\rN(D)=\{\vec E:E\in\lrN\{D\}\}$.
The $\F$-coherence of any two pairs $(a,b),(a',b')\in\vec E$ implies that $\F_{a,b}^{<}=\F_{a',b'}^{<}$, $\F_{a,b}^{=}=\F_{a',b'}^{=}$, $\F_{a,b}^{>}=\F_{a',b'}^{>}$, and $C_{a,b}=C_{a',b'}$. So, we can put
$$
\begin{aligned}
&\F_{E}^{<}=\F_{a,b}^{<},\;\F_E^{=}=\F_{a,b}^{=},\;\F_E^{>}=\F_{a,b}^{>},\\
&C_E^f=C_{a,b}^f \mbox{ \ for $f\in\F$}, C_E=C_{a,b}=\prod_{f\in\F}C_{a,b}^f,\\
&I_E=I_{C_E}\;\mbox{ and }\mu_E=\mu_{C_E}:\bar C_E\to[0,1]^\F
\end{aligned}
$$where $(a,b)\in\vec E$ is any pair. We recall that $I_{C_E}$ is an arc in $\frac\e2 K_\partial$ that contains the set $g(\bar C_E\cap\bar\F(\bold 0))$.
\smallskip

For every $f\in\F$ consider the number $\e_f\in\{-1,0,1\}$ defined by the formula
$$
\e_f=\begin{cases}
1&\mbox{if $f\in\F_E^{<}$}\\
0&\mbox{if $f\in\F_E^{=}$}\\
-1&\mbox{if $f\in\F_E^{>}$}.
\end{cases}
$$
Taking into account that the subset $\mu_E\circ\bar\F_\partial(\partial E)\subset\mu_E(C_E)\subset[0,1]^\F$ is finite, it is easy to find a sequence of positive real numbers $(\alpha_f)_{f\in\F}$ such that the linear map
$$\lambda_E:[0,1]^\F\to\IR,\;\;\lambda_E:(x_f)_{f\in F}\mapsto \sum_{f\in F}\e_f\alpha_fx_f$$ is injective on the set $\mu_E\circ\F_\partial(\partial E)$.

\begin{claim}\label{cl:l7b} For each pair $\{a,b\}\in E$ the map $\lambda_E\circ\mu_E$ is injective on the set $\bar\F_\partial([a,b])$.
\end{claim}

\begin{proof} Assume that $\lambda_E\circ\mu_E(y)=\lambda_E\circ\mu_E(y')$ for some  points $y=(y_f)_{f\in\F}$ and $y'=(y'_f)_{f\in\F}$ in $\bar\F_\partial([a,b])$. Choose two points $x,x'\in [a,b]$ such that $y=\bar\F_\partial(x)$ and $y'=\bar\F_\partial (x')$. On the interval $[a,b]$ we consider the linear order such that $a<b$. We lose no generality assuming that $x<x'$ with respect to this order. The monotonicity of the maps $\bar f_h$, $f\in\F$, imply that
\begin{itemize}
\item $\bar f_h(x)\le \bar f_h(x')$  for each $f\in\F_E^{<}$;
\item $\bar f_h(x)\ge \bar f_h(x')$ for each $f\in\F_E^{>}$;
\item $\bar f_h(x)=\bar f_h(x')$ for each $f\in\F^{=}_E$.
\end{itemize}
Taking into account these inequalities, the increasing property of the maps $\mu_C$ and the choice of the numbers $\e_f$, $f\in\F$, we conclude that
$$\e_f\cdot\mu_{C_E^f}(\bar f_h(x))\le \e_f\cdot\mu_{C_E^f}(\bar f_h(x'))\mbox{ \ for all $f\in\F$}.$$ Consequently,
$$
\begin{aligned}
&\lambda_E\circ\mu_E(y)=\lambda_E\circ\mu_E\circ\bar\F_\partial(x)=
\sum_{f\in\F}\alpha_f\e_f\cdot\mu_{C_E^f}(\bar f_h(x))\le\\
&\le \sum_{f\in\F}\alpha_f\e_f\cdot\mu_{C_E^f}(\bar f_h(x')) =\lambda_E\circ\mu_E\circ\bar\F_\partial(x')=\lambda_E\circ\mu_E(y').
\end{aligned}
$$
Taking into account that $\lambda_E\circ\mu_E(y)=\lambda_E\circ\mu_E(y')$, we conclude that $y_f=\bar f_h(x)=\bar f_h(x')=y_f'$ for all $f\in\F^{\ne}_E$. For each $f\in\F^{=}_E$ the function $\bar f_h|[a,b]$ is constant and hence $y_f=\bar f_h(x)=\bar f_h(x')=y'_f$. Consequently, $y=y'$, which means that the map $\lambda_E\circ\mu_E$ is injective on $\bar\F_\partial|[a,b]$.
\end{proof}

Let us recall that by $Y$ we denote the countable set of points $y\in\bar\IR^\F$ with infinite preimage $\bar\F_\partial^{-1}(y)$.
The following claim plays a crucial role in the proof of Lemma~\ref{l:even}.

\begin{claim}\label{cl:l7e} For any $y\in \IR\setminus\lambda_E\circ\mu_E(\bar\F_\partial(\partial E)\cup Y)$ the preimage $D_y=(\lambda_E\circ\mu_E\circ\bar\F_h|\bar E)^{-1}(y)$ is finite and contains even number of points.
\end{claim}

\begin{proof} If $D_y$ is empty, then there is nothing to prove. So, we assume that the set $D_y$ is not empty. Claims~\ref{cl:l7b} and \ref{cl:t5} imply that for any pair $\{a,b\}\in E$ the intersection $D_y\cap[a,b]$ contains at most one point. This point belongs to the interior $]a,b[$ as $y\notin\lambda_E\circ\mu_E\circ\bar\F_\partial(\partial E)$.

Since $\mu_E\circ\bar\F_\partial(\bar E)\subset\mu_E(\bar C_E)\subset [0,1]^\F\subset\IR^\F$, the preimage $W=(\mu_E\circ\bar\F_h)^{-1}(\IR^\F)$ is an open neighborhood of the set $\bar E$ in $\e\bar K\setminus\frac\e2K$ and
$$\eta_E=\lambda_E\circ\mu_E\circ\bar\F_h|W:W\to\IR$$ is a well-defined continuous map.


Observe that the formula
$$\mathcal R(u,v)=\lambda_E\circ\mu_E\circ\F(u,v)=\sum_{f\in\F}\alpha_f\e_f\cdot \mu_{C_E^f}(f(u,v))$$ determines a rational function on $\IR^2$.
We claim that this rational function is not constant. Indeed, the set $D_y$, being not empty, contains some point $x$ which lies in the interval $[a,b]$ for some pair $\{a,b\}\in E$. Since $\eta_E(x)=y\ne \eta_E(a)$, we can find $t>1$ such that
$ta,tx\in W$ and $\eta_E(ta)\ne \eta_E(tx)$. Now we see that
$$\mathcal R(h(ta))=\eta_E(ta)\ne\eta_E(tx)=\mathcal R(h(tx)),$$which means that the rational function $\mathcal R$ is not constant. So, it is legal to consider the plane algebraic curve $A_y=\mathcal R^{-1}(y)$.

The choice of the point $y$ guarantees that $y\notin\eta_E(\partial E)$. Since $W$ is an open neighborhood of $\bar E$ in $\e\bar K\setminus\frac\e2 K$ and $\lim_{m\to\infty}\delta_m=\frac\e2$, there is
a number $m\in\IN$ so large that:
\begin{itemize}
\item $[1,\frac{2\delta_m}{\e}]\cdot\bar E\subset W$,
\item $y\notin\eta_E([1,\frac{2\delta_m}{\e}]\cdot\partial E)$, and
\item the number $\e_m$ is $A_y$-small.
\end{itemize}

 Let $\mathcal A_y$ denote the family of connected components of the  set $A_y\cap\e_m\bar K_\circ$. Since $\e_m$ is $A_y$-small each set $A\in \A_y$ is an $\e_m$-elementary branch of the algebraic curve $A_y$. By $A^*\in\A_y$ we shall denote its conjugate $\e_m$-branch.

For each $\e_m$-elementary branch $A\in\A_y$ the preimage $B=h^{-1}(A)$ is a curve in the ``square annulus" $\delta_m \bar K\setminus \frac\e2K$. Let $B^*=h^{-1}(A^*)$ be the ``conjugate'' curve to $B=h^{-1}(A)$.
Now consider the family $\mathcal B_y=\{h^{-1}(A):A\in\A_y\}$ that decomposes into pairs of conjugate curves.

\begin{claim}\label{cl:l7c} For any curve $B\in\mathcal B_y$ and its closure $\bar B$ in $\IR^2$ the intersection $\bar B\cap\frac\e2 K_\partial$ is a non-empty convex subset of $\frac\e2 K_\partial$ such that $\bar B\cap \bar E\subset D_y$. If the intersection $\bar B\cap\bar E$ is not empty, then it is a singleton. \end{claim}

\begin{proof} We lose no generality assuming that the $\e_n$-elementary curve $A=h(B)\in\mathcal A_y$ is an east $\e_m$-elementary curve. The construction of the homeomorphism $h$ guarantees that the curve $B$ coincides with the graph of some continuous function defined on the interval $(\frac\e2,\delta_m]$.
This implies that the intersection $\bar B\cap \frac\e2K_\partial$ is a non-empty closed convex subset that lies in the east side $\{\frac\e2\}\times[-\frac\e2,\frac\e2]$ of the square $\frac\e2 K_\partial$. Taking into account that $\eta_E(W\cap B)=\{y\}$, we conclude that $\eta_E(W\cap \bar B)=\{y\}$ and hence $\bar B\cap\bar E\subset D_y$.

If $\bar B\cap \bar E$ is not empty, then it is a singleton because $\bar B\cap \frac\e2 K_\partial$ is convex, does not meet the set $\partial E$, and the intersection $\bar B\cap \bar E\subset D_y$ is finite.
\end{proof}

Let $$\mathcal B_y^E=\{B\in\mathcal B_y:\bar B\cap\bar E\ne\emptyset\}\mbox{ and }\mathcal A_y^E=\{h(B):B\in\mathcal B_y^E\}.$$ For every $B\in\mathcal B^E_y$ let $\pi(B)$ be the unique point of the intersection $\bar B\cap\bar E\subset D_y$.

The following claim completes the proof of Claim~\ref{cl:l7e} showing that the set $|D_y|=|\mathcal B^E_y|$ has even cardinality.
\end{proof}

\begin{claim}\label{cl:l7d}
\begin{enumerate}
\item For any pair $\{a,b\}\in E$ and $t\in(1,\frac{2\delta_m}{\e}]$ the segment $[ta,tb]$ meets at most one set $B\in\mathcal B_y^E$.
\item The function $\pi|\mathcal B_y^E:\mathcal B_y^E\to D_y$ is bijective.
\item For each curve $B\in\mathcal B_y^E$ its conjugate curve $B^*$ belongs to $\mathcal B_y^E$, so the cardinality $|\mathcal B_y^E|$ is even.
\end{enumerate}
\end{claim}

\begin{proof}
1. Assume that for some pair $\{a,b\}\in E$ and some $t\in(1,\frac{2\delta_m}{\e}]$ the segment $[ta,tb]$ meets two distinct curves $B,B'\in\mathcal B_y^E$ at some points $u,u'$, respectively. We lose no generality assuming the points $u,u'$ are ordered so that $[ta,u]\cap[u',tb]=\emptyset$.
Since $D\supset\frac12B_0$ there are two neighbor points $a_0,b_0\in\frac12B_0$ such that $[a,b]\subset[a_0,b_0]$ and $[a_0,a]\cap[b,b_0]=\emptyset$.

Now consider the points $h(u)$, $h(u')\in A_y\cap(t-1)\e\bar K_\circ$ and observe that
$[h(u),h(u')]\subset[h(ta),h(tb)]\subset [h(ta_0),h(tb_0)]$.
The property (13) of the set $B_0$ guarantees that for every function $f\in\F$ the restriction $f|[h(ta_0),h(tb_0)]$ either is constant or is injective. In particular,  for each function $f\in\F_{a,b}^{\ne}$ the restriction $f|[h(u),h(u')]$ is injective. Then the choice of the sign $\e_f$, guarantees that $\e_f f(h(u))<\e_ff(h(u'))$ and then $$y=\lambda_E\circ\mu_E\circ\F(h(u))<\lambda_E\circ\mu_E\circ\F(h(u'))=y,$$
which is  the desired contradiction.
\smallskip

2. First we check that the function $\pi|\mathcal B_y^E$ is injective. Assume that $\pi(B)=\pi(B')$ for two distinct curves $B,B'\in\mathcal B^E_y$. Let $x\in\pi(B)=\pi(B')\in\bar E\cap D_y$ and find an ordered pair $(a,b)\in\vec E$ such that $x\in {]a,b[}$. The connectedness of the curves $B$ and $B'$ implies that for some $t\in(1,2\delta_m]$ the segment $[ta,tb]$ intersects both curves $B$ and $B'$ which is forbidden by Claim~\ref{cl:l7d}(1).

Now we prove that the function $\pi|\mathcal B_y^E$ is surjective. Fix any point $x\in D_y$ and find an ordered pair $(a,b)\in\vec E$ such that $x\in {]a,b[}$. Since $y\notin\eta_E(\{a,b\})$, we conclude that $\bar\F_h(a)\ne\bar\F_h(x)\ne\bar\F_h(b)$. Then the choice of the signs $\e_f$, $f\in\F$, guarantees that $\eta_E(a)<\eta_E(x)=y<\eta_E(b)$. Choose a number $t\in(1,\frac{2\delta_m}{\e}]$ such that $$\eta_E(ta)<y<\eta_E(tb).$$  It follows that the point $y$ belongs to $\eta_E([ta,tb])$ and the segment $[ta,tb]$ meets the preimage $B=h^{-1}(A)$  of some $\e_m$-branch $A$ of the algebraic curve $A_y$. Taking into account that $A$ is an $\e_m$-elementary curve and the intervals $[a,ta]$ and $[b,tb]$ do not intersect $B$, we conclude that the curve $B$ has a limit point $\pi(B)$ in the singleton $[a,b]\cap D_y=\{x\}$.
\smallskip

3. Take any curve $B\in\mathcal B^E_y$ and consider its conjugate curve $B^*$. Choose any point $x^*\in\bar B\cap \frac\e2 K_\partial$. For the points $x=\pi(B)$ and $x^*$ find pairs $\{a,b\},\{a^*,b^*\}\in\mathcal N\{D\}$ such that $x\in[a,b]$ and $x^*\in[a^*,b^*]$. It follows from $B\in\mathcal B^E_y$ that the pair $\{a,b\}\in E$. We need to show that the pair $\{a^*,b^*\}$ also belongs to $E$, which means that $\{a^*,b^*\}$ is $\F$-coherent to $\{a,b\}$. This will follow from Claim~\ref{cl:D}(3) as soon as we check that $\bar \F_h(x)=\bar \F_h(x^*)$.

Since the curves $B$ and $B^*$ are conjugated, their images $A=h(B)$ and $A^*=h(B^*)$ are conjugated $\e_m$-branches of the algebraic curve $A_y$. Lemma~\ref{l:conjug} implies that
$$\lim_{A\ni z\to\mathbf 0}\F(z)=\lim_{A^*\ni z\to\mathbf 0}\F(z).$$
Using the continuity of the function $\bar\F_h$ at the points $x$ and $x^*$, we see that
$$
\begin{aligned}
\bar\F_h(x)&=\lim_{B\ni u\to x}\bar \F_h(u)=\lim_{B\ni u\to x}\F\circ h(u)=\lim_{A\ni v\to\mathbf 0}\F(v)=\\
&=\lim_{A^*\ni v\to\mathbf 0}\F(v)=\lim_{B^*\ni u\to x^*}\F\circ h(u)=\lim_{B^*\ni u\to x^*}\F_h(u)=\bar\F_h(x^*).
\end{aligned}
$$
\end{proof}

Now we can continue the proof of Lemma~\ref{l:even}.

Choose a finite subset $N_E\subset\IR$ such that
\begin{itemize}
\item the convex hull $\conv(N_E)$ of $N_E$ contains the compact subset $\eta_E(\bar E)$ of $\IR$;
\item $\eta_E(\partial E)\subset N_E$;
\item for any neighbor points $a,b$ of $\eta_E(\partial E)$ the interval $]a,b[$ has non-empty intersection with the set $N_E$.
\end{itemize}

Fix a continuous map $\varphi_E:\IR\to I_E\subset\frac\e2K_\partial$ such that
\begin{itemize}
\item $\varphi_E\circ \eta_E|\partial E=g\circ\bar\F_\partial|\partial E$;
\item for any neighbor points $a,b$ of the set $N_E$ the restriction $\varphi|[a,b]:[a,b]\to I_E$ is injective.
\end{itemize}
Finally,  put $$\tilde g_E=\varphi_E\circ\eta_E|\bar E:\bar E\to I_E\subset\tfrac\e2K_\partial.$$

Taking into account that $\tilde g_E(\bar E)\cup g\circ\bar\F_\partial(\bar E)\subset I_E$ and $\tilde g_E|\partial E=g\circ\bar\F_{\partial}|\partial E$, we see that the maps $\tilde g_E,g\circ\bar\F_\partial|\bar E:\bar E\to I_E$ are homotopic by a homotopy $h_E:\bar E\times[0,1]\to I_E$ such that
\begin{itemize}
\item $h_E(x,0)=\tilde g_E(x),\;h_E(x,1)=g\circ\bar\F_\partial(x)$ for all $x\in\bar E$ and
\item $h_E(x,t)=\tilde g_E(x)=g\circ\bar\F_\partial(x)$ for all $x\in \partial E$ and $t\in[0,1]$.
\end{itemize}

The maps $\tilde g_E$, $E\in\lrN\{D\}$, compose a  map $\tilde g:\frac\e2K_\partial\to\frac\e2K_\partial$ defined by $\tilde g|\bar E=\tilde g_E$ for $E\in\lrN\{D\}$.

Also, the homotopies $h_E:\bar E\times[0,1]\to I_E\subset \frac\e2 K_\partial$, $E\in\lrN\{D\}$, compose a homotopy $h:\frac\e2 K_\partial\times[0,1]\to\frac\e2 K_\partial$ between the maps $\tilde g$ and $g\circ\bar\F_\partial$.

The proof of Lemma~\ref{l:even} is finished by the following claim.

\begin{claim} The map $\tilde g$ is $\IZ_2$-trivial.
\end{claim}

\begin{proof} To show that the map $\tilde g$ is $\IZ_2$-trivial, we shall apply Lemma~\ref{l:degree}. Pick any point $y_0\in\frac\e2K_\partial$ which does not belong to the countable set
$$\bigcup\big\{\tilde g_E(\partial E)\cup \varphi(N_E)\cup\varphi_E\circ\lambda_E\circ\mu_E(Y\cap\bar C_E) :E\in\lrN\{D\}\big\}.$$
For every equivalence class $E\in\lrN\{D\}$ consider the set $Y_E=\varphi^{-1}_E(y_0)$ which is finite by the choice of the function $\varphi_E$. By Claim~\ref{cl:l7e}, for every point $y\in Y_E$ the preimage $D_y=(\lambda_E\circ\mu_E\circ\bar\F_h|\bar E)^{-1}(y)$ is finite and contains even number of points. Since $y_0\notin \tilde g(\partial E)$, we get $D_y\subset \bar E\setminus\partial E$. Then the preimage $\tilde g_E^{-1}(y_0)=\bigcup_{y\in Y_E}D_y$ lies in $\bar E\setminus\partial E$ and contains even number of points. Unifying these preimages, we conclude that the preimage
$$\tilde g^{-1}(y_0)=\bigcup\{\tilde g^{-1}_E(y_0):E\in\lrN\{D\}\}$$ has even cardinality and lies in the set $\frac\e2K_\partial\setminus D$.

It remains to check that each point $x\in\tilde g^{-1}(y_0)$ has a neighborhood $U_x\subset\frac\e2K_\partial$ such that the map $\tilde g|U_x$ is monotone. Find two neighbor points $a,b$ of the set $D$ such that $x\in{]a,b[}$.
Let $E$ be the $\F$-coherence class of the pair $\{a,b\}$. By Claims~\ref{cl:t6a} and \ref{cl:l7b}, the map $\lambda_E\circ\mu_E\circ\F_h|[a,b]$ is monotone. Now consider the point $y=\lambda_E\circ\mu_E\circ\bar\F_h(x)$ and observe that $y\notin N_E$ (as $\varphi_E(y)=y_0\notin \varphi_E(N_E)$). The choice of the function $\varphi_E$ guarantees that the point $y$ has a neighborhood $V_y\subset \IR\setminus N_E$ such that the restriction $\varphi_E|V_y$ is injective (and hence monotone). Then the neighborhood $U_x=(\lambda_E\circ\mu_E\circ\bar\F_h|[a,b])^{-1}(V_y)$ has the desired property: the restriction $\tilde g|U_x=\tilde g_E|U_x=\varphi_E\circ\lambda_E\circ\mu_E\circ\bar\F_h|U_x$ is monotone.
\end{proof}

\end{proof}
\end{proof}

\section{Inverse spectra}

In the proof of Theorems~\ref{main1} and \ref{main2} we shall widely use the technique of inverse spectra, see \cite{Eng}, \cite{Chi}. Formally speaking an {\em inverse spectrum} in a category $\mathcal C$ is a contravariant functor $\mathcal S:\Sigma\to \C$ from a directed partially ordered set $\Sigma$ to the category $\C$. A partially ordered set (briefly a poset) $\Sigma$ is called {\em directed} if for any elements $\alpha,\beta\in\Sigma$ there is an element $\gamma\in\Sigma$ such that $\gamma\ge\alpha$ and $\gamma\ge\beta$. Each poset $\Sigma$ can be identified with a category whose objects are elements of $\Sigma$ and two objects $\alpha,\beta\in\Sigma$ are linked by a single morphism $\alpha\to\beta$ if and only if $\alpha\le\beta$.

An inverse spectrum $\mathcal S:\Sigma\to\C$ can be written directly as the family $\{X_\alpha,p_\alpha^\beta,\Sigma\}$ consisting of objects $X_\alpha$ of the category $\C$, indexed by elements $\alpha$ of the poset $\Sigma$, and bonding morphisms $p_\alpha^\beta:X_\beta\to X_\alpha$ defined for any indices $\alpha\le\beta$ in $\Sigma$, so that for any indices $\alpha\le\beta\le\gamma$ in $\Sigma$ the following two conditions are satisfied:
\begin{itemize}
\item $p_\alpha^\gamma=p^\beta_\alpha\circ p_\beta^\gamma$ and
\item $p_\alpha^\alpha$ is the identity morphism of $X_\alpha$.
\end{itemize}

Inverse spectra over a poset $\Sigma$ in a category $\C$ form a category $\C_\Sigma$ whose morphisms are natural transformations of functors. In other words, for two inverse spectra $\mathcal S=\{X_\alpha,p_\alpha^\beta,\Sigma\}$ and $\mathcal S'=\{X'_\alpha,\pi_\alpha^\beta,\Sigma\}$ a morphism $f:\mathcal S\to\mathcal S'$ in $\C_\Sigma$ is a family of morphisms $f=\{f_\alpha:X_\alpha\to X_\alpha'\}_{\alpha\in\Sigma}$ of the category $\C$ such that for any indices $\alpha\le\beta$ in $\Sigma$ the following square is commutative:
$$\xymatrix{
X_\beta\ar[d]_{p^\beta_\alpha}\ar[r]^{f_\alpha}
&{X_\beta'}\ar[d]^{\pi^\beta_\alpha}\\
X_\alpha\ar[r]_{f_\alpha}&X_\alpha'
}
$$

There is a functor $(\cdot)_\Sigma:\C\to\C_\Sigma$ assigning to each object $X$ of $\C$ the inverse spectrum $X_\Sigma=\{X_\alpha,p_\alpha^\beta,\Sigma\}$ where $X_\alpha=X$ and $p_\alpha^\beta$ is the identity map of $X$ for all $\alpha\le\beta$ in $\Sigma$. To each morphism $f:X\to Y$ of the category $\C$ the functor $(\cdot)_\Sigma$ assigns the morphism $f_\Sigma=\{f_\alpha\}_{\alpha\in\Sigma}$ where $f_\alpha=f$ for all $\alpha\in\Sigma$.

For an inverse spectrum $\mathcal S:\Sigma\to\C$ its {\em limit} is a pair $(X,p)$ consisting of an object $X$ of $\C$ and a morphism $p=\{p_\alpha\}_{\alpha\in\Sigma}:X_\Sigma\to\mathcal S$ in the category $\C_\Sigma$ such that for any other pair $(Z,\pi)$ consisting of an object $Z$ of $\C$ and a morphism $\pi=\{\pi_\alpha\}_{\alpha\in\Sigma}:Z_\Sigma\to\mathcal S$ there is a unique morphism $f:Z\to X$ such that $\pi=p\circ f_\Sigma$.
This definition implies that a limit $(X,p)$ of $\mathcal S$ if exists, is unique up to the isomorphism. Because of that the space $X$ is denoted by $\lim\mathcal S$ and called {\em the limit} of the inverse spectrum $\mathcal S$.
\smallskip

In this paper we shall be mainly interested in inverse spectra in the category $\CompEpi$ of compact Hausdorff spaces and their continuous surjective maps.
In this case, each inverse spectrum $\mathcal S=\{X_\alpha,p_\alpha^\beta,\Sigma\}$ has a limit $(X,p)$ consisting of the closed subspace
$$X=\big\{(x_\alpha)_{\alpha\in\Sigma}\in\prod_{\alpha\in\Sigma}X_\alpha:
p^\beta_\alpha(x_\beta)=x_\alpha\mbox{ for all $\alpha\le \beta$ in $\Sigma$}\big\}$$
of the Tychonoff product $\prod_{\alpha\in\Sigma}X_\alpha$ and the morphism $p=(p_\alpha)_{\alpha\in\Sigma}:X_\Sigma\to\mathcal S$ where $p_\alpha:X\to X_\alpha$, $p_\alpha:(x_\alpha)_{\alpha\in\Sigma}\mapsto x_\alpha$, is the $\alpha$-th coordinate projection.

Using the technique of inverse spectra, we shall reduce the problem of investigation of the graphoid $\GGamma(\F)$ of an arbitrary family $\F\subset \IR(x_1,\dots,x_k)$ to studying the graphoids $\GGamma(\alpha)$ of finite subfamilies $\alpha$ of $\F$. Namely, given any family  $\F\subset\IR(x_1,\dots,x_k)$ of rational functions of $k$-variables, consider the set $\Sigma=[\F]^{<\w}$ of finite subsets of $\F$, partially ordered by the inclusion relation $\subset$. Endowed with this relation, $\Sigma=[\F]^{<\w}$ becomes a directed poset. For any elements $\alpha\le\beta$ of $\Sigma$ (which are finite subsets $\alpha\subset\beta$ of $\F$) we can consider the coordinate projection $p_\alpha^\beta:\GGamma(\beta)\to\GGamma(\alpha)$. In such a way we obtain the inverse spectrum $\mathcal S_\F=\{\GGamma(\alpha),p_\alpha^\beta,\Sigma\}$ consisting of graphoids of finite subfamilies of $\F$. For each finite subset $\alpha\in\Sigma$ of $\F$ the limit projection  $p_\alpha:\GGamma(\F)\to\GGamma(\alpha)$ coincides with the corresponding coordinate projection (we recall that $\GGamma(\F)=\bar\IR^2\times\bar\IR^\F$ while $\GGamma(\alpha)\subset\bar\IR^2\times\bar\IR^\alpha$).

The crucial fact that follows from the definition of $\GGamma(\F)$ is the following lemma:

\begin{lemma}\label{l:4.1} The graphoid $\GGamma(\F)$ together with the limit projections $p_\alpha:\GGamma(\F)\to\GGamma(\alpha)$, $\alpha\in\Sigma$, is the limit of the inverse spectrum $\mathcal S_\F=\{\GGamma(\alpha),p_\alpha^\beta,\Sigma\}$ consisting of graphoids $\GGamma(\alpha)$ of finite subfamilies $\alpha\subset\F$.
\end{lemma}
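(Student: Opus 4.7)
The plan is to exhibit $\GGamma(\F)$, together with the coordinate projections, as the standard concrete realization of the limit of $\mathcal S_\F$ inside the category $\CompEpi$. Since the Tychonoff product factors as
$$\bar\IR^k\times\bar\IR^\F \;=\; \lim_{\alpha\in\Sigma}\bigl(\bar\IR^k\times\bar\IR^\alpha\bigr),$$
with bonding maps $\mathrm{id}_{\bar\IR^k}\times\pr^\beta_\alpha$ and limit projections $(x,y)\mapsto (x,\pr_\alpha(y))$, the general description of limits in $\CompEpi$ recalled just above the lemma identifies $\lim\mathcal S_\F$ with the closed subspace
$$L=\bigl\{(x,y)\in\bar\IR^k\times\bar\IR^\F:(x,\pr_\alpha(y))\in\GGamma(\alpha)\text{ for every }\alpha\in\Sigma\bigr\}$$
of $\bar\IR^k\times\bar\IR^\F$, together with the restricted coordinate projections. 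So the lemma reduces to proving the set-theoretic equality $\GGamma(\F)=L$; the required commutation $p^\beta_\alpha\circ p_\beta=p_\alpha$ is then automatic.

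For the inclusion $\GGamma(\F)\subset L$, I would simply invoke property (3) of the graphoid extension listed in Section~1: the composition $\pr_\alpha\circ\bar\F$ coincides with the graphoid extension $\bar\alpha$ of the subfamily $\alpha$. Therefore if $y\in\bar\F(x)$ then $\pr_\alpha(y)\in\bar\alpha(x)$, which reads $(x,\pr_\alpha(y))\in\Gamma(\bar\alpha)=\GGamma(\alpha)$ for every finite $\alpha\subset\F$.

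For the inclusion $L\subset\GGamma(\F)$, I would take $(x,y)\in L$ and verify the definition of $\bar\F(x)$ directly. Fix any neighborhood $O(x)$ of $x$, any finite subfamily $\mathcal E\subset\F$, and neighborhoods $O(y_f)$ of $y_f$ for $f\in\mathcal E$. By assumption $(x,\pr_\mathcal E(y))\in\GGamma(\mathcal E)$. Since $\mathcal E$ is finite the set $\dom(\mathcal E)=\bigcap_{f\in\mathcal E}\dom(f)$ is cofinite in $\bar\IR^k$ and hence dense, so property (4) applies and $\bar{\mathcal E}$ is the graphoid extension of the honest partial function $\mathcal E:\dom(\mathcal E)\to\bar\IR^\mathcal E$; equivalently $\GGamma(\mathcal E)$ is the closure of the ordinary graph $\Gamma(\mathcal E)$. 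Hence $(x,\pr_\mathcal E(y))$ is a limit of points $(x',\mathcal E(x'))$ with $x'\in\dom(\mathcal E)$, and for $x'$ close enough to $x$ one obtains simultaneously $x'\in O(x)\cap\bigcap_{f\in\mathcal E}\dom(f)$ and $f(x')\in O(y_f)$ for all $f\in\mathcal E$. This is precisely the condition defining membership of $y$ in $\bar\F(x)$, so $(x,y)\in\Gamma(\bar\F)=\GGamma(\F)$.

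I do not expect a serious obstacle here: the lemma is a near-tautology once one unwinds the definitions, and the only mild subtlety is to correctly invoke property (4) of the graphoid extension to identify $\GGamma(\mathcal E)$ with the closure of the plain graph, which is what lets one extract the approximating points $x'$ in $L\subset\GGamma(\F)$ direction. The finiteness of $\mathcal E$ (and hence density of $\dom(\mathcal E)$) is the crucial hypothesis that makes both definitions of $\bar{\mathcal E}$ agree and makes the argument work.
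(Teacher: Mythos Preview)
Your proof is correct and is precisely the intended unwinding of the definitions; the paper itself does not spell out a proof, stating only that the lemma ``follows from the definition of $\GGamma(\F)$''. One small slip: for $k\ge 3$ the set $\dom(\mathcal E)$ need not be cofinite in $\bar\IR^k$ (the common zero set of two relatively prime polynomials can have positive dimension), but it is still open and dense as a finite intersection of open dense sets, and density is all you use to invoke property~(4), so the argument stands unchanged.
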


\section{Extension dimension of limit spaces of inverse spectra}

In this section we shall evaluate the extension dimension of limit spaces of inverse spectra in the category $\CompEpi$. This information will be then used in the proofs of Theorems~\ref{main1} and \ref{main2}.

We shall say that a topological space $Y$ is an {\em absolute neighborhood extensor for compact Hausdorff spaces} and write $Y\in \ANE(\Comp)$ if each map $f:A\to Y$ defined on a closed subspace $A$ of a compact Hausdorff space $X$ has a continuous extension $\bar f:N(A)\to Y$ defined on a neighborhood $N(A)$ of $A$ in $X$.

Let us recall that a space $X$ has extension dimension $\edim(X)\le Y$ if each map $f:A\to Y$ defined on a closed subspace $A$ of $X$ has a continuous extension $\bar f:X\to Y$.

The following lemma should be known in Extension Dimension Theory but we could not find a precise reference. So, we have decided to give a proof for convenience of the reader.

\begin{lemma}\label{l:5.1} Let $\big(X,(p_\alpha)\big)$ be a limit of an inverse spectrum $S=\{X_\alpha,p_\alpha^\beta,\Sigma\}$ in the category $\CompEpi$. The limit space $X$ has extension dimension $\edim(X)\le Y$ for some space $Y\in\ANE(\Comp)$ if and only if for any $\alpha\in\Sigma$ and a map $f_\alpha:A_\alpha\to Y$ defined on a closed subspace $A_\alpha$ of the space $X_\alpha$ there are an index $\beta\ge \alpha$ in $\Sigma$ and a map $\bar f_\beta:X_\beta\to Y$ that extends the map $f_\alpha\circ p^\beta_\alpha|A_\beta:A_\beta\to Y$ defined on the closed subset $A_\beta=(p^\beta_\alpha)^{-1}(A_\alpha)$ of $X_\beta$.
\end{lemma}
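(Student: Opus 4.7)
The plan is to establish both implications by combining compactness of $X=\lim X_\alpha$ with the $\ANE$ property of $Y$ and directedness of $\Sigma$. The spectral ingredient used repeatedly is the identity $A=\bigcap_{\alpha\in\Sigma}p_\alpha^{-1}(p_\alpha(A))$ valid for every closed $A\subset X$: the sets on the right form a decreasing net of closed subsets of the compact space $X$ (decreasing because $p_\alpha=p^\beta_\alpha\circ p_\beta$ gives $p_\beta^{-1}(p_\beta(A))\subset p_\alpha^{-1}(p_\alpha(A))$ for $\alpha\le\beta$), and their intersection is $A$ by directedness of $\Sigma$ and a net-convergence argument inside compact $X$. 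An immediate corollary is that for every open neighborhood $W$ of $A$ in $X$ there is $\alpha\in\Sigma$ with $p_\alpha^{-1}(p_\alpha(A))\subset W$.

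For the sufficiency direction ($\Leftarrow$) I proceed as follows. Given $f\colon A\to Y$ on a closed set $A\subset X$, extend $f$ to $\tilde f\colon W\to Y$ on an open neighborhood $W\supset A$ using $Y\in\ANE(\Comp)$, and use the corollary above to pick $\alpha\in\Sigma$ with $A\subset p_\alpha^{-1}(A_\alpha)\subset W$, where $A_\alpha:=p_\alpha(A)$. The remaining delicate point is that $\tilde f|p_\alpha^{-1}(A_\alpha)$ need not be constant on fibres of $p_\alpha$ and so does not descend directly to a map $A_\alpha\to Y$. To cure this, I pass to a sufficiently large index $\alpha'\ge\alpha$ so that the fibres of $p_{\alpha'}$ over $A_{\alpha'}$ are mapped by $\tilde f$ inside contractible members of a carefully chosen open cover of $Y$ (such a cover exists because $Y\in\ANE(\Comp)$), and then reuse the $\ANE$ property to replace $\tilde f$ by a nearby map that genuinely factors as $\tilde f_{\alpha'}\circ p_{\alpha'}$ on $p_{\alpha'}^{-1}(A_{\alpha'})$ and still agrees with $f$ on $A$. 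Feeding $\tilde f_{\alpha'}\colon A_{\alpha'}\to Y$ into the hypothesis produces $\beta\ge\alpha'$ and $\bar f_\beta\colon X_\beta\to Y$ extending $\tilde f_{\alpha'}\circ p^\beta_{\alpha'}|A_\beta$; the composition $\bar f_\beta\circ p_\beta$ is then the required extension of $f$ to the whole of $X$.

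For the necessity direction ($\Rightarrow$) I pull the given $f_\alpha\colon A_\alpha\to Y$ back to $f:=f_\alpha\circ p_\alpha\colon A\to Y$ on $A:=p_\alpha^{-1}(A_\alpha)$ and extend to $\bar f\colon X\to Y$ using $\edim(X)\le Y$. Producing the required $\bar f_\beta$ then amounts to pushing $\bar f$ down to some finite level of the spectrum while preserving its values on $A$, which are already prescribed by $f_\alpha\circ p^\beta_\alpha$. For this I extend $\bar f$ by the $\ANE$ property to an open neighborhood of $X$ in the Tychonoff product $\prod_{\gamma\in\Sigma}X_\gamma$ and then, exactly as in the sufficiency argument, trade the approximate factorization through $p_\beta$ provided by compactness for a genuine factorization by using a cover of $Y$ by contractible opens and a large enough $\beta\ge\alpha$, carried out relative to the closed set $A_\beta$ on which the target values are already fixed.

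The main obstacle in both directions is the exact factorization of an $\ANE$-valued continuous map on the compact inverse-limit space $X$ through a finite level of the spectrum, subject to prescribed behaviour on a closed subset. This is the analogue, in the compact Hausdorff category, of Shchepin's spectral factorization theorem, and it is precisely the step where the hypothesis $Y\in\ANE(\Comp)$ is essential: compactness alone gives only approximate factorizations (up to an arbitrary open cover of $Y$), and the $\ANE$ property is what converts these approximations into the exact extensions demanded by the lemma.
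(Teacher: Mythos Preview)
Your sufficiency argument has a genuine gap at the step where you ``replace $\tilde f$ by a nearby map that genuinely factors as $\tilde f_{\alpha'}\circ p_{\alpha'}$ on $p_{\alpha'}^{-1}(A_{\alpha'})$ and still agrees with $f$ on $A$.'' These two requirements are in general incompatible: if $a,a'\in A$ satisfy $p_{\alpha'}(a)=p_{\alpha'}(a')$ but $f(a)\ne f(a')$, then \emph{no} map factoring through $p_{\alpha'}$ can agree with $f$ on $A$. Since $f:A\to Y$ is an arbitrary continuous map and the limit projections $p_{\alpha'}$ are typically not injective on $A$ (take e.g.\ $X=\{0,1\}^\omega$ as the limit of the finite cubes $\{0,1\}^n$ and $A=X$ with $f$ an embedding into $[0,1]$), such pairs exist for every index $\alpha'$. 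The Shchepin-type factorization you invoke does not rescue this: those results give factorization up to an open cover or under additional hypotheses on the spectrum, not exact agreement on a prescribed closed subset. A separate unjustified claim is that $Y\in\ANE(\Comp)$ furnishes a cover of $Y$ by contractible open sets; this is not part of the hypothesis and you give no argument for it. The same two issues recur in your necessity sketch.

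The paper's proof avoids both problems by never demanding exact factorization. It embeds $X$ in a Tychonoff cube $[0,1]^\kappa$, extends $f$ via the $\ANE$ property to $\tilde f$ on a neighborhood $O(A)$ in the cube, and then uses a partition of unity to build a ``pseudosection'' $r_\alpha:\tilde A_\alpha\to O(A)$ (with $\tilde A_\alpha=p_\alpha(\tilde A)$ for a closed neighborhood $\tilde A$ of $A$) such that $r_\alpha\circ p_\alpha|\tilde A$ is merely $\mathcal U$-close to the identity for a suitable convex open cover $\mathcal U$. One then applies the hypothesis to $f_\alpha=\tilde f\circ r_\alpha$. The resulting $\bar f_\beta\circ p_\beta$ is only close to $f$ on $A$, not equal to it; the discrepancy is removed at the end by a Urysohn function and a convex interpolation inside the cube, which lands in $O(A)$ and hence in the domain of $\tilde f$, producing a global map that equals $f$ exactly on $A$. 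The linear structure of the cube plays the role you tried to assign to a cover by contractibles, and the convex-homotopy gluing replaces your exact factorization. The necessity direction is handled symmetrically, embedding $\bar f(X)$ rather than $X$ in a cube.
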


\begin{proof} First we prove the ``if'' part of the lemma.
To prove that $X$ has extension dimension $\edim(X)\le Y$, fix a map $f:A\to Y$ defined on a closed subset $A$ of the space $X$. Embed the space $X$ into a Tychonoff cube $[0,1]^\kappa$. Since $Y\in\ANE(\Comp)$, the map $f$ admits a continuous extension $\tilde f:O(A)\to Y$ defined on an open neighborhood $O(A)$ of $A$ in $[0,1]^\kappa$. Next, find a closed neighborhood $\tilde A\subset O(A)$ of $A$ in $[0,1]^\kappa$.

Let $\U$ be a cover of $[0,1]^\kappa$ by open convex subsets such that $$\St(\tilde A,\U):=\bigcup\big\{U\in\U:\tilde A\cap U\ne\emptyset\big\}\subset O(A).$$

\begin{claim}\label{cl:5.2} There is an index $\alpha\in\Sigma$ and a continuous map $r_\alpha:\tilde A_\alpha\to O(A)$ defined on the closed subset $\tilde A_\alpha=p_\alpha(\tilde A)$ of $X_\alpha$ such that the map $r_\alpha\circ p_\alpha|\tilde A$ is $\U$-near to the identity embedding $\tilde A\to O(A)$ in the sense that for each $x\in \tilde A$ the doubleton $\{x,r_\alpha\circ p_\alpha(x)\}$ lies in some set $U\in\U$.
\end{claim}

\begin{proof} Let $\V$ be an open cover of $[0,1]^\kappa$ that star-refines the cover $\U$ (the latter means that for every $V\in\V$ its $\V$-star $\St(V,\V)$ lies in some set $U\in\U$).

It is well-known that the topology of the limit space $X$ of the spectrum $\mathcal S$ is generated by the base consisting of the sets $p_\alpha^{-1}(U_\alpha)$ where $\alpha\in\Sigma$ and $U_\alpha$ is an open set in $X_\alpha$. Here $p_\alpha:X\to X_\alpha$ stands for the limit projection.

This fact allows us to find for every $z\in \tilde A$ an index $\alpha_z\in\Sigma$ and an open neighborhood $W_z\subset X_{\alpha_z}$ of $p_{\alpha_z}(z)$ such that the neighborhood $p^{-1}_{\alpha_z}(W_z)$ of $z$ lies in some set $V_z\in\V$. The open cover $\{p^{-1}_{\alpha_z}(W_z):z\in \tilde A\}$ of the compact subset $\tilde A$ admits a finite subcover $\{p^{-1}_{\alpha_z}(W_z):z\in F\}$. Here $F$ is a suitable finite subset of $\tilde A$. Since the index set $\Sigma$ is directed, there is an index $\alpha\in\Sigma$ such that $\alpha\ge\alpha_z$ for all $z\in F$. Changing the sets $W_z$ by $(p^\alpha_{\alpha_z})^{-1}(W_{z})$, we can assume that $\alpha_z=\alpha$ for all $z\in F$. Then $\W=\{W_z:z\in F\}$ is an open cover of the closed subset $\tilde A_\alpha=p_\alpha(\tilde A)$ of the compact space $X_\alpha$. Let $\{\lambda_z:\tilde A_\alpha\to[0,1]\}_{z\in F}$ be a partition of unity, subordinated to the cover $\W$ in the sense that $\lambda_z^{-1}\big(]0,1]\big)\subset W_z$ for all $z\in F$.

Consider the map $r_\alpha:\tilde A_\alpha\to[0,1]^\kappa$ defined by
$$r_\alpha(x)=\sum_{z\in F}\lambda_z(x)\cdot z.$$
We claim that this map has the required property: $r_\alpha\circ p_\alpha|\tilde A$ is $\U$-near to the identity embedding $\tilde A\to O(A)$.

Given any point $x\in\tilde A$, consider the finite set $E=\{z\in F:\lambda_z(p_\alpha(x))>0\}$. It follows that $$r_\alpha(p_\alpha(x))=\sum_{z\in E}\lambda_z(p_\alpha(x))\cdot z.$$
Observe that for every $z\in E$ we get
$$x\in p^{-1}_\alpha(p_\alpha(x))\subset p_\alpha^{-1}\big(\lambda_z^{-1}\big(]0,1]\big)\big)\subset p^{-1}_\alpha(W_z)\subset V_z$$and hence $$E\cup\{x\}\subset\bigcup_{z\in E}V_z\subset\St(x,\V)\subset U$$for some open convex set $U\in\U$.
The convexity of the set $U$ guarantees that this set contains the following convex combination:
$$r_\alpha(p_\alpha(x))=\sum_{z\in E}\lambda_z(p_\alpha(x))\cdot z.$$
\end{proof}

Claim~\ref{cl:5.2} implies that $$r_\alpha(\tilde A_\alpha)=r_\alpha\circ p_\alpha(\tilde A)\subset\St(\tilde A,\U)\subset O(A),$$ so the composition $f_\alpha=\tilde f\circ r_\alpha:\tilde A_\alpha\to Y$ is a well-defined continuous map. By our assumption, there is an index $\beta\ge\alpha$ and a continuous map $\bar f_\beta:X_\beta\to Y$ that extends the map $f_\beta=f_\alpha\circ p_\alpha^\beta|\tilde A_\beta$, where $\tilde A_\beta=(p_\alpha^\beta)^{-1}(\tilde A_\alpha)\supset p_\beta(\tilde A)$.
Observe that
$$\bar f_\beta\circ p_\beta|\tilde A=f_\beta\circ p_\beta|\tilde A=f_\alpha\circ p_\alpha^\beta\circ p_\beta|\tilde A=f_\alpha\circ p_\alpha|\tilde A=\tilde f\circ r_\alpha\circ p_\alpha|\tilde A.$$
Using the Urysohn Lemma, choose a continuous function $\xi:X\to[0,1]$ such that $\xi(A)\subset\{1\}$ and $X\setminus \tilde A\subset \xi^{-1}(0)$.

Claim~\ref{cl:5.2} implies that for every $x\in \tilde A$ the convex combination $\xi(x)x+(1-\xi(x))r_\alpha(p_\alpha(x))$ lies in $\St(\tilde A,\U)\subset O(A)$ so, the function $\bar f:X\to Y$,
$$\bar f:x\mapsto\begin{cases}
\tilde f\big(\xi(x)x+(1-\xi(x))r_\alpha\circ p_\alpha(x)\big)&\mbox{if $x\in \tilde A$}\\
\bar f_\beta\circ p_\beta(x)&\mbox{if $x\in \xi^{-1}(0)$,}\\
\bar f_\beta\circ p_\beta(x)=\tilde f\circ r_\alpha\circ p_\alpha(x)&\mbox{if $x\in \xi^{-1}(0)\cap \tilde A$,}
\end{cases}
$$is a well-defined continuous extension of the map $f=\tilde f|A$, witnessing that $\edim(X)\le Y$.
\smallskip

Now we prove the ``only if'' part of the lemma. Assume that $\edim(X)\le Y$. Fix an index $\alpha\in\Sigma$ and a continuous map $f_\alpha:A_\alpha\to Y$ defined on a closed subset $A_\alpha$ of $X_\alpha$. We need to find an index $\beta\ge\alpha$ in $\Sigma$ and a continuous map $\bar f_\beta:X_\beta\to Y$ that extends the map $f_\alpha\circ p_\alpha^\beta|A_\beta$ defined on the subset $A_\beta=(p_\alpha^\beta)^{-1}(A_\alpha)$ of $X_\beta$.

Since $Y\in\ANE(\Comp)$, the map $f_\alpha$ admits a continuous extension $\tilde f_\alpha:\tilde A_\alpha\to Y$ defined on a closed neighborhood $\tilde A_\alpha$ of $A_\alpha$ in $X_\alpha$. Then $\tilde A=p_\alpha^{-1}(\tilde A_\alpha)$ is a closed neighborhood of the closed set $A=p_\alpha^{-1}(A_\alpha)$ in $X$. Since $\edim(X)\le Y$, the map $\tilde f_\alpha\circ p_\alpha|\tilde A$ has a continuous extension $\bar f:X\to Y$.

Embed the compact Hausdorff space $K=\bar f(X)\subset Y$ in a Tychonoff cube $[0,1]^\kappa$ of a suitable weight $\kappa$. Since $Y\in\ANE(\Comp)$, the identity embedding $K\to Y$ admits a continuous extension $\psi:O(K)\to Y$ defined on an open neighborhood $O(K)$ of $K$ in $[0,1]^\kappa$. Let $\U$ be a cover of $[0,1]^\kappa$ by open convex subsets such that $\St(K,\U)\subset O(K)$. Repeating the argument of Claim~\ref{cl:5.2}, we can find an index $\beta\ge\alpha$ in $\Sigma$ and a continuous map $f_\beta:X_\beta\to [0,1]^\kappa$ such that the composition $f_\beta\circ p_\beta$ is $\U$-near to the map $\bar f:X\to K\subset [0,1]^\kappa$.

Consider the closed neighborhood $\tilde A_\beta=(p_\alpha^\beta)^{-1}(\tilde A_\alpha)\supset p_\beta(\tilde A)$ of the set $A_\beta=(p_\alpha^\beta)^{-1}(A_\alpha)$ in the space $X_\beta$. Using the Urysohn Lemma, choose a continuous function $\xi:X_\beta\to[0,1]$ such that $A_\beta\subset \xi^{-1}(1)$ and $X_\beta\setminus \tilde A_\beta\subset\xi^{-1}(0)$.

Given any point $y\in \tilde A_\beta$, choose a point $x\in p_\beta^{-1}(y)\subset \tilde A$ (which exists by the surjectivity of the limit projection $p_\beta$), and observe that
$$\{\tilde f_\alpha\circ p_\alpha^\beta(y),f_\beta(y)\}=\{\tilde f_\alpha\circ  p_\alpha(x),f_\beta\circ  p_\beta(x)\}=\{\bar f(x),f_\beta\circ p_\beta(x)\}\subset U\subset \St(K,\U)\subset O(K)$$for some set $U\in\U$ according to the choice of the map $f_\beta$.
Then the convex combination $\xi(x)\tilde f_\alpha(p_\alpha^\beta(x))+(1-\xi(x))f_\beta(x)$ also belongs to $U\subset O(K)$, which implies that the map
$\bar f_\beta:X_\beta\to Y$,
$$\bar f_\beta(x)=\begin{cases}
\psi\big(\xi(x)\tilde f_\alpha\circ p_\alpha^\beta(x)+(1-\xi(x))f_\beta(x)\big)&\mbox{if $x\in \tilde A_\beta$}\\
\psi(f_\beta(x))&\mbox{if $x\in\xi^{-1}(0)$}\\
\psi(f_\beta(x))=\psi\circ \tilde f_\alpha\circ p_\alpha^\beta(x))&\mbox{if $x\in\tilde A_\beta\cap \xi^{-1}(0)$}
\end{cases}
$$is a well-defined continuous extension of the map $\psi\circ \tilde f_\alpha\circ p_\alpha^\beta|A_\beta=f_\alpha\circ p_\alpha^\beta|A_\beta$.
\end{proof}

Lemma~\ref{l:5.1} implies the following known fact on preservation of extension dimension by inverse limits.

\begin{corollary}\label{c:5.3} Let $\big(X,(p_\alpha)\big)$ be a limit of an inverse spectrum $S=\{X_\alpha,p_\alpha^\beta,\Sigma\}$ in the category $\CompEpi$. The limit space $X$ has extension dimension $\edim(X)\le Y$ for some space $Y\in\ANE(\Comp)$ provided that $\edim(X_\alpha)\le Y$ for all $\alpha\in\Sigma$.
\end{corollary}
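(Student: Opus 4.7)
The plan is to deduce Corollary~\ref{c:5.3} as an essentially immediate consequence of Lemma~\ref{l:5.1}. Since $X$ is the limit of the spectrum $\mathcal S$ in $\CompEpi$ and $Y \in \ANE(\Comp)$, Lemma~\ref{l:5.1} reduces the problem of bounding $\edim(X) \le Y$ to verifying a single extension condition on the bonding maps: for every $\alpha \in \Sigma$ and every continuous map $f_\alpha : A_\alpha \to Y$ defined on a closed subset $A_\alpha \subset X_\alpha$, one must produce an index $\beta \ge \alpha$ and a continuous extension $\bar f_\beta : X_\beta \to Y$ of the composition $f_\alpha \circ p_\alpha^\beta | A_\beta$, where $A_\beta = (p_\alpha^\beta)^{-1}(A_\alpha)$.

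First I would simply take $\beta = \alpha$. Then $p_\alpha^\beta = p_\alpha^\alpha$ is the identity morphism of $X_\alpha$, so $A_\beta = A_\alpha$ and the map to be extended is $f_\alpha$ itself. Since by hypothesis $\edim(X_\alpha) \le Y$ for every $\alpha \in \Sigma$, the map $f_\alpha : A_\alpha \to Y$ admits a continuous extension $\bar f_\alpha : X_\alpha \to Y$ directly from the definition of extension dimension. Setting $\bar f_\beta = \bar f_\alpha$, the hypothesis of Lemma~\ref{l:5.1} is verified.

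Applying the ``if'' direction of Lemma~\ref{l:5.1} then yields $\edim(X) \le Y$, completing the proof. There is no real obstacle here: all the technical content (the use of $Y \in \ANE(\Comp)$, approximation of maps from $X$ by maps factoring through some $X_\alpha$ via a partition-of-unity argument, and the Urysohn gluing on a collar neighborhood) is already packaged inside Lemma~\ref{l:5.1}. The only thing worth double-checking is that we really are allowed to take $\beta = \alpha$, i.e., that Lemma~\ref{l:5.1} as stated asks only for the existence of some $\beta \ge \alpha$ with the extension property; this is indeed the case, and the uniform hypothesis $\edim(X_\alpha) \le Y$ trivially supplies it.
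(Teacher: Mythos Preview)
Your proof is correct and takes essentially the same approach as the paper: the paper simply states that Corollary~\ref{c:5.3} is an immediate consequence of Lemma~\ref{l:5.1}, and you have spelled out the obvious verification by taking $\beta=\alpha$ and invoking the hypothesis $\edim(X_\alpha)\le Y$.
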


By \cite[3.2.9]{End}, a compact Hausdorff space $X$ has covering dimension $\dim X\le n$ if and only if $\edim(X)\le S^n$ where $S^n$ denotes the $n$-dimensional sphere. This fact combined with Corollary~\ref{c:5.3} yields the following well-known fact \cite[3.4.11]{End}:

\begin{corollary}\label{c:5.4} Let $\big(X,(p_\alpha)\big)$ be a limit of an inverse spectrum $S=\{X_\alpha,p_\alpha^\beta,\Sigma\}$ in the category $\CompEpi$. The limit space $X$ has dimension $\dim(X)\le n$ for some $n\in\w$ provided that $\dim(X_\alpha)\le n$ for all $\alpha\in\Sigma$.
\end{corollary}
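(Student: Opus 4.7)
The plan is to deduce Corollary~\ref{c:5.4} directly from Corollary~\ref{c:5.3} by specializing the extension-theoretic statement to spheres. Concretely, I would take $Y = S^n$ in Corollary~\ref{c:5.3} and invoke the characterization $\dim X \le n \iff \edim(X) \le S^n$ (valid for compact Hausdorff $X$ by \cite[3.2.9]{End}) to translate between covering dimension and extension dimension at both ends of the argument.

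First I would verify that $S^n \in \ANE(\Comp)$, so that Corollary~\ref{c:5.3} is applicable with $Y = S^n$. This is classical: $S^n$ is a compact ANR (even a smooth manifold) and hence an absolute neighborhood extensor for the class of compact Hausdorff spaces; any continuous map into $S^n$ from a closed subset of a compact Hausdorff space extends over a neighborhood. Next, I would use the assumption $\dim(X_\alpha) \le n$ to conclude, via \cite[3.2.9]{End}, that $\edim(X_\alpha) \le S^n$ for every $\alpha \in \Sigma$. Corollary~\ref{c:5.3} then yields $\edim(X) \le S^n$ for the limit space $X$, which is itself compact Hausdorff as a closed subspace of $\prod_{\alpha \in \Sigma} X_\alpha$. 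Applying the equivalence once more in the reverse direction gives $\dim(X) \le n$, as required.

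There is essentially no obstacle in this argument: all the real work has been done in Lemma~\ref{l:5.1} and Corollary~\ref{c:5.3}, together with the standard identification of covering dimension with extension dimension against $S^n$. The only point that deserves a brief mention (and which I would include for completeness) is why $S^n$ lies in $\ANE(\Comp)$, since the reader may otherwise wonder whether Corollary~\ref{c:5.3} applies. No additional preservation-of-dimension machinery, no special properties of the bonding maps $p_\alpha^\beta$, and no limit-of-spectra calculations are needed beyond what has already been established.
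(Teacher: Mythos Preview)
Your proposal is correct and follows exactly the paper's own approach: the paper derives Corollary~\ref{c:5.4} from Corollary~\ref{c:5.3} by taking $Y=S^n$ and invoking the equivalence $\dim X\le n\iff\edim(X)\le S^n$ from \cite[3.2.9]{End}. Your additional remarks that $S^n\in\ANE(\Comp)$ and that the limit space is compact Hausdorff are the only checks needed to make the deduction complete.
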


\section{Proof of Theorem~\ref{main1}}\label{s:t1}

In this section we present a proof of Theorem~\ref{main1}. Given any non-empty family of rational functions $\F\subset\IR(x,y)$ we need to prove that the graphoid $\GGamma(\F)$ has dimension $\dim(\bar \Gamma(\F))=2$.

\begin{lemma} The graphoid $\bar\Gamma(\F)$  has dimension $\dim(\bar\Gamma(\F))\le 2$.
\end{lemma}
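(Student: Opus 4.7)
The plan is to reduce to finite subfamilies via the inverse-spectrum technique of Sections~4--5 and then dissect the graphoid of a finite family into a $\sigma$-compact ``regular'' part and a finite ``singular'' part.

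First, by Lemma~\ref{l:4.1} the graphoid $\bar\Gamma(\F)$ is the limit (in $\CompEpi$) of the inverse spectrum $\{\bar\Gamma(\alpha),p_\alpha^\beta,\Sigma\}$ indexed by the directed set $\Sigma=[\F]^{<\w}$ of finite subfamilies of $\F$. Corollary~\ref{c:5.4} then reduces the problem to establishing $\dim\bar\Gamma(\alpha)\le 2$ for every finite $\alpha\subset\F$, so from now on I assume $\F$ is finite.

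With $\F$ finite the ambient space $\bar\IR^2\times\bar\IR^\F$ is compact metrizable, and (by the B\'ezout argument already recalled in the proof of Theorem~\ref{local}) the set $S=\bar\IR^2\setminus\dom(\F)$ is finite. I split
$$\bar\Gamma(\F)=\Gamma_\star\cup\Gamma_S, \qquad \Gamma_\star=\{(x,\F(x)):x\in\dom(\F)\}, \qquad \Gamma_S=\bigcup_{s\in S}\{s\}\times\bar\F(s).$$
The set $\Gamma_S$ is closed in $\bar\Gamma(\F)$ (it is the preimage of the finite closed set $S$ under $\pr_1$), and by Theorem~\ref{local}(6), applied at each $s\in S$ after translating $s$ to $\mathbf 0$, each fiber $\bar\F(s)$ is either a singleton or a finite union of monotone arcs; hence $\dim\bar\F(s)\le 1$, so $\dim\Gamma_S\le 1$. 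On the other hand, the first-coordinate projection $\Gamma_\star\to\dom(\F)$ is a homeomorphism with continuous inverse $x\mapsto(x,\F(x))$, so $\Gamma_\star$ is homeomorphic to an open subset of the $2$-torus $\bar\IR^2$ and $\dim\Gamma_\star\le 2$.

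To conclude, I exhaust the open set $\Gamma_\star=\bar\Gamma(\F)\setminus\Gamma_S$ by closed subsets of $\bar\Gamma(\F)$: the sets $F_n=\{(x,y)\in\bar\Gamma(\F):d(x,S)\ge 1/n\}$ are closed in $\bar\Gamma(\F)$, contained in $\Gamma_\star$, and $\Gamma_\star=\bigcup_{n\in\IN}F_n$. By monotonicity of covering dimension in separable metric spaces, $\dim F_n\le\dim\Gamma_\star\le 2$. Applying the countable sum theorem for covering dimension to the countable closed cover
$$\bar\Gamma(\F)=\Gamma_S\cup\bigcup_{n\in\IN}F_n$$
of the compact metric space $\bar\Gamma(\F)$ by sets of dimension at most $2$, I obtain $\dim\bar\Gamma(\F)\le 2$. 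The single geometric input of substance is the $1$-dimensionality of the singular fibers $\bar\F(s)$, which is exactly Theorem~\ref{local}(6), already established; the remaining ingredients---the inverse-spectrum reduction, the homeomorphism $\Gamma_\star\cong\dom(\F)$, and the countable sum theorem---are routine, so I do not anticipate any serious obstacle in carrying out the plan.
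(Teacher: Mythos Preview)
Your proposal is correct and follows essentially the same route as the paper's own proof: reduce to finite $\F$ via Lemma~\ref{l:4.1} and Corollary~\ref{c:5.4}, split $\bar\Gamma(\F)$ into the graph over $\dom(\F)$ (homeomorphic to an open subset of $\bar\IR^2$, hence $\le 2$-dimensional) and the finitely many singular fibers (each $\le 1$-dimensional by Theorem~\ref{local}(6)), then apply the sum theorem. Your version is in fact slightly more careful than the paper's, since you explicitly write the open piece $\Gamma_\star$ as a countable union of closed sets $F_n$ before invoking the countable closed sum theorem, whereas the paper applies the sum theorem directly to the non-closed piece $\Gamma(\G)$.
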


\begin{proof}  By Lemma~\ref{l:4.1}, the graphoid $\GGamma(\F)$ is homeomorphic to the limit space of the inverse spectrum $\mathcal S_\F=\{\GGamma(\alpha),p_\alpha^\beta,[\F]^{<\w}\}$ that consists of graphoids $\GGamma(\alpha)$ of finite subfamilies $\alpha\subset\F$.
Now Corollary~\ref{c:5.4} will imply that $\dim\GGamma(\F)\le 2$ as soon as we check that $\dim\GGamma(\G)\le 2$ for any finite subfamily $\G\subset \F$.

Since $\G$ is finite,  the set $\dom(\G)=\bigcap_{f\in\G}\dom(f)$ is cofinite in $\bar \IR^2$.
Identify the family $\G$ with the partial continuous function $$\G:\dom(\G)\to\bar\IR^\G,\;\;\G:x\mapsto(f(x))_{f\in\G}$$and let $\bar\G$ be the graphoid extension of $\G$. Then $\bar \Gamma(\G)=\Gamma(\bar\G)$ and hence
$$\bar\Gamma(\G)=\Gamma(\bar\G)=\Gamma(\G)\cup
\bigcup\big\{\{z\}\times\bar \G(z):z\in\bar\IR^2\setminus\dom(\G)\big\}.$$

Theorem~\ref{local}(6) implies that for every point $z\in\dom(\G)$ the set $\bar \G(z)$ has dimension $\dim(\bar\G(z))\le 1$.
Since the graph $\Gamma(\G)$ is homeomorphic to the cofinite set $\bar\IR^2\setminus\dom(\G)$, it has dimension $\dim(\Gamma(\G))\le\dim(\bar\IR^2)=2$.
Now Theorem of Sum \cite[1.5.3]{End} implies that
$$\dim \bar\Gamma(\G)\le\sup\{\dim(\Gamma(\G)),\dim\bar\G(z): z\in\bar\IR^2\setminus\dom(\G)\}\le 2.$$
\end{proof}

\begin{lemma}\label{l:6.2} $\dim \GGamma(\F)\ge 2$.
\end{lemma}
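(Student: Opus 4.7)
My plan is to apply Lemma~\ref{l:5.1} in contrapositive form with target space $Y=K_\partial$, which is homeomorphic to $S^1$ and hence lies in $\ANE(\Comp)$. Since a compact Hausdorff space $X$ satisfies $\dim X\le 1$ if and only if $\edim X\le S^1$ (Theorem~3.2.10 of \cite{End}, as cited in the paper), it suffices to exhibit a single index $\alpha\in\Sigma=[\F]^{<\w}$, a closed $A_\alpha\subset\bar\Gamma(\alpha)$, and a continuous map $f_\alpha:A_\alpha\to K_\partial$ such that for no $\beta\supset\alpha$ in $\Sigma$ does the pullback $f_\alpha\circ p^\beta_\alpha|_{A_\beta}$ extend continuously to $\bar\Gamma(\beta)$.

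The natural choice is $\alpha=\emptyset$, so that $\bar\Gamma(\alpha)=\bar\IR^2$ and $p^\beta_\emptyset$ coincides with the projection $\pi:\bar\Gamma(\beta)\to\bar\IR^2$. For a small $\e>0$ I would take $A_\alpha=\e K_\partial\subset\bar\IR^2$ (the square-circle of ``radius'' $\e$ around the origin) and let $f_\alpha:\e K_\partial\to K_\partial$ be the degree-$1$ rescaling $x\mapsto x/\e$; then $A_\beta=\pi^{-1}(\e K_\partial)$ and the pullback map is $f_\alpha\circ\pi|_{A_\beta}$.

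The heart of the proof is the claim that no continuous $F:\bar\Gamma(\beta)\to K_\partial$ extends $f_\alpha\circ\pi|_{A_\beta}$. For $\beta=\emptyset$ this is classical: a degree-$1$ map $\e K_\partial\to K_\partial$ cannot extend across the contractible disk $\e\bar K$. For $\beta$ finite non-empty I argue by contradiction. Composing with the graph section $s_\beta:\dom(\beta)\to\bar\Gamma(\beta)$, $s_\beta(x)=(x,\beta(x))$, produces a continuous map $F\circ s_\beta:\e\bar K\cap\dom(\beta)\to K_\partial$ that agrees with $f_\alpha$ on $\e K_\partial\cap\dom(\beta)$. The bad set $Z=\e\bar K\setminus\dom(\beta)$ is finite. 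At each $z\in Z$ I apply Theorem~\ref{local} to $\beta$ (translated so that $z$ becomes the origin) to obtain, for a suitable $\e_z>0$, the resolution homeomorphism $h_z$ on the annulus $z+(\e_z\bar K\setminus\tfrac{\e_z}{2}\bar K)$ and the continuous extension $\bar\beta_{h_z}$. The formula $\phi_z(u)=F\bigl(z+h_z(u),\bar\beta_{h_z}(u)\bigr)$, combined with the evaluation $\phi_z(u)=F(z,\bar\beta_{h_z}(u))$ on the inner boundary (well-defined by property~(4) of Theorem~\ref{local}, since $\bar\beta_{h_z}(\tfrac{\e_z}{2}K_\partial)=\bar\beta(z)$ and $F$ is continuous on $\bar\Gamma(\beta)$), defines a continuous map on the closed annulus $z+(\e_z\bar K\setminus\tfrac{\e_z}{2}K)$. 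Its outer-boundary restriction equals $F\circ s_\beta|_{z+\e_z K_\partial}$, and its inner-boundary restriction is $F|_{\bar\beta(z)}\circ\bar\beta_{h_z}|_{z+\frac{\e_z}{2}K_\partial}$, which by property~(7) of Theorem~\ref{local} is $\IZ_2$-trivial. Homotopy invariance of $\IZ_2$-degree through the annulus $\phi_z$ makes $F\circ s_\beta|_{z+\e_z K_\partial}$ itself $\IZ_2$-trivial. Applying Lemma~\ref{l:2.5} to $F\circ s_\beta:\e\bar K\setminus\bigcup_{z\in Z}B(z,\e_z)\to K_\partial$ (with the $\e_z$ shrunk to satisfy the disjointness hypotheses) then transfers the $\IZ_2$-triviality from every inner puncture boundary to the outer boundary $\e K_\partial$, contradicting that $F\circ s_\beta|_{\e K_\partial}=f_\alpha$ has degree $1$.

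The main technical obstacle will be verifying the geometric hypotheses of Lemma~\ref{l:2.5}, in particular that $Z$ lies in the open square $\e K$ and that the punctures $\bar B(z,\e_z)$ are pairwise disjoint and strictly contained in $\e K$. This can fail if some bad point of $\beta$ lands on the outer boundary $\e K_\partial$; for such a boundary point $z\in Z\cap\e K_\partial$, however, the extension hypothesis forces $F$ to be constant with value $f_\alpha(z)$ on the whole fiber $\bar\beta(z)$, making the inner-boundary restriction of $\phi_z$ automatically $\IZ_2$-trivial. The argument can then be completed either by replacing $\e K_\partial$ with a slightly smaller $\e'K_\partial$ and using a thin collar inside $\dom(\beta)$ to propagate the $\IZ_2$-degree by homotopy, or by a routine variant of Lemma~\ref{l:2.5} that allows boundary bad points with constant fiber contributions. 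Once this is handled, Lemma~\ref{l:5.1} delivers $\edim\bar\Gamma(\F)\not\le K_\partial$ and therefore $\dim\bar\Gamma(\F)\ge 2$.
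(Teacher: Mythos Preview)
Your approach is essentially the same as the paper's: contrapose Lemma~\ref{l:5.1} with $Y=K_\partial$, start at $\alpha=\emptyset$, assume a hypothetical extension $F:\GGamma(\beta)\to K_\partial$ for some finite $\beta$, pull back along the graph section $s_\beta$, resolve each singularity $z\in Z$ via Theorem~\ref{local} to see that $F\circ s_\beta$ is $\IZ_2$-trivial on the small circle around $z$, and invoke Lemma~\ref{l:2.5} to contradict the odd degree on the outer boundary.

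The one substantive difference is your choice of $A_\emptyset$. You take a single circle $\e K_\partial$, which forces you to confront the possibility that a singular point of $\beta$ lands on it; you then sketch two workarounds. The paper sidesteps this entirely by taking $A_\emptyset=2\bar K\setminus K$, a \emph{thick annulus}, with $f_\emptyset$ the radial retraction $(x,y)\mapsto(x,y)/\|(x,y)\|$. Once $\beta$ is given, the cofiniteness of $\dom(\beta)$ guarantees some $t\in[1,2]$ with $tK_\partial\subset\dom(\beta)$, and one simply works on $t\bar K$ so that $Z=t\bar K\setminus\dom(\beta)$ lies in the open interior $tK$, making Lemma~\ref{l:2.5} apply directly. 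The annulus choice buys exactly the freedom to pick the working circle \emph{after} seeing $\beta$. Your workarounds are plausible (in particular, the observation that $F$ is constant on the fibre over a boundary singularity is correct and does make that contribution $\IZ_2$-trivial), but completing them rigorously requires either a genuine variant of Lemma~\ref{l:2.5} allowing boundary punctures or a careful homotopy argument through a punctured collar --- more work than you suggest. The paper's annulus trick is the cleaner resolution.
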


\begin{proof} Since $\dim\GGamma(\F)\le 1$ if and only if $\edim(\GGamma(\F))\le K_\partial$, it suffices to check that $\edim(\GGamma(\F))\not\le K_\partial$. To prove this fact, we shall apply Lemma~\ref{l:5.1}. By Lemma~\ref{l:4.1}, the graphoid $\GGamma(\F)$ is the limit of the spectrum $\mathcal S_\F=\{\GGamma(\alpha),p_\alpha^\beta,[\F]^{<\w}\}$. The smallest element of the poset $[\F]^{<\w}$ is the empty set. Its graphoid $\GGamma(\emptyset)$ can be identified with the torus $\bar\IR^2$.

Let $A_\emptyset=2\bar K\setminus K$ where $K=(-1,1)^2$ is the open square in the plane $\IR^2$ endowed with the max-norm
$$\|(x,y)\|=\max\{|x|,|y|\}.$$

Consider the map $f_\emptyset:A_\emptyset\to K_\partial$, $f:(x,y)\mapsto\frac{(x,y)}{\|(x,y)\|}$. Assuming that $\edim \GGamma(\F)\le K_\partial$, and applying Lemma~\ref{l:5.1}, we can find a finite subset $\beta\subset\F$ and a continuous map $f_\beta:\GGamma(\beta)\to K_\partial$ that extends the map $f_\emptyset\circ p_\emptyset^\beta|A_\beta:A_\beta\to K_\partial$ where $A_\beta=(p_\emptyset^\beta)^{-1}(A_\emptyset)$. The finite family $\beta\subset\F$ thought as a partial function $\beta:\dom(\beta)\to\bar\IR^\beta$ is defined on a cofinite subset $\dom(\beta)$ of $\bar\IR^2$. So, we can find a real number $t\in[1,2]$ such that $tK_\partial \subset\dom(\beta)$. Consider the finite set $Z=t\bar K\setminus\dom(\beta)$ in $tK$.

Using Theorem~\ref{local}, we can find $\e>0$ so small that
\begin{enumerate}
\item $tK_\partial\cap \bar B(Z,\e)=\emptyset$;
\item $\bar B(z,\e)\cap\bar B(z',\e)=\emptyset$ for any distinct points $z,z'\in Z$;
\item there is a homeomorphism $h:t\bar K\setminus \bar B(Z,\frac\e2)\to t\bar K\setminus Z$ such that
\begin{enumerate}
\item $h$ is identity on the set $t\bar K\setminus B(Z,\e)$,
\item $h$ has continuous extension $\bar h:t\bar K\setminus B(Z,\frac\e2)\to t\bar K$ such that $\bar h^{-1}(z)=S(z,\frac\e2)$ for every $z\in Z$;
\item the composition $\beta\circ h:t\bar K\setminus \bar B(Z,\frac\e2)\to \bar\IR^\beta$ has a continuous extension $\bar\beta_h:t\bar K\setminus  B(Z,\frac\e2)\to \bar\IR^\beta$;
\item for every $z\in Z$ and any map $\varphi_z:\bar\beta(z)\to K_\partial$ the composition $\varphi_z\circ \bar \beta_h|S(z,\frac\e2):S(z,\frac\e2)\to K_\partial$ is $\IZ_2$-trivial.
\end{enumerate}
\end{enumerate}
It follows that the map $$\psi=(\bar h,\bar\beta_h):t\bar K\setminus B(Z,\tfrac\e2)\to \GGamma(\beta),\;\;\psi:z\mapsto (\bar h(z),\bar\beta_h(z)),$$is continuous and for every $z\in Z$ the map $f_\beta\circ\psi|S(z,\frac\e2):S(z,\frac\e2)\to K_\partial$ is $\IZ_2$-trivial. Then by Lemma~\ref{l:2.5}, the map $\psi|t K_\partial:t K_\partial\to K_\partial$ also is $\IZ_2$-trivial, which is impossible as this map is a homeomorphism, which induces an isomorphism of the homology groups $H_1(tK_\partial;\IZ_2)$ and $H_1(K_\partial;\IZ_2)$. This contradiction completes the proof of Lemma~\ref{l:6.2}.
\end{proof}

\section{Proof of Theorem~\ref{main2}}\label{s:t2}

Assume that $\F\subset\IR(x,y)$ is a family of rational functions, containing a family of linear fractional transformations
$$\left\{\frac{x-a}{y-b}:(a,b)\in D\right\}$$
for some dense subset $D$ of $\IR^2$.

By Theorem~\ref{main1}, $\dim(\GGamma(\F))=2$. By Alexandroff Theorem \cite[1.4]{Dra},  $\dim_\IZ(X)=\dim(X)$ for each finite-dimensional compact Hausdorff space $X$. Consequently, $\dim_\IZ(\GGamma(\F))=\dim(\GGamma(\F))=2$.

Now let $G$ be a non-trivial 2-divisible abelian group. We need to show that $\dim_G(\GGamma(\F))=1$. To see that $\dim_G(\GGamma(\F))>0$, take any Eilenberg-MacLane complex $K(G,0)$, for example, take the group $G$ endowed with the discrete topology. Since the space $\GGamma(\F)$ is connected, any injective map $f:A\to G$ defined on a doubleton $A=\{a,b\}\subset\GGamma(\F)$ has no continuous extension $\bar f:\GGamma(\F)\to G$, which means that $\edim(\GGamma(\F))\not\le G$ and $\dim_G(\GGamma(\F))\not\le 0$.

The inequality $\dim_G(\GGamma(\F))\le 1$, which is equivalent to $\edim(\GGamma(\F))\le K(G,1)$, follows from the subsequent a bit more general result:

\begin{lemma} $\edim(\GGamma(\F))\le Y$ for any path-connected space $Y\in\ANE(\Comp)$ with 2-divisible fundamental group $\pi_1(Y)$.
\end{lemma}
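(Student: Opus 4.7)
The strategy is to combine the inverse-spectrum representation of $\GGamma(\F)$ with a cell-by-cell obstruction-theoretic extension in which the dense family of linear fractional transformations $\{f_{a,b}=(x-a)/(y-b):(a,b)\in D\}$ is used to perform local real blow-ups.

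By Lemmas~\ref{l:4.1} and~\ref{l:5.1} it suffices to prove the following finite statement: for every finite $\alpha\subset\F$ and every continuous map $f_\alpha:A_\alpha\to Y$ on a closed subset $A_\alpha\subset\GGamma(\alpha)$ there is a finite $\beta\subset\F$ with $\beta\supset\alpha$ and a continuous extension $\bar f_\beta:\GGamma(\beta)\to Y$ of $f_\alpha\circ p^\beta_\alpha|A_\beta$. Since $Y\in\ANE(\Comp)$, I first extend $f_\alpha$ to a continuous $\tilde f_\alpha:\tilde A_\alpha\to Y$ on a closed neighbourhood $\tilde A_\alpha$ of $A_\alpha$ in $\GGamma(\alpha)$, which reduces the task to extending $\tilde f_\alpha$ across the compact set $Z=\GGamma(\alpha)\setminus\mathrm{int}(\tilde A_\alpha)$, possibly after pull-back along some $p^\beta_\alpha$.

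The central geometric observation is that adjoining $f_{a,b}$ to a family regular at $(a,b)$ performs the real blow-up of $\IR^2$ at $(a,b)$: the fibre of $\GGamma(\alpha\cup\{f_{a,b}\})$ over the point corresponding to $(a,b)$ is the whole circle $\bar\IR$, and a neighbourhood of this fibre is homeomorphic to a M\"obius band $M$ whose boundary $\partial M$ represents twice the central circle in $\pi_1(M)=\IZ$, because the restriction of $f_{a,b}$ to a small circle about $(a,b)$ is a degree-two map onto $\bar\IR\cong S^1$. Fix a CW decomposition of $Z$ in which $Z\cap\tilde A_\alpha$ is a subcomplex, every 2-cell in the regular locus of $p_\alpha$ is a small topological disk, and every 2-cell meeting the finite set $S_\alpha=\bar\IR^2\setminus\dom(\alpha)$ is one of the local models of Theorem~\ref{local}. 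For each regular 2-cell $\Delta$ whose boundary $\tilde f_\alpha(\partial\Delta)$ represents a non-trivial class $\psi\in\pi_1(Y)$, pick $(a,b)\in D$ in the interior of $p_\alpha(\Delta)$ and adjoin $f_{a,b}$ to $\alpha$. The preimage of $\Delta$ in the enlarged graphoid is a M\"obius band $M$, and the 2-divisibility hypothesis produces $g\in\pi_1(Y)$ with $g^2=\psi$; sending the central circle of $M$ to a loop representing $g$ yields a continuous map $M\to Y$ whose boundary realises $\psi$, and an application of the homotopy extension property for the $\ANE$-space $Y$ converts this into a genuine extension of $\tilde f_\alpha|\partial\Delta$. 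For 2-cells containing a singular point $z\in S_\alpha$ the same blow-up is performed at a chosen $(a,b)\in D$ inside the cell but distinct from $z$; the remaining region, whose only novelty is the attached singular fibre $\bar\alpha(z)$ --- a Peano continuum of dimension at most $1$ by Theorem~\ref{local}(6) --- is then absorbed by first extending the map over $\bar\alpha(z)$ (which is possible because $\edim(X)\le Y$ for every $1$-dimensional compact $X$ when $Y\in\ANE(\Comp)$ is path-connected) and then extending over the annular part using the gluing map $\bar\F_h|\tfrac{\e}{2}K_\partial$ of Theorem~\ref{local}. Setting $\beta$ equal to $\alpha$ together with all the selected $f_{a,b}$ (finitely many by construction) and pasting the cell-wise extensions yields the required $\bar f_\beta$.

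The hardest part of the proof is coordinating the cell-wise extensions so that they glue continuously along shared 1-cells: the M\"obius-band extension on a single 2-cell is canonical only up to the choice of the square root $g\in\pi_1(Y)$, and adjacent cells must be processed coherently, which will require a relative application of the homotopy extension property of $Y$, executed inductively from the 1-skeleton inwards. An additional technical layer is the treatment of 2-cells containing a genuine singular fibre of $\alpha$: there the gluing map $\bar\F_h|\tfrac{\e}{2}K_\partial:\tfrac{\e}{2}K_\partial\to\bar\alpha(z)$ of Theorem~\ref{local} forces the intermediate extension on the annulus to factor through $\bar\alpha(z)$, and simultaneously controlling this factorisation together with the auxiliary M\"obius-band blow-up at $(a,b)\in D$ is the place where Theorem~\ref{local}(5)--(7) must be used most intensively.
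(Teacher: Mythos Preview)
Your core idea is exactly the paper's: reduce via Lemmas~\ref{l:4.1} and~\ref{l:5.1} to a finite $\alpha$, use the dense family of linear fractional transformations to replace disks by M\"obius bands, and exploit 2-divisibility of $\pi_1(Y)$ to extend over each band. Where you diverge is in the bookkeeping, and the divergence costs you.

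The paper does not introduce a CW decomposition of $Z$. Instead it \emph{first} extends $f_\alpha$ over the finitely many singular fibres $\{z\}\times\bar\alpha(z)$ (each a finite union of arcs by Theorem~\ref{local}(6), so this needs only path-connectedness of $Y$), then passes via $\ANE(\Comp)$ to a closed neighbourhood $\bar A_\alpha$ whose complement in $\GGamma(\alpha)$ is arranged to be a finite disjoint union of open sets $C$, each with closure homeomorphic to $\bar K$. One blow-up point $(a_C,b_C)\in D$ is chosen per component, $\beta=\alpha\cup\{(x-a_C)/(y-b_C):C\in\C\}$, and each $\bar C^\beta$ is literally a closed M\"obius band whose boundary circle already carries the map $\bar f_\alpha\circ p^\beta_\alpha$. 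Choosing any square root of that boundary class extends over $\bar C^\beta$, and since the closures $\bar C^\beta$ meet only in $\bar A_\beta$, where the map is already prescribed, the extensions glue for free.

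This reorganisation dissolves both difficulties you flagged. Your ``hardest part'' --- coherent choice of square roots across adjacent 2-cells --- is a phantom: once the map is fixed on the 1-skeleton, the M\"obius-band extension on each 2-cell is constrained only on its own boundary, which is already determined; adjacent bands never see each other's interior, so any square roots do. Your second layer --- cells containing a singular fibre and the appeal to Theorem~\ref{local}(5)--(7) --- is simply bypassed by absorbing the singular fibres into $\bar A_\alpha$ at the outset; in particular Theorem~\ref{local}(7) plays no role in this lemma (it is used only for the lower bound $\dim\GGamma(\F)\ge 2$). Your plan is repairable, but the paper's order of operations removes the need for repairs.
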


\begin{proof} To show that $\edim(\GGamma(\F))\le Y$ we shall apply Lemma~\ref{l:5.1}. By Lemma~\ref{l:4.1}, the graphoid $\GGamma(\F)$ is homeomorphic to the limit space of the inverse spectrum $\mathcal S_\F=\{\GGamma(\alpha),p_\alpha^\beta,[\F]^{<\w}\}$. Given a finite subset $\alpha\in[\F]^{<\w}$ and a map $f_\alpha:A_\alpha\to Y$ defined on a closed subset $A_\alpha$ of the graphoid $\GGamma(\alpha)$, we need to find a finite subset $\beta\supset\alpha$ of $\F$ and a continuous function $\bar f_\beta:\GGamma(\beta)\to Y$ that extends the map $f_\alpha\circ p_\alpha^\beta|A_\beta$ defined on the set $A_\beta=(p_\alpha^\beta)^{-1}(A_\alpha)$.

We can think of the family $\alpha\subset\F$ as a partial function $\alpha:\dom(\alpha)\to\bar\IR^\alpha$ defined on the cofinite set $\dom(\alpha)$ in $\bar\IR^\alpha$.
Let $\bar\alpha:\bar\IR^2\setmap \bar\IR^\alpha$ be the graphoid extension of $\alpha$. Its graph $\Gamma(\bar\alpha)$ coincides with the graphoid $\GGamma(\alpha)$ of $\alpha$.

By Theorem~\ref{local}(6), for every point $z$ of the finite set $Z=\{(\infty,\infty)\}\cup\big(\bar\IR^2\setminus\dom(\alpha)\big)$ the image $\bar\alpha(z)$ is a singleton or a finite union of arcs. Consequently, the set $\Gamma(\bar\alpha|Z)=\bigcup_{z\in Z}\{z\}\times\bar\alpha(z)$ is a finite union of singletons or arcs. Using the path-connectedness of the space $Y\in\ANE(\Comp)$, we can extend the map $f_\alpha$ to a continuous map $f_\alpha':A_\alpha\cup\Gamma(\bar\alpha|Z)\to Y$.

Since $Y\in\ANE(\Comp)$, the map $f_\alpha':A_\alpha\cup\Gamma(\bar\alpha|Z)\to Y$ has a continuous extension $\tilde f_\alpha:\tilde A_\alpha\to Y$ defined on a closed neighborhood $\tilde A_\alpha$ of the set $A_\alpha\cup\Gamma(\bar\alpha|Z)$ in $\GGamma(\alpha)$. The boundary $\partial \tilde A_\alpha$ of $\tilde A_\alpha$ in $\GGamma(\alpha)$ is a compact subset of $\Gamma(\bar\alpha)\setminus\Gamma(\bar\alpha|Z)\subset\Gamma(\alpha)$.
The projection $p_\emptyset^\alpha:\Gamma(\bar\alpha)\to\bar\IR^2$ maps homeomorphically the graph $\Gamma(\alpha)$ onto the  cofinite subset $\dom(\alpha)$ of the torus $\bar \IR^2$.

Replacing $\tilde A_\alpha$ by a smaller (more regular) neighborhood, if necessary, we can assume that the boundary $\partial \tilde A_\alpha$ is a topological graph, that is, a finite union of arcs that are disjoint or meet by their end-points. Adding to $\tilde A_\alpha$ a finite union of arcs, we can enlarge the set $\tilde A_\alpha$ to a closed set $\bar A_\alpha\subset\GGamma(\alpha)$ whose boundary $\partial \bar A_\alpha$ is a topological graph such that
\begin{itemize}
\item the family $\C$ of connected components of $\GGamma(\alpha)\setminus\bar A_\alpha$ is finite and
\item for each connected component $C\in\C$ the closure $\bar C$ is homeomorphic to the closed square $\bar K=[-1,1]^2$.
\end{itemize}

Using the path-connectedness of the space $Y\in\ANE(\Comp)$, we can extend the map $\tilde f_\alpha$ to a continuous map $\bar f_\alpha:\bar A_\alpha\to Y$.

For every connected component $C\in\C$ use the density of the set $D$ in $\IR^2$ and find a point $(a_C,b_C)\in D\cap C$. Now consider the finite subfamily
$$\beta=\alpha\cup\Big\{\frac{x-a_C}{y-b_C}:C\in\C\Big\}\subset\F,$$
which determines a partial continuous function $\beta:\dom(\beta)\to\bar\IR^\beta$ defined on the cofinite set $\dom(\alpha)\setminus(\{(\infty,\infty)\}\cup\{(a_C,b_C):C\in\C\})$ of $\bar\IR^2$.

We claim that there is a continuous function $\bar f_\beta:\GGamma(\beta)\to Y$ that extends the map $f_\alpha\circ p_\alpha^\beta|A_\beta$ defined on the set $A_\beta=(p_\alpha^\beta)^{-1}(A_\alpha)$.

Put $\bar A_\beta=(p_\alpha^\beta)^{-1}(\bar A_\alpha)$ and observe that the complement $\GGamma(\beta)\setminus \bar A_\beta$ is the union of connected components $C^\beta=(p_\alpha^\beta)^{-1}(C)$, $C\in\C$, which are graphoids of the rational functions $\frac{x-a_C}{y-b_C}$ restricted to
the open 2-disks $p^\alpha_\emptyset(C)$. Such graphoids are homeomorphic to the open M\"obius band.

For every $C\in\C$ the closure $\bar C^\beta$ of $C^\beta$ in $\GGamma(\beta)$, being homeomorphic to the closed M\"obius band, is homeomorphic to the quotient space of the ``square annulus'' $\bar K\setminus \frac12K$ by the equivalence relation that identifies the pairs of opposite points on the inner boundary square $\frac12K_\partial$. Let $q_C:\bar K\setminus \frac12K\to \bar C^\beta$ be the corresponding quotient map.

Fix a continuous map $\sigma:[0,1]\to K_\partial$ such that
\begin{itemize}
\item $\sigma(0)=\sigma(1)$,
\item $\sigma\mid[0,1):[0,1)\to K_\partial$ is bijective,
\item for any $t\in[0,\frac12]$ the points $\sigma(t+\frac12)=-\sigma(t)$.
\end{itemize}
The map $\gamma_C=\bar f_\alpha\circ p_\alpha^\beta\circ q_C\circ \sigma:[0,1]\to Y$ determines a loop in $Y$, whose equivalence class is an element of the fundamental group $\pi_1(Y)$ of $Y$. Since the group $\pi_1(Y)$ is 2-divisible, there is a loop $\delta_C:[0,1]\to Y$ such whose square $\delta_C^2:[0,1]\to Y$,
$$\delta^2_C:t\mapsto\begin{cases}
\delta_C(2t)&\mbox{if $0\le t\le\frac12$,}\\
\delta_C(2t-1)&\mbox{if $\frac12\le t\le 1$,}
\end{cases}
$$is homotopic to the loop $\gamma_C$ by a homotopy that does not move the points $0$ and $1$.

Now consider the loop $\tilde \gamma_C=\bar f_\alpha\circ p_\alpha^\beta\circ q_C|K_\partial:K_\partial\to Y$ and observe that $\gamma_C=\tilde\gamma_C\circ\sigma$. Let  $\tilde\delta^2_C:\frac12K_\partial\to Y$ be a unique map such that $\tilde\delta^2_C\circ\frac12\sigma=\delta^2_C$. Here $\frac12\sigma:[0,1]\to\frac12K_\partial$ is the loop assigning to each $t\in[0,1]$ the point $\frac12\sigma(t)$ of the square $\frac12K_\partial$.
The homotopy between the loops $\gamma_C$ and $\delta^2_C$ allows us to find a continuous map $\tilde h_C:\bar K\setminus\frac12 K\to Y$ such that $\tilde h_C|K_\partial=\tilde \gamma_C$ and $\tilde h_C|\frac12K_\partial=\tilde\delta^2_C$.

The definition of $\sigma$ and $\delta^2_C$ guarantees that $\tilde \delta^2_C(x)=\tilde\delta^2_C(-x)$ for any point $x\in\frac12K_\partial$. Hence there is a unique continuous map $h_C:\bar C^\beta\to Y$ such that $\tilde h_C=h_C\circ q_C$. It follows from $\tilde h_C|K_\partial=\tilde\gamma_C$ that $h_C|\partial \bar C^\beta=\bar f_\alpha\circ p^\beta_\alpha|\partial\bar C^\beta$. This implies that the map $f_\beta:\GGamma(\beta)\to Y$,
$$f_\beta(x)=\begin{cases}
\bar f_\alpha\circ p_\alpha^\beta(x)&\mbox{if $x\in \bar A_\beta$,}\\
h_C(x)&\mbox{if $x\in\bar C^\beta$ for some $C\in\C$,}
\end{cases}
$$is a well-defined continuous extension of the map $f_\alpha\circ p_\alpha^\beta|A_\beta$.
\end{proof}

\section{Some Open Problems}

In light of Theorem~\ref{main2} the following problem arises naturally:

\begin{problem}\label{pr1} Has the graphoid $\GGamma(\F)$ of any family $\F\subset\IR(x,y)$ the cohomological dimension $\dim_G(\GGamma(\F))=2$ for any abelian group $G$ that is not 2-divisible?
\end{problem}

The answer to this problem is affirmative if the following problem has an affirmative answer.

\begin{problem}\label{pr2} Let $\F\subset\IR(x,y)$ be a finite family, $\bar\F:\bar\IR^2\setmap\bar\IR^\F$ be its graphoid extension, and $z\in\bar\IR^2$ be an arbitrary point. Is $\bar \F(z)$ a singleton or a finite union of analytic arcs in $\bar\IR^\F$.
\end{problem}

An arc $A$ in $\bar\IR^n$ is called {\em analytic} if $A=\vec\alpha([0,1])$ for some vector function $\vec\alpha:[0,1]\to\bar\IR^n$ that has analytic coordinate functions $\alpha_1,\dots,\alpha_n:[0,1]\to\bar\IR$. Here we identify the projective line $\bar\IR$ with the unit circle on plane via the stereographic projection.

In case of positive answer to Problem~\ref{pr2} the proof of the inequality $\dim(\GGamma(\F))\ge 2$ can be much simplified (Lemma 3 with its extremely long proof will be not required).


\begin{thebibliography}{}


\bibitem{ADim}  P.S.~Aleksandrov, {\em Introduction to homological dimension theory and general combinatorial topology}, Nauka, Moscow, 1975 (in Russian).

\bibitem{BKMOP} T.~Banakh, Ya.~Kholyavka, M.~Machura, O.~Potyatynyk, K.~Osiak, {\em The dimension of the space of real places of a function field}, preprint.

\bibitem{BG} E.~Becker, D.~Gondard, {\em Notes on the space of real places of a formally real field}, in: Real analytic and algebraic geometry (Trento, 1992), 21--46, de Gruyter, Berlin, 1995.

\bibitem{BriKno} E.~Brieskorn, H.~Egbert; Kn\"orrer, {\em Plane algebraic curves}, Birkhäuser Verlag, Basel, 1986.

\bibitem{Chi} A.~Chigogidze, {\em Inverse Spectra},
 North-Holland Publ., Amsterdam, 1996.

\bibitem{Dra} A.~Dranishnikov {\em Cohomological dimension theory of compact metric spaces},  Topology Atlas Invited Contributions, {\bf 6}:3 (2001), 61 pp. (arXiv:math/0501523).

\bibitem{ED} A.~Dranishnikov, J.~Dydak, {\em Extension theory of separable metrizable spaces with applications to dimension theory}, Trans. Amer. Math. Soc. {\bf 353}:1 (2001), 133--156.

\bibitem{EO} I.~Efrat, K.~Osiak, {\em Topological spaces as spaces of R-places}, J. Pure Appl. Algebra {\bf 215}:5 (2011), 839--846.

\bibitem{Eng} R.~Engelking, {\em General Topology}, Heldermann Verlag, Berlin, 1989.

\bibitem{End} R.~Engelking, {\em Theory of dimensions, finite and infinite}, Heldermann Verlag, Lemgo, 1995.

\bibitem{Hat} A.~Hatcher, {\em Algebraic Topology}, Cambridge Univ. Press, Cambridge, 2002.

\bibitem{JW2} W.~Just, M.~Weese, {\em Discovering modern set theory. II. Set-theoretic tools for every mathematician}, Amer. Math. Soc., Providence, RI, 1997.

\bibitem{Kunz} E.~Kunz, {\em Introduction to plane algebraic curves}, Birkh\"auser Boston, Inc., Boston, MA, 2005.

\bibitem{MMO} M.~Machura, M.~Marshall, K.~Osiak, {\em Metrizability of the space of R-places of a real function field}, Math. Z. {\bf 266}:1 (2010), 237--242.

\bibitem{Rud} W.~Rudin, {\em Real and Complex Analysis}, McGraw-Hill Book Co., New York, 1987.

\end{thebibliography}
\end{document}